
\documentclass[cmp,draft,numbook,envcountsect,envcountsame,envcountreset]{svjour}

\usepackage{graphics}

\usepackage{amsmath}
\usepackage{amsfonts}
\usepackage{amssymb}

\usepackage[all]{xy}
\usepackage{enumerate}
\usepackage{color}
\usepackage{verbatim}

\newcommand\bigcheck[1]{#1 \raise1ex\hbox{$\hspace{-1ex}{}^\vee$}}
\newcommand\sucheck[1]{#1 \raise0.5ex\hbox{$\hspace{-1ex}{}^\vee$}}


\newcommand{\mc}[1]{{\mathcal #1}}
\newcommand{\mf}[1]{{\mathfrak #1}}
\newcommand{\mb}[1]{{\mathbb #1}}

\newcommand\tint{{\textstyle\int}}

\renewcommand{\tilde}{\widetilde}

\newcommand{\End}{\mathop{\rm End }}
\newcommand{\Hom}{\mathop{\rm Hom }}
\newcommand{\ad}{\mathop{\rm ad }}
\newcommand{\sign}{\mathop{\rm sign }}

\newcommand{\Tor}{\mathop{\rm Tor }}

\newcommand{\Lie}{\mathop{\rm Lie }}

\renewcommand{\ker}{\mathop{\rm Ker }}
\newcommand{\im}{\mathop{\rm Im }}

\newcommand{\Vect}{\mathop{\rm Vect }}

\definecolor{light}{gray}{.9}

\begin{document}


\title{{Calculus structure on the Lie conformal algebra complex and the variational complex}}
\titlerunning{Calculus structure}

\author{Alberto De Sole\inst{1} \and Pedram Hekmati\inst{2} \and Victor G. Kac\inst{3}}
\institute{
Dipartimento di Matematica, Universit\`a di Roma ``La Sapienza'',
00185 Roma, Italy \\
\email{desole@mat.uniroma1.it}
\and
School of Mathematical Sciences,
University of Adelaide, Adelaide, SA 5005, Australia \\
\email{pedram.hekmati@adelaide.edu.au}
\and
Department of Mathematics, MIT,
77 Massachusetts Avenue, Cambridge, MA 02139, USA \\
\email{kac@math.mit.edu}
}

\authorrunning{A. De Sole, P. Hekmati, V. Kac}


\maketitle


\begin{abstract}
\noindent 
We construct a calculus structure on the Lie conformal algebra cochain complex.
By restricting to degree one chains, we recover the structure of a $\mf{g}$-complex introduced 
in \cite{DSK}. A special case of this construction is the variational calculus, for which we 
provide explicit formulas.
\end{abstract}


\section{Introduction}
\label{sec:intro}

A Lie conformal algebra over a field $\mb{F}$, is an $\mb{F}[\partial]$-module $R$ endowed with 
a bilinear map $[\cdot\,_\lambda\,\cdot]$ with values in $R[\lambda]$,
called the \emph{$\lambda$-bracket}, satisfying certain sesquilinearity, skewcommutativity 
and Jacobi identity. 
In practice, $\lambda$-brackets arise as generating functions for the singular part 
of the operator product expansion in conformal field theory \cite{K}. 
More recently, their domain of applicability has been further extended to encode 
local Poisson brackets in the theory of integrable evolution equations \cite{BDSK}.

Lie conformal algebras resemble Lie algebras in many ways and in particular their cohomology theory
with coefficients in an $R$-module $M$ was developed in \cite{BKV}, \cite{BDAK}.

In \cite{DSK}, it was further shown that when the $R$-module $M$ is endowed 
with a commutative associative product, on which $\partial$ and $R$ act as derivations, 
the Lie conformal algebra cochain complex $(C^\bullet(R,M),d)$ carries a structure of a $\mf{g}$-complex,
where $\mf g$ is the Lie algebra of Lie conformal algebra 1-chains. 
Namely, for each $X\in\mf g$ there exists a contraction operator $\iota_X$ 
and a Lie derivative $L_X$ on $C^\bullet(R,M)$ satisfying the usual rules of Cartan calculus. 

Moreover, it was shown in \cite{DSK} that in the special case of the Lie conformal algebra 
$R=\bigoplus_{i\in I}\mb{F}[\partial]u_i$ with zero $\lambda$-bracket,
acting on an algebra of differential functions $\mc V$ by 
\begin{equation}\label{0329:eq1}
u_{i\lambda}f = \sum_{k\in \mb{Z}_+} \lambda^k \frac{\partial f}{\partial u_i^{(k)}}
\,\,,\,\,\,\,
f\in \mathcal V \,,
\end{equation}
the cochain complex $(C^\bullet(R,\mc V),d)$
is identified with the variational complex, introduced in \cite{GD},
the Lie algebra of 1-chains for the $R$-module $\mc V$
is identified with the Lie algebra of evolutionary vector fields,
and the Cartan calculus turns into the variational calculus.

Our aim in this paper is to extend the structure of a $\mf{g}$-complex on $C^\bullet(R,M)$
to the much richer structure of a calculus structure. 
The notion of a calculus structure originated in Hochschild cohomology theory \cite{DTT}
(in fact, the definition in \cite{DTT} differs from ours by some signs).
It is defined as a representation $(\iota_\cdot,L_\cdot)$ of a Gerstenhaber 
(=odd Poisson) algebra $\mc G$ on a complex $(\Omega,d)$,
such that the usual Cartan's formula holds
\begin{equation}\label{compatibility3}
L_X = [\iota_X,d]\,.
\end{equation}
Here $\iota_\cdot$ (respectively $L_\cdot$) is a representation of $\mc G$
(resp. of $\mc G$ with reversed parity) viewed as an associative (resp. Lie) superalgebra.

The motivating example of a calculus structure comes from differential geometry. 
Namely, let $\mc M$ be a smooth manifold. The space of polyvector fields $\Omega_\bullet(\mathcal M)$, 
is a Gerstenhaber algebra, with the associative product given by the exterior multiplication $\wedge$,
and the bracket given by the Schouten bracket.
Then the representation of $\Omega_\bullet(\mc M)$ on the space $\Omega^\bullet(\mathcal M)$ 
of differential forms, together with the de Rham differential $d$,
is given by the contraction operator
\begin{equation}
(\iota_X\omega)(Y) = (-1)^{\frac{p(X)(p(X)-1)}{2}}\omega(X\wedge Y)
\,\,,\,\,\,\,
X,Y\in \Omega_\bullet(\mathcal M),\,\omega\in \Omega^\bullet(\mc M)
\end{equation}
and the Lie derivative $L_X$
is given by Cartan's formula \eqref{compatibility3}.

In Section \ref{sec:2}, apart from the basic definitions, we introduce the notion
of a rigged representation of a Lie algebroid $(\mf g,A)$,
which allows one to extend a structure of a $(\mf g,A)$-complex
to a calculus structure (Theorem \ref{0217:th} and \ref{0228:th}).

In Section \ref{sec:3}, for any Lie algebra $\mf g$ and a $\mf g$-module $A$,
where $A$ is a commutative associative algebra on which $\mf g$ acts by derivations,
we construct a calculus structure $(\Delta_\bullet(\mf g,A),\Delta^\bullet(\mf g,A))$,
where $\Delta_\bullet(\mf g,A)$ is the space of Lie algebra chains
endowed with a structure of a Gerstenhaber algebra,
and $(\Delta^\bullet(\mf g,A),d)$ is the complex of Lie algebra cochain
(Theorem \ref{0302:th}).
Keeping in mind the annihilation Lie algebra of a Lie conformal algebra,
we construct a ``topological'' calculus structure in the case when $\mf g$
is a linearly compact Lie algebra.

In Section \ref{sec:4} we introduce a Gerstenhaber algebra structure on the space
of Lie conformal algebra chains $C_\bullet(R,M)$
for an arbitrary module $M$ with a commutative associative algebra structure
over a Lie conformal algebra $R$, acting on $M$ by derivations.
This extends the Lie algebra structure on the space of 1-chains with reversed parity,
$\mf g=\Pi C_1(R,M)$, defined in \cite[Theorem 4.8]{DSK}.
This allows us to extend the $\mf g$-structure on the complex of Lie conformal algebra cochains
$C^\bullet(R,M)$ constructed in \cite{DSK} to a calculus structure (Theorem \ref{0319:th}).
Furthermore, we construct a morphism from the topological calculus structure for 
the (linearly compact) annihilation Lie algebra $\Lie_-R$ of a finite Lie conformal algebra $R$
to the calculus structure $(C_\bullet(R,M),C^\bullet(R,M))$
which induces an isomorphism of the reduced by $\partial$ former calculus structure
to the torsionless part of the latter calculus structure (Theorem \ref{0322:th}),
extending that in \cite{DSK} for $\mf g$-structures.
This is used in Section \ref{sec:5} to identify the variational complex $\Omega^\bullet(\mc V)$
over an algebra of differential functions $\mc V$ on $\ell$ differential variables,
with the complex $C^\bullet(R,\mc V)$,
where $R$ is the free $\mb F[\partial]$-module of rank $\ell$ with zero $\lambda$-bracket,
acting on $\mc V$ via \eqref{0329:eq1},
and to extend the identification of $\mf g$-structures obtained in \cite{DSK},
to an explicit construction of the variational calculus structure
$(\Omega_\bullet(\mc V),\Omega^\bullet(\mc V))$,
where $\Omega_\bullet(\mc V)$ is the Gerstenhaber algebra of all evolutionary polyvector fields over $\mc V$.

Throughout the paper
all vector spaces are considered over a field $\mb F$ of characteristics zero.
Unless otherwise specified, direct sums and tensor products are considered over $\mb F$.


\section{Calculus structure on a complex}
\label{sec:2}

In this section we introduce the basic definitions of a Gerstenhaber algebra and of a calculus structure,
and prove some simple related results that will be used throughout the paper. 

\subsection{Rigged representations of Lie superalgebras}

Recall that a vector superspace is a $\mb Z/2\mb Z$-graded vector space $V=V_{\bar 0}\oplus V_{\bar 1}$.
If $a\in V_\alpha$, where $\alpha\in\mb Z/2\mb Z=\{\bar0,\bar1\}$, one says that $a$ has parity $p(a)=\alpha$.
One denotes by $\Pi V$ the superspace obtained from $V$ by reversing the parity, namely $\Pi V=V$ as a vector space,
with parity $\bar p(a)=p(a)+\bar1$.
An endomorphism of $V$ is called even (resp. odd) if it preserves (resp. reverses) the parity.
The superspace $\End(V)$ of all endomorphisms of $V$ is endowed with a Lie superalgebra structure by the formula:
$[A,B]=A\circ B-(-1)^{p(A)p(B)}B\circ A$.

\begin{definition}\label{def:rigged}
A representation of a Lie superalgebra $\mf g$ on a vector superspace $V$, $X\mapsto L_X\in\End(V)$, 
is called \emph{rigged} if it is endowed with an even linear map
$\iota_{\cdot}:\,\Pi\mf g\to\End(V)$ (i.e. a parity reversing map $\mf g\to\End(V)$), denoted $X\mapsto\iota_X$, such that:
\begin{enumerate}[(i)]
\item $[\iota_X,\iota_Y]=0$ for all $X,Y\in\mf g$,
\item $[L_X,\iota_Y]=\iota_{{[X,Y]}}$ for all $X,Y\in\mf g$.
\end{enumerate}
\end{definition}

Throughout the paper we will denote the parity of the Lie superalgebra $\mf g$ by $\bar p$. Hence, for the linear map 
$\iota_\cdot:\,\Pi\mf g\to\End(V)$, we have $p(\iota_X)=\bar p(X)+\bar1$.

Recall that a \emph{complex} $(\Omega,d)$ is a vector superspace $\Omega$, endowed with an odd endomorphism $d\in\End(\Omega)$ 
such that $d^2=0$.
A representation of a Lie superalgebra $\mf g$ on a complex $(\Omega,d)$ is a representation of $\mf g$ on the superspace $\Omega$,
denoted $X\mapsto L_X\in\End(\Omega)$, such that $[L_X,d]=0$.

Recall also (see e.g. \cite{DSK}) that a $\mf g$-\emph{complex} is a pair $(\mf g,\Omega)$, where $\mf g$ is a Lie superalgebra,
$(\Omega,d)$ is a complex, endowed with a linear map $\iota_\cdot:\,\Pi\mf g\to\End(\Omega)$,
satisfying the following conditions:
\begin{enumerate}[(i)]
\item[$(i)$] $[\iota_X,\iota_Y]=0$ for all $X,Y\in\mf g$,
\item[$(ii)$] $[[\iota_X,d],\iota_Y]=\iota_{[X,Y]}$ for all $X\in\mf g$.
\end{enumerate}
This is also called a $\mf g$-\emph{structure} on the complex $(\Omega,d)$.
\begin{lemma}\label{0220:lem1}
Any $\mf g$-complex $(\mf g,\Omega)$ gives rise to a rigged representation of the Lie superalgebra $\mf g$
on the complex $(\Omega,d)$, obtained by defining the map $L_\cdot:\,\mf g\to\End(\Omega)$ by \emph{Cartan's formula}:
\begin{equation} \label{comp3} 
L_X 
= [\iota_X,d]\,.
\end{equation}
\end{lemma}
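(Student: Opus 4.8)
The plan is to verify directly that the operators $L_X := [\iota_X, d]$, together with the given map $\iota_\cdot$, satisfy all the axioms of a rigged representation (Definition \ref{def:rigged}). I first observe that two of these axioms require no work: condition (i), $[\iota_X,\iota_Y]=0$, is identical to condition (i) defining a $\mf g$-complex, and condition (ii), $[L_X,\iota_Y]=\iota_{[X,Y]}$, is—after substituting $L_X=[\iota_X,d]$—exactly the $\mf g$-complex condition $[[\iota_X,d],\iota_Y]=\iota_{[X,Y]}$. Hence the entire content of the lemma is that $L_\cdot$ is a genuine representation of the Lie superalgebra $\mf g$ on the complex $(\Omega,d)$, which amounts to three checks: that $L_\cdot$ is even, that $[L_X,d]=0$, and that $L_{[X,Y]}=[L_X,L_Y]$.

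First I would record the parities. Since $\iota_\cdot:\Pi\mf g\to\End(\Omega)$ is even, we have $p(\iota_X)=\bar p(X)+\bar1$, and as $p(d)=\bar1$ it follows that $p(L_X)=p(\iota_X)+p(d)=\bar p(X)$; thus $L_\cdot$ is an even linear map $\mf g\to\End(\Omega)$, as required of a representation. Next I would prove $[L_X,d]=0$, which is the step where the hypothesis $d^2=0$ is used. The idea is that $[\,\cdot\,,d]$ squares to zero: applying the graded Jacobi identity to $\iota_X,d,d$ and using $[d,d]=2d^2=0$ gives $0=[\iota_X,[d,d]]=[[\iota_X,d],d]+(-1)^{p(\iota_X)}[d,[\iota_X,d]]$, and rewriting $[d,[\iota_X,d]]=-(-1)^{p(L_X)}[[\iota_X,d],d]$ together with $p(\iota_X)+p(L_X)=\bar1$ forces $2[L_X,d]=0$, hence $[L_X,d]=0$ since $\mathrm{char}\,\mb F=0$. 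This simultaneously shows that $L_\cdot$ is a representation \emph{on the complex}, i.e. that it commutes with $d$.

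The key step, and the only place where the $\mf g$-complex axioms beyond (i) are invoked, is the homomorphism property $[L_X,L_Y]=L_{[X,Y]}$. I would expand $[L_X,L_Y]=[L_X,[\iota_Y,d]]$ by the graded Jacobi identity to get $[L_X,L_Y]=[[L_X,\iota_Y],d]+(-1)^{p(L_X)p(\iota_Y)}[\iota_Y,[L_X,d]]$. The vanishing $[L_X,d]=0$ just established annihilates the second term, while condition (ii), in the form $[L_X,\iota_Y]=[[\iota_X,d],\iota_Y]=\iota_{[X,Y]}$, turns the first term into $[\iota_{[X,Y]},d]=L_{[X,Y]}$. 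This yields $[L_X,L_Y]=L_{[X,Y]}$ and completes the verification.

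I expect the only genuine obstacle to be bookkeeping: tracking the super-signs in the Jacobi identity and in the graded commutators correctly. The conceptual core is the single observation that once $[L_X,d]=0$ is in hand, the Jacobi expansion of $[L_X,L_Y]$ collapses to one term, which is governed directly by axiom (ii); everything else is a parity count.
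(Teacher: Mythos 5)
Your proposal is correct and follows essentially the same route as the paper's own proof: identifying the rigging axioms with the $\mf g$-complex axioms via Cartan's formula, deducing $[L_X,d]=0$ from $d^2=0$, and obtaining $[L_X,L_Y]=L_{[X,Y]}$ by expanding $[L_X,[\iota_Y,d]]$ with the graded Jacobi identity so that condition (ii) handles the surviving term. The only difference is that you spell out the parity count and the sign bookkeeping that the paper leaves implicit.
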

\begin{proof}
Indeed, conditions $(i)$ and $(ii)$ in Definition \ref{def:rigged} coincide, via Cartan's formula, with conditions $(i)$ and $(ii)$ above.
Moreover, since $d^2=\frac12[d,d]=0$, we immediately get that $[L_X,d]=0$ for every $X\in\mf g$.
Finally we have, by Cartan's formula \eqref{comp3} and condition $(ii)$,
$$
[L_X,L_Y]
= [L_X,[\iota_Y,d]]
= [[L_X,\iota_Y],d] \pm [\iota_Y,[L_X,d]] 
= [\iota_{[X,Y]},d]
= L_{[X,Y]}\,.
$$
\flushright\qed
\end{proof}

\subsection{Rigged representations of Lie algebroids}

Recall that a \emph{Lie superalgebroid} is a pair $(\mf g,A)$,
where $\mf g$ is a Lie superalgebra,
$A$ is a commutative associative algebra,
such that $\mf g$ is a left $A$-module and $A$ is a left $\mf g$-module,
satisfying the following compatibility conditions ($X,Y\in \mf g,\,f,g\in A$):
\begin{enumerate}[(i)]
\item[$(i)$]  $(fX)(g)=f(X(g))$,
\item[$(ii)$]  $X(fg)=X(f)g+fX(g)$,
\item[$(iii)$]  $[X,fY]=X(f)Y+f[X,Y]$.
\end{enumerate}
\begin{remark}
Since we assume $A$ to be purely even, the odd part of $\mf g$ necessarily acts trivially on $A$.
One can consider also $A$ to be a commutative associative superalgebra,
but then the signs in the formulas become more complicated.
\end{remark}

\begin{example}\label{0220:ex1}
If $A$ is a commutative associative algebra and $\mf g$ is a subalgebra of the Lie algebra of derivations of $A$
such that $A\mf g\subset\mf g$, then, obviously, $(\mf g,A)$ is a Lie algebroid.
\end{example}

\begin{example}\label{0219:ex1}
If $\mf g$ is a Lie superalgebra with parity $\bar p$, acting by derivations 
on a commutative associative superalgebra $A$,
then $(A\otimes\mf g,A)$ is a Lie algebroid with Lie bracket
$$
[f\otimes X,g\otimes Y]=fg\otimes[X,Y]+fX(g)\otimes Y-(-1)^{\bar p(X)\bar p(Y)}gY(f)\otimes X\,.
$$
\end{example}

\begin{example}\label{0219:ex2}
Given a Lie superalgebroid $(\mf g,A)$, we can construct two Lie superalgebras:
$\mf g\ltimes A$ and $\mf g\ltimes\Pi A$, with Lie bracket
which extends that on $\mf g$ by letting, for $f,g\in A$ and $X\in\mf g$,
$[f,g]=0$, $[X,f]=X(f)$ and $[f,X]$ given by skewcommutativity.
Both these Lie superalgebras give rise to Lie superalgebroids in the obvious way.
\end{example}

\begin{definition}\label{0220:def2}
\begin{enumerate}[(a)]
\item 
A \emph{representation} of a Lie superalgebroid $(\mf g,A)$ on a vector superspace $V$
is a left $A$-module structure on $V$, denoted $f\mapsto\iota_f$, 
together with a left $\mf g$-module structure on $V$, denoted $X\mapsto L_X$,
such that, for $X\in\mf g,\,f\in A$, we have $[L_X,\iota_f]=\iota_{X(f)}$.
\item
Given $\epsilon\in\mb F$, 
an $\epsilon$-\emph{rigged representation} of a Lie superalgebroid $(\mf g,A)$ 
(with parity of $\mf g$ denoted by $\bar p$) on a vector superspace $V$ is 
a left $A$-module structure on $V$, $\iota_\cdot:\,A\to\End(V)$,
together with a rigged representation of the Lie superalgebra $\mf g\ltimes\Pi A$,
defined by the linear maps
$\iota_\cdot:\,\Pi\mf g\oplus A\to\End(V),\,L_\cdot:\,\mf g\oplus\Pi A\to\End(V)$,
satisfying the following compatibility conditions ($f,g\in A,\,X\in\mf g$):
\begin{enumerate}[(i)]
\item $L_{fg}=L_f\iota_g+\iota_f L_g$,
\item $\iota_{fX}=\iota_f\iota_X$,
\item $L_{fX}=\iota_fL_X-(-1)^{\bar p(X)}L_f\iota_X
-\epsilon\iota_{X(f)}$.
\end{enumerate}
\end{enumerate}
\end{definition}

\begin{remark}
If $(\mf g,A)$ is a Lie superalgebroid, then both $A$ and $\mf g$ are $(\mf g,A)$-modules (but in general they are not rigged).
On the other hand, as we will see in Proposition \ref{schouten}, they extend to a $1$-rigged representation
of the Lie superalgebroid $(\mf g,A)$ on the vector superspace $S_A(\Pi\mf g)$.
However, for the applications to calculus structure, the most important role will be played by the $0$-rigged representations.
Indeed, as we will see in Proposition \ref{0220:prop1} below, any $(\mf g,A)$-complex gives rise to a $0$-rigged representation
of the Lie superalgebroid $(\mf g,A)$.
\end{remark}

\begin{definition}\label{0220:def1}
\begin{enumerate}[(a)]
\item 
A \emph{representation} of a Lie superalgebroid $(\mf g,A)$ \emph{on a complex} $(\Omega,d)$
is a representation of $(\mf g,A)$ on the vector superspace $\Omega$ such that $[L_X,d]=0$ for every $X\in\mf g$.
\item
A $(\mf g,A)$-\emph{complex} $(\Omega,d)$, where $(\mf g,A)$ is a Lie superalgebroid,
is a $\mf g\ltimes\Pi A$-complex (for the Lie superalgebra $\mf g\ltimes\Pi A$ in Example \ref{0219:ex2})
such that the linear map $\iota_\cdot:\,\Pi\mf g\oplus A\to\End(\Omega)$ satisfies the following two additional 
conditions (for $f,g\in A,\,X\in\mf g$):
\begin{enumerate}[(i)]
\item $\iota_{fg}=\iota_f\iota_g$,
\item $\iota_{fX}=\iota_f\iota_X$.
\end{enumerate}
\end{enumerate}
\end{definition}

The following result allows us to extend a $\mf g$-complex to an $(A\otimes\mf g,A)$-complex.
\begin{lemma}\label{0302:lem}
Let $A$ be a commutative associative algebra and let $\mf g$ be a Lie superalgebra,
with parity $\bar p$, acting on $A$ by derivations, 
so that we have the corresponding Lie superalgebroid $(A\otimes\mf g,A)$
from Example \ref{0219:ex1}.
Let $(\Omega,d)$ be a complex 
endowed with a structure of a $\mf g$-complex, $\iota_\cdot:\,\Pi\mf g\to\End(\Omega)$,
and with a structure of a left $A$-module, denoted by $\iota_\cdot:\,A\to\End(\Omega)$.
Define the map $L_\cdot:\,\mf g\oplus\Pi A\to\End(\Omega)$
by Cartan's formula: $L_a=[\iota_a,d],\,a\in\mf g\oplus\Pi A$.
Assume that the following conditions hold:
\begin{enumerate}[(i)]
\item $[\iota_X,\iota_f]=0$ for all $f\in A,\,X\in\mf g$,
\item $[L_f,\iota_g]=0$, for all $f,g\in A$,
\item $[L_X,\iota_f]=\iota_{X(f)}$, for all $f\in A,\,X\in\mf g$.
\end{enumerate}
Then, we have a structure of an $(A\otimes\mf g,A)$-complex on $(\Omega,d)$
by letting $\iota_{f\otimes X}=\iota_f\iota_X$, for $f\in A$ and $X\in\mf g$.
\end{lemma}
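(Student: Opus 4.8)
The plan is to unwind Definition~\ref{0220:def1}(b) for the Lie superalgebroid $(A\otimes\mf g,A)$ of Example~\ref{0219:ex1} and to verify each required identity directly. Writing $\mf g'=A\otimes\mf g$, I must check: (a) the multiplicativity conditions $\iota_{fg}=\iota_f\iota_g$ and $\iota_{fZ}=\iota_f\iota_Z$ (for $f,g\in A$ and $Z\in\mf g'$); and (b) that $(\Omega,d)$ is a $\mf g'\ltimes\Pi A$-complex, i.e.\ that $[\iota_U,\iota_V]=0$ and $[[\iota_U,d],\iota_V]=\iota_{[U,V]}$ hold for all $U,V\in\mf g'\ltimes\Pi A=(A\otimes\mf g)\oplus\Pi A$. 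By bilinearity it suffices to test these on the generators $f\in A$ and $f\otimes X\in A\otimes\mf g$, so every verification reduces to the building-block operators $\iota_f$, $\iota_X$, $L_f=[\iota_f,d]$, $L_X=[\iota_X,d]$ and $L_{f\otimes X}=[\iota_f\iota_X,d]$.

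The multiplicativity conditions (a) are immediate: $\iota_{fg}=\iota_f\iota_g$ is the statement that $\iota_\cdot:\,A\to\End(\Omega)$ is an algebra action, and $\iota_{f(g\otimes X)}=\iota_{(fg)\otimes X}=\iota_{fg}\iota_X=\iota_f\iota_g\iota_X=\iota_f\iota_{g\otimes X}$ follows from this and the definition. For the commutation condition $[\iota_U,\iota_V]=0$ I would first observe that the family $\{\iota_f,\iota_X:\,f\in A,\,X\in\mf g\}$ super-commutes: indeed $[\iota_f,\iota_g]=\iota_{fg}-\iota_{gf}=0$ as $A$ is commutative, $[\iota_X,\iota_f]=0$ is hypothesis (i), and $[\iota_X,\iota_Y]=0$ is condition (i) of the $\mf g$-complex. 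Applying the super-Leibniz rule $[AB,C]=A[B,C]+(-1)^{p(B)p(C)}[A,C]B$ then kills $[\iota_f\iota_X,\iota_g\iota_Y]$ and the mixed brackets term by term. Before turning to the Cartan-type identity I record two auxiliary formulas that will carry the argument: from Lemma~\ref{0220:lem1} and condition (ii) of the $\mf g$-complex,
$$
[L_X,\iota_Y]=\iota_{[X,Y]}\,,
$$
while the super Jacobi identity together with hypotheses (i) and (iii) gives
$$
[L_f,\iota_X]=[[\iota_f,d],\iota_X]=-(-1)^{\bar p(X)}\iota_{X(f)}\,.
$$

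It remains to verify the Cartan-type identity $[L_U,\iota_V]=\iota_{[U,V]}$ on generators, where the bracket in $\mf g'\ltimes\Pi A$ is $[Z,f]=Z(f)$ for $Z\in\mf g'$, $f\in A$ (and its skew counterpart), and $[f\otimes X,g\otimes Y]$ is the algebroid bracket of Example~\ref{0219:ex1}. The mixed cases are short: $[L_f,\iota_g]=0$ is hypothesis (ii) and matches $\iota_{[f,g]}=0$; the cases $[L_f,\iota_{g\otimes X}]$ and $[L_{g\otimes X},\iota_f]$ follow from hypotheses (i)--(iii) and the two auxiliary formulas, reproducing respectively $\iota_{[f,g\otimes X]}=-(-1)^{\bar p(X)}\iota_g\iota_{X(f)}$ and $\iota_{[g\otimes X,f]}=\iota_g\iota_{X(f)}$. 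The substantial case is $U=f\otimes X$, $V=g\otimes Y$. Here I expand $L_{f\otimes X}=[\iota_f\iota_X,d]=\iota_f L_X-(-1)^{\bar p(X)}L_f\iota_X$ by super-Leibniz, apply $[\,\cdot\,,\iota_g\iota_Y]$ again by super-Leibniz, and collapse each resulting commutator using the identities $[\iota_f,\cdot]=0$ (hypothesis (i) and commutativity), $[L_X,\iota_g]=\iota_{X(g)}$ (hypothesis (iii)), $[L_X,\iota_Y]=\iota_{[X,Y]}$, and $[L_f,\iota_Y]=-(-1)^{\bar p(Y)}\iota_{Y(f)}$. After cancellation this yields exactly
$$
\iota_f\iota_g\iota_{[X,Y]}+\iota_f\iota_{X(g)}\iota_Y-(-1)^{\bar p(X)\bar p(Y)}\iota_g\iota_{Y(f)}\iota_X=\iota_{[f\otimes X,g\otimes Y]}\,.
$$

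I expect the only real obstacle to be the Koszul sign bookkeeping in this last case: one must track the parities $p(\iota_X)=\bar p(X)+\bar1$, $p(L_X)=\bar p(X)$ and $p(L_f)=\bar1$ through two nested applications of the super-Leibniz rule, and check that the three surviving sign factors coincide precisely with those in the algebroid bracket of Example~\ref{0219:ex1}. A useful consistency check is that, since $A$ is purely even, the odd part of $\mf g$ acts as zero on $A$, so that terms such as $\iota_{X(f)}$ automatically vanish for odd $X$ and create no inconsistency in the signs.
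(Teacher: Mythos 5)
Your proposal is correct and follows essentially the same route as the paper's proof: reduce to the four defining relations of an $(A\otimes\mf g,A)$-complex, dispose of the contraction relations immediately, and verify the Cartan-type identity case by case using precisely the two identities $[L_f,\iota_X]=-(-1)^{\bar p(X)}\iota_{X(f)}$ and $L_{f\otimes X}=\iota_fL_X-(-1)^{\bar p(X)}L_f\iota_X$. The only difference is that you carry out explicitly (with correct signs) the computations the paper dismisses as ``straightforward to check.''
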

\begin{proof}
By definition of a complex over the Lie superalgebroid $(A\otimes\mf g,A)$, 
we need to prove that the following relations hold:
\begin{enumerate}[(1)]
\item $[\iota_a,\iota_b]=0$, for $a,b\in(A\otimes\Pi\mf g)\oplus A$,
\item $[L_a,\iota_b]=\iota_{[a,b]}$, for $a,b\in(A\otimes\mf g)\ltimes\Pi A$,
\item $\iota_{fg}=\iota_f\iota_g$, for $f,g\in A$,
\item $\iota_{f(g\otimes X)}=\iota_f\iota_{g\otimes X}$, for $f,g\in A,\,X\in\Pi\mf g$,
\end{enumerate}
where $L_a$, as before, is defined by Cartan's formula for $a\in (A\otimes\mf g)\oplus\Pi A$.
Relation (1) is immediate by the definition of $\iota_{f\otimes X}$ and assumption $(i)$.
Relation (3) holds by the assumption that $\iota:\,A\to\End(\Omega)$ defines a structure 
of a left $A$-module.
Relation (4) is also immediate.
We are left to prove relation (2).
When $a,b\in\Pi A$, it holds by assumption $(ii)$.
When $a=f\in\Pi A,\,b=g\otimes X\in A\otimes\mf g$, it follows 
by a straightforward computation using the following identity,
$$
[L_f,\iota_X]=-(-1)^{\bar p(X)}\iota_{X(f)}\,,
$$
which can be easily checked.
Finally, when $a=f\otimes X\in A\otimes\mf g$ and $b=g\in\Pi A$ or $b=g\otimes Y\in A\otimes\mf g$, 
relation (2) follows using the identity,
$$
L_{f\otimes X}=\iota_fL_X-(-1)^{\bar p(X)}L_f\iota_X\,,
$$
which is again straightforward to check.
\flushright\qed
\end{proof}

The following result generalizes Lemma \ref{0220:lem1} to the case of Lie superalgebroids.
\begin{proposition}\label{0220:prop1}
Any $(\mf g,A)$-complex $(\Omega,d)$ gives rise to a $0$-rigged representation 
of the Lie superalgebroid $(\mf g,A)$ 
on the vector superspace $\Omega$, 
obtained by defining the map $L_\cdot:\,\mf g\ltimes\Pi A\to\End(\Omega)$ by Cartan's formula: 
$L_a = [\iota_a,d]$ for all $a\in\mf g\ltimes\Pi A$.
\end{proposition}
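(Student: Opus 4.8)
The plan is to deduce the statement from Lemma \ref{0220:lem1}, supplemented by a short direct check of the three compatibility conditions $(i)$--$(iii)$ of Definition \ref{0220:def2}(b) in the case $\epsilon=0$. First I would note that a $(\mf g,A)$-complex is, by Definition \ref{0220:def1}(b), in particular a $\mf g\ltimes\Pi A$-complex. Hence Lemma \ref{0220:lem1}, applied to the Lie superalgebra $\mf g\ltimes\Pi A$, already produces a rigged representation of $\mf g\ltimes\Pi A$ on $(\Omega,d)$ with $L_a=[\iota_a,d]$; this yields at once that $L_\cdot$ is a representation of $\mf g\ltimes\Pi A$, that $[\iota_a,\iota_b]=0$, and that $[L_a,\iota_b]=\iota_{[a,b]}$ for all $a,b\in\mf g\ltimes\Pi A$. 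The left $A$-module structure required in Definition \ref{0220:def2}(b) is the given map $\iota_\cdot:\,A\to\End(\Omega)$, which is an algebra homomorphism by condition $(i)$ of Definition \ref{0220:def1}(b). Thus everything reduces to verifying conditions $(i)$--$(iii)$ of Definition \ref{0220:def2}(b) with $\epsilon=0$.

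Condition $(ii)$, namely $\iota_{fX}=\iota_f\iota_X$, is literally condition $(ii)$ of Definition \ref{0220:def1}(b), so there is nothing to prove. For condition $(i)$ I would start from Cartan's formula and the identity $\iota_{fg}=\iota_f\iota_g$ to write $L_{fg}=[\iota_f\iota_g,d]$, then expand using the graded Leibniz rule $[AB,d]=A[B,d]+(-1)^{p(B)}[A,d]B$ (recall $p(d)=\bar1$). Since $A$ is purely even, $\iota_f$ and $\iota_g$ are even, so no sign arises and one gets $L_{fg}=\iota_f[\iota_g,d]+[\iota_f,d]\iota_g=L_f\iota_g+\iota_f L_g$, which is condition $(i)$. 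For condition $(iii)$ I would similarly write $L_{fX}=[\iota_f\iota_X,d]$ using $\iota_{fX}=\iota_f\iota_X$ and expand by the same rule; here $p(\iota_X)=\bar p(X)+\bar1$, so the Leibniz sign is $(-1)^{p(\iota_X)}=-(-1)^{\bar p(X)}$, yielding $L_{fX}=\iota_f L_X-(-1)^{\bar p(X)}L_f\iota_X$. This is exactly condition $(iii)$ with $\epsilon=0$.

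I do not expect any deep obstacle: the argument is a matter of unwinding definitions and keeping the signs straight. The only point requiring attention is the parity bookkeeping, i.e. recording $p(\iota_f)=\bar0$ and $p(\iota_X)=\bar p(X)+\bar1$ and applying the graded Leibniz identity for $[\,\cdot\,,d]$ in the correct order. It is worth emphasizing that the expansion of $L_{fX}$ produces no extra $\iota_{X(f)}$ term, and this is precisely what pins down the value $\epsilon=0$, explaining why a general $(\mf g,A)$-complex gives rise to a $0$-rigged (and not, say, a $1$-rigged) representation.
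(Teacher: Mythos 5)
Your proposal is correct and follows essentially the same route as the paper: invoke Lemma \ref{0220:lem1} for the Lie superalgebra $\mf g\ltimes\Pi A$, observe that the $A$-module structure and condition $(ii)$ come straight from Definition \ref{0220:def1}(b), and verify conditions $(i)$ and $(iii)$ of Definition \ref{0220:def2}(b) via Cartan's formula. The only difference is that the paper dismisses those last two checks as immediate, whereas you carry out the graded Leibniz expansion of $[\iota_f\iota_g,d]$ and $[\iota_f\iota_X,d]$ explicitly (with the correct signs), which also makes transparent why the result is $0$-rigged rather than $\epsilon$-rigged for some other $\epsilon$.
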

\begin{proof}
Condition $(i)$ in Definition \ref{0220:def1}(b) guarantees that $\Omega$ is a left $A$-module.
By Lemma \ref{0220:lem1} we know that the map $L_\cdot:\,\mf g\ltimes\Pi A\to\End(V)$, given by Cartan's formula, 
is a Lie superalgebra homomorphism.
Moreover, condition $(ii)$ in Definition \ref{0220:def2}(b) coincides with condition $(ii)$ in Definition \ref{0220:def1}(b).
Hence, to conclude the proof, we are left to check that $\iota_\cdot$ and $L_\cdot$ satisfy the compatibility conditions
$(i)$ and $(iii)$ in Definition \ref{0220:def2}(b). Both of them follow immediately by Cartan's formula.
\flushright\qed
\end{proof}

\begin{example}\label{0220:ex2}
Let $A$ be the algebra of smooth functions on a smooth manifold $\mathcal M$, $\mf g$ be the Lie algebra of smooth vector fields on $\mathcal M$, 
and $\Omega$ be the complex of smooth differential forms on $\mathcal M$ with the de Rham differential $d$.
Then $(\mf g,A)$ is a Lie algebroid. 
Moreover, the map $\iota_\cdot:\,\Pi\mf g\oplus A\to\End(V)$, 
where $\iota_f$ is the multiplication by $f\in A$, 
and $\iota_X$ is the contraction operator by the vector field $X\in\mf g$,
defines a structure of a $(\mf g,A)$-complex on $\Omega$.
Hence, by Proposition \ref{0220:prop1}, we get a $0$-rigged representation of the Lie algebroid $(\mf g,A)$ on the complex $\Omega$,
where $L_f$ is the multiplication by $-df$ in the algebra $\Omega$, for $f\in A$,
and $L_X$ is the Lie derivative by the vector field $X\in\mf g$.
\end{example}

\subsection{Gerstenhaber (= odd Poisson) algebras}

Recall that, given a commutative associative algebra $A$, and an $A$-module structure on a vector superspace $V$,
the symmetric, (respectively exterior) superalgebra $S_A(V)$ (resp. $\bigwedge_A(V)$)
is defined as the quotient of the tensor superalgebra $\mc T_A(V)$ by the relations
$u\otimes_A v-(-1)^{p(u)p(v)}v\otimes_A u$ (resp. $u\otimes_A v+(-1)^{p(u)p(v)}v\otimes_A u$).
Note that $S_A(\Pi V)$ is the same as $\bigwedge_A V$ as an $A$-module (but not as a vector superspace).

\begin{definition}\label{def:gerst}
A \emph{Gerstenhaber algebra} (also known as an \emph{odd Poisson algebra}) 
is a vector superspace $\mc{G}$, with parity $p$, endowed with a product $\wedge:\,\mc{G}\otimes \mc{G}\to \mc{G}$, 
and a bracket $[\cdot,\cdot]:\,\mc{G}\otimes \mc{G}\to \mc{G}$ satisfying the following properties:
\begin{enumerate}[(i)]
\item 
$(\mc{G},\wedge)$ is a commutative associative superalgebra,
\item 
$(\Pi\mc{G},[\cdot,\cdot])$ is a Lie superalgebra,
\item
the following left Leibniz rule holds:
\begin{equation}\label{Leibniz} 
[X, Y \wedge Z] = [X,Y]\wedge Z+(-1)^{(p(X)+\bar1)p(Y)}Y\wedge[X,Z]\,.
\end{equation}
\end{enumerate}
\end{definition}
From the left Leibniz rule \eqref{Leibniz} and skewcommutativity, we get the \emph{right Leibniz rule}:
\begin{equation}\label{r-Leibniz} 
[X\wedge Y,Z] = X\wedge[Y,Z]+(-1)^{p(Y)(p(Z)+\bar1)}[X,Z]\wedge Y\,.
\end{equation}

\begin{proposition}\label{schouten}
Let $(\mf g,A)$ be a Lie superalgebroid.
Then there exists a unique structure of a Gerstenhaber algebra on the superspace
$\mc{G}=S_A(\Pi\mf g)$, with parity denoted by $p$,
where the commutative associative superalgebra product $\wedge$ on $\mc{G}$ is the product 
in the symmetric superalgebra $S_A(\Pi\mf g)$,
and the Lie superalgebra bracket $[\cdot,\cdot]$ on $\Pi\mc{G}$, called the \emph{Schouten bracket},
extends inductively that on the Lie superalgebra $\mf g\ltimes\Pi A$ from Example \ref{0219:ex2}
by the Leibniz rule \eqref{Leibniz}.
\end{proposition}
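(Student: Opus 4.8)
The plan is to construct the Schouten bracket on $\mc G = S_A(\Pi\mf g)$ by induction on the symmetric degree, using the Leibniz rule \eqref{Leibniz} as the \emph{definition} of the bracket between higher-degree elements, and then to verify the Gerstenhaber axioms. First I would fix the base data: on $\mf g \ltimes \Pi A$ the bracket is already given (Example \ref{0219:ex2}), and $\mc G$ is filtered by symmetric degree with $\mc G^{\le 0} = A$ and $\mc G^{\le 1} = A \oplus \Pi\mf g$ as an $A$-module. Since every element of $\mc G$ of degree $\ge 2$ is an $A$-linear combination of products $X_1 \wedge \cdots \wedge X_n$ with $X_i \in \Pi\mf g$, the left Leibniz rule \eqref{Leibniz} forces, for $X \in \Pi\mf g$,
\begin{equation*}
[X, X_1\wedge\cdots\wedge X_n]
=\sum_{i=1}^n (-1)^{(p(X)+\bar1)(p(X_1)+\cdots+p(X_{i-1}))}
X_1\wedge\cdots\wedge[X,X_i]\wedge\cdots\wedge X_n\,,
\end{equation*}
and then the right Leibniz rule \eqref{r-Leibniz} extends this to $[Y,Z]$ for arbitrary $Y\in\mc G$, again by induction on the degree of $Y$. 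The compatibility condition (iii) of the Lie superalgebroid, $[X,fY]=X(f)Y+f[X,Y]$, is exactly what guarantees that these expressions descend from the free tensor algebra to $S_A(\Pi\mf g)$, i.e. that the bracket is well defined modulo the symmetric-algebra relations and $A$-bilinear in the appropriate sense.

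The main content is then a three-part verification. First, \textbf{well-definedness}: the inductive formula must respect both the symmetry relations $X_i \wedge X_j = (-1)^{p(X_i)p(X_j)}X_j \wedge X_i$ and the $A$-linearity built into $S_A(\Pi\mf g)$. For the symmetry I would check that swapping two adjacent factors $X_i, X_{i+1}$ in the defining sum produces the correct sign, which reduces to the skewcommutativity of the base bracket on $\mf g$ together with the Koszul signs. For $A$-linearity I would verify that $[X, f\cdot(X_1\wedge\cdots)]$ computed two ways agrees, using axiom (iii) to absorb the derivation term $X(f)$. Second, \textbf{skewcommutativity} of the extended bracket, $[Y,Z]=-(-1)^{(p(Y)+\bar1)(p(Z)+\bar1)}[Z,Y]$, which I would prove by a double induction on $\deg Y + \deg Z$, using the two Leibniz rules to peel off one factor at a time and invoking skewcommutativity at the base level.

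The hard part, and the step I expect to consume most of the work, is the \textbf{Jacobi identity} for the extended bracket. The clean strategy is to prove that the operator $\ad_Y = [Y,\cdot]$ acts as a derivation of parity $p(Y)+\bar1$ of the associative product $\wedge$ (this is just the left Leibniz rule) and then to show that $\ad_{[Y,Z]} = [\ad_Y, \ad_Z]$ as operators on $\mc G$, which is the Jacobi identity in disguise. I would establish this by induction on $\deg Y + \deg Z + \deg W$ for the bracketed triple $[[Y,Z],W]$: the key observation is that both sides are derivations of $\wedge$ in the argument $W$, so it suffices to verify the identity when $W$ ranges over the generators $A \oplus \Pi\mf g$, and symmetrically to reduce $Y$ and $Z$ to generators using the Leibniz rules. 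The base case, where all three arguments lie in $\mf g \ltimes \Pi A$, holds because the bracket there is already a Lie superalgebra bracket by hypothesis. The bookkeeping of Koszul signs in the reduction is the genuine obstacle; once the identity is reduced to generators, it follows from the Lie superalgebroid axioms (i)--(iii) of the definition preceding Example \ref{0220:ex1}. Uniqueness is immediate, since the Leibniz rules determine the bracket on all of $\mc G$ from its values on generators, which are prescribed.
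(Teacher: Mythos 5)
Your proposal is correct and follows essentially the same route as the paper: the bracket is defined by extending the base bracket on $\mf g\ltimes\Pi A$ via the left and right Leibniz rules, well-definedness is reduced to compatibility with the symmetric-algebra relations (with axiom (iii) of the superalgebroid doing the work), and skewcommutativity and the Jacobi identity are then verified by induction. The only notable presentational difference is that the paper carries out the well-definedness check by lifting the bracket to the tensor algebra $\mc T(\Pi\mf g\oplus A)$ and showing the defining ideal is central for it, which makes explicit the one subtlety your sketch should not gloss over: the left and right Leibniz extensions agree only modulo the symmetric relations (not in the tensor algebra), and establishing this compatibility is exactly what your planned verification that $\ad_Y$ is a derivation of $\wedge$ for every $Y$ amounts to.
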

\begin{proof} 
The symmetric superalgebra $S_A(\Pi\mf g)$ is defined as the quotient 
of the tensor superalgebra $\mc T(\Pi\mf g\oplus A)$ by the two-sided ideal $\mc K$ 
generated by the relations
\begin{equation}\label{0222:eq2}
\begin{array}{rll}
(i) & a\otimes b = (-1)^{p(a)p(b)}b\otimes a \,,\,\, & a,b\in\Pi\mf g\oplus A\,, \\
(ii) & f\otimes X = fX\,, & f\in A,\,X\in\Pi\mf g\,.
\end{array}
\end{equation}
Therefore, in order to prove that the Schouten bracket is well defined, we need to do three things.
First, we check that its inductive definition preserves associativity of the tensor product, 
so that we have a well-defined bracket on the whole tensor algebra,
$[\cdot,\cdot]\,\tilde{}:\,\Pi\mc T(\Pi\mf g\oplus A)\times\Pi\mc T(\Pi\mf g\oplus A)
\to \Pi S_A(\Pi\mf g)$.
Second, we argue that, in order to prove that $\mc K$ is in the kernel of this bracket, it suffices to show
that it preserves relations \eqref{0222:eq2}$(i)$ and $(ii)$.
Finally, we prove that these relations are indeed preserved.

We start by defining a bracket 
$[\cdot,\cdot]\,\tilde{}:\,\Pi\mc T(\Pi\mf g\oplus A)\times\Pi\mc T(\Pi\mf g\oplus A)
\to \Pi S_A(\Pi\mf g)$,
such that its restriction to $\mf g\oplus\Pi A$ coincides with the given 
Lie bracket on $\mf g\ltimes\Pi A$.
We do it, inductively, in three steps.
First we extend it to a bracket 
$[\cdot,\cdot]\,\tilde{}:\,(\mf g\oplus\Pi A)\times\Pi\mc T(\Pi\mf g\oplus A)
\to\Pi\mc T(\Pi\mf g\oplus A)$, by the left Leibniz rule \eqref{Leibniz}
with $\wedge$ replaced by $\otimes$ and $[\cdot,\cdot]$ replaced by $[\cdot,\cdot]\,\tilde{}$.
To prove that this map is well defined we check that the left Leibniz rule preserves
the associativity relation in the tensor algebra.
Indeed, both $[X,Y\otimes(Z\otimes W)]\,\tilde{}$
and $[X,(Y\otimes Z)\otimes W]\,\tilde{}$ are equal to
\begin{equation}\label{0225:eq1}
\begin{array}{c}
[X,Y]\,\tilde{}\otimes Z\otimes W
+(-1)^{(p(X)+1)p(Y)}Y\otimes[X,Z]\,\tilde{}\otimes W \\
+(-1)^{(p(X)+1)(p(Y)+p(Z))}Y\otimes Z\otimes[X,W]\,\tilde{}\,.
\end{array}
\end{equation}
We then further extend it to a bracket
$[\cdot,\cdot]\,\tilde{}:\,\Pi\mc T(\Pi\mf g\oplus A)\times\Pi\mc T(\Pi\mf g\oplus A)
\to\Pi\mc T(\Pi\mf g\oplus A)$, by the right Leibniz rule \eqref{r-Leibniz},
with the same changes in notation.
Again, we prove that this map is well defined by checking that the right Leibniz rule preserves
associativity. Indeed, both $[X\otimes (Y\otimes Z),W)]\,\tilde{}$
and $[(X\otimes Y)\otimes Z,W]\,\tilde{}$ are equal to
\begin{equation}\label{0225:eq2}
\begin{array}{c}
X\otimes Y\otimes [Z,W]\,\tilde{}
+(-1)^{p(Z)(p(W)+1)}X\otimes [Y,W]\,\tilde{}\otimes Z \\
+(-1)^{(p(Y)+p(Z))(p(W)+1)}[X,W]\,\tilde{}\otimes Y\otimes Z\,.
\end{array}
\end{equation}
Finally, we compose the bracket $[\cdot,\cdot]\,\tilde{}$
with the canonical quotient map $\Pi\mc T(\Pi\mf g\oplus A)\to \Pi S_A(\Pi\mf g)$,
and we keep the same notation for the resulting map:
$[\cdot,\cdot]\,\tilde{}:\,\Pi\mc T(\Pi\mf g\oplus A)\times\Pi\mc T(\Pi\mf g\oplus A)
\to \Pi S_A(\Pi\mf g)$.
We claim that this map satisfies both the left and the right Leibniz rules.
The right Leibniz rule holds by construction, while for the left one we have to check
that, when computing $[X\otimes Y,Z\otimes W]$,
we get the same result if we first apply the left Leibniz rule and then the right one,
or vice versa.
As the reader can easily check, the results are not equal in the tensor algebra,
but they become equal after we pass to the symmetric algebra.

Next, it is immediate to check that the bracket $[\cdot,\cdot]\,\tilde{}$ preserves
the relations \eqref{0222:eq2}$(i)$ and $(ii)$,
namely, the differences between the LHS and RHS in both relations
lie in the center of this bracket.
This allows us to conclude, recalling \eqref{0225:eq1} and \eqref{0225:eq2},
that two-sided ideal $\mc K\subset\mc T(\Pi\mf g\oplus A)$ 
generated by the relations \eqref{0222:eq2}$(i) $and $(ii)$
is in the center of the bracket $[\cdot,\cdot]\,\tilde{}$.
Hence it factors through a well defined bracket 
$[\cdot,\cdot]:\,S_A(\Pi\mf g)\times S_A(\Pi\mf g)\to S_A(\Pi\mf g)$
satisfying both the left and the right Leibniz rules \eqref{Leibniz} and \eqref{r-Leibniz}.
Using this it is easy to check, by induction, that the bracket is skewcommutative,
and after that, again by induction, that it satisfies the Jacobi identity.
\flushright\qed
\end{proof}

\begin{remark}
If $\mf g$ is a Lie superalgebra with parity $\bar p$, the corresponding parity $p$ in the
Gerstenhaber algebra $\mc G=S_A(\Pi\mf g)$ is
\begin{equation}\label{0225:eq3}
p(X_1\wedge\cdots\wedge X_m)=\bar p(X_1)+\cdots+\bar p(X_m)+m\,,
\end{equation}
and the parity $\bar p$ of the Lie superalgebra $\Pi\mc G$ is
\begin{equation}\label{0225:eq4}
\bar p(X_1\wedge\cdots\wedge X_m)=\bar p(X_1)+\cdots+\bar p(X_m)+m+1\,.
\end{equation}
One derives from the left and right Leibniz rules \eqref{Leibniz} and \eqref{r-Leibniz}
explicit formulas for the Schouten bracket between two arbitrary elements
of the Gerstenhaber algebra $\mc G$.
For $f\in A$ and $X=X_1\wedge\cdots\wedge X_m\in\mc G$, with $X_i\in\mf g$, we have
\begin{equation}\label{0225:eq5}
\begin{array}{l}
\vphantom{\Bigg(}
\displaystyle{
[f,X_1\wedge\cdots\wedge X_m]=(-1)^{\bar p(X_1)+\cdots+\bar p(X_m)+m}[X_1\wedge\cdots\wedge X_m,f]
}\\
\qquad\qquad\qquad\qquad
\displaystyle{
=\sum_{i=1}^m(-1)^{\bar p(X_1)+\cdots+\bar p(X_{i-1})+i} X_i(f) \, X_1\wedge \stackrel{i}{\check{\cdots}} \wedge X_m\,,
}\\
\end{array}
\end{equation}
while for $X=X_1\wedge\cdots\wedge X_m,\, Y=Y_1\wedge\cdots\wedge Y_n\in\mc G$, 
with $X_i,Y_j\in\mf g$, we have
\begin{equation}\label{0225:eq6}
\begin{array}{l}
\displaystyle{
{\big[X_1\wedge \dots \wedge X_m , Y_1\wedge \dots \wedge Y_n\big]}
}\\
\qquad\displaystyle{
=\sum_{i=1}^{m}\sum_{j=1}^n(-1)^{s_{ij}(X,Y)}
[X_i,Y_j]\wedge X_1\wedge \stackrel{i}{\check{\cdots}} \wedge X_m \wedge Y_1\wedge \stackrel{j}{\check{\cdots}} \wedge Y_n\,,
}
\end{array}
\end{equation}
where
$$
\begin{array}{rcl}
s_{ij}(X,Y)
&=& \big(\bar p(X_i)+1\big)\big(\bar p(X_1)+\cdots+\bar p(X_{i-1})+i+1\big) \\
&& +\big(\bar p(Y_j)+1\big)\big(\bar p(Y_1)+\cdots+\bar p(Y_{j-1})+j+1\big) \\
&& +\bar p(Y_j)\big(\bar p(X_1)+\stackrel{i}{\check{\cdots}}+\bar p(X_m)+m+1\big)\,.
\end{array}
$$
In particular, if $\mf g$ is a Lie algebra, then $s_{ij}(X,Y)=i+j$.
\end{remark}

\subsection{Representations of a Gerstenhaber algebra}

\begin{definition}\label{0220:def}
A \emph{representation} of a Gerstenhaber algebra $\mc{G}$ with parity $p$ on a superspace $V$ is 
a module structure over the commutative associative superalgebra $(\mc{G},\wedge)$,
denoted by $\iota_\cdot:\mc{G} \otimes V\to V,\,X\otimes v\mapsto\iota_X(v)$, and called \emph{contraction},
together with 
a module structure over the Lie superalgebra $(\Pi \mc{G},[\cdot,\cdot])$,
denoted by $L_\cdot:\mc{G} \otimes V\to V,\,X\otimes v\mapsto L_X(v)$, and called \emph{Lie derivative},
such that the left Leibniz rule is preserved:
\begin{equation} \label{comp1} 
[L_X,\iota_Y]
\,\Big( 
= L_X \iota_Y - (-1)^{(p(X)+\bar1)p(Y)} \iota_Y L_X
\Big)
= \iota_{[X,Y]} \,.
\end{equation}
\end{definition}

For example, letting $\iota_X=X\wedge$ and $L_X=\ad X$, we get a representation of a Gerstenhaber algebra $\mc{G}$ on itself,
called its adjoint representation.

\begin{remark}
Note that a representation of a Gerstenhaber algebra $(\mc{G},\wedge,[\cdot,\cdot])$ on $V$
is the same as a rigged representation of the Lie superalgebra $(\Pi\mc{G},[\cdot,\cdot])$
such that the rigging $X\mapsto\iota_X$ is a representation of the associative superalgebra $(\mc{G},\wedge)$.
\end{remark}

\begin{theorem}\label{0217:th}
Let $(\mf g,A)$ be a Lie superalgebroid,
and consider the Gerstenhaber algebra $\mc{G}=S_A(\Pi\mf g)$, with parity $p$.
Then any $\epsilon$-rigged representation of the Lie superalgebroid $(\mf g,A)$ on a vector superspace $V$,
extends uniquely to a representation of the Gerstenhaber algebra $\mc{G}$ on $V$ such that, for every $X,Y\in\mc{G}$,
the following $\epsilon$-\emph{right Leibniz rule} holds:
\begin{equation}\label{0217:eq1}
\begin{array}{rcl}
L_{X\wedge Y} &=& \iota_X L_Y+(-1)^{p(Y)}L_X\iota_Y-\epsilon(-1)^{p(Y)}\iota_{[X,Y]} \\
\Big( &=& \iota_X L_Y+(-1)^{p(X)p(Y)}\iota_YL_X+(1-\epsilon)(-1)^{p(Y)}\iota_{[X,Y]}
\Big)\,.
\end{array}
\end{equation}
\end{theorem}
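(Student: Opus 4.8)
The plan is to build the contraction and the Lie derivative separately, and then to verify their compatibility and the homomorphism property by induction on degree. The contraction is forced: since a representation of $\mc G$ must restrict to a module over the commutative associative superalgebra $(\mc G,\wedge)$, the only possibility is $\iota_{X_1\wedge\cdots\wedge X_m}=\iota_{X_1}\cdots\iota_{X_m}$ for $X_i\in\mf g\oplus\Pi A$. I would check that this is well defined on $\mc G=S_A(\Pi\mf g)$ by inspecting the defining relations \eqref{0222:eq2}: relation $(i)$ is respected because the operators $\iota_a$ super-commute by condition $(i)$ of the rigged representation (Definition \ref{def:rigged}), and relation $(ii)$ is respected because $\iota_{fX}=\iota_f\iota_X$ by condition $(ii)$ of Definition \ref{0220:def2}(b). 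The Lie derivative is likewise forced: the $\epsilon$-right Leibniz rule \eqref{0217:eq1}, applied with $X=X_1\in\mf g\oplus\Pi A$ of degree one and $X'$ of lower degree, reads
\[
L_{X_1\wedge X'}=\iota_{X_1}L_{X'}+(-1)^{p(X')}L_{X_1}\iota_{X'}-\epsilon(-1)^{p(X')}\iota_{[X_1,X']}\,,
\]
where $[\cdot,\cdot]$ is the Schouten bracket of Proposition \ref{schouten}. As $\iota$ is already defined on all of $\mc G$ and $[X_1,X']$ has strictly smaller degree than $X_1\wedge X'$, this recursion both determines $L$ uniquely from its given values on $\mf g\oplus\Pi A$ and provides a candidate definition.

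The substance is to show that this recursively defined $L$ descends to $S_A(\Pi\mf g)$ and satisfies the representation axiom \eqref{comp1} together with the homomorphism property $[L_X,L_Y]=L_{[X,Y]}$. The central point --- and the reason these properties must be proved together --- is that the equivalence of the two displayed forms of \eqref{0217:eq1} is itself equivalent to \eqref{comp1}: subtracting the two right-hand sides leaves precisely $(-1)^{p(Y)}\big([L_X,\iota_Y]-\iota_{[X,Y]}\big)$. Since the super-symmetry $L_{X\wedge Y}=(-1)^{p(X)p(Y)}L_{Y\wedge X}$ is exactly the instance of relation \eqref{0222:eq2}$(i)$ that makes $L$ well defined on the symmetric algebra, the well-definedness of $L$ and the compatibility \eqref{comp1} are inextricably linked and must be established in a single induction. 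The other descent relation, $A$-linearity, presents no difficulty: on the generators it reduces to compatibility condition $(iii)$ of Definition \ref{0220:def2}(b), namely $L_{fX}=\iota_fL_X-(-1)^{\bar p(X)}L_f\iota_X-\epsilon\iota_{X(f)}$, which is part of the hypothesis of an $\epsilon$-rigged representation.

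I would therefore carry out a nested induction on total degree that simultaneously yields well-definedness, \eqref{comp1}, and the homomorphism property, invoking each only in degrees where it has already been established. For \eqref{comp1} at $\deg X=m$, I would peel off the leftmost factor of $X$, expand $[L_X,\iota_Y]$ using that the super-commutator $[\cdot,\iota_Y]$ is a derivation of the associative product, note that $[\iota_{X_1},\iota_Y]=0$, invoke \eqref{comp1} in degree $m-1$, and reassemble the terms into $\iota_{[X,Y]}$ via the Leibniz rules \eqref{Leibniz} and \eqref{r-Leibniz} for the Schouten bracket; the base case $\deg X=1$ is condition $(ii)$ of Definition \ref{def:rigged} propagated across the product $\iota_Y=\iota_{Y_1}\cdots\iota_{Y_n}$. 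For the homomorphism property I would expand $[L_X,L_Y]$ through \eqref{0217:eq1}, use the already-established \eqref{comp1} and the inductive hypothesis in lower degrees, and close the computation with the Jacobi identity for the Schouten bracket. The main obstacle is not conceptual but is the sign bookkeeping: one must match, term by term, the Koszul signs generated by the super-commutators in $\End(V)$ against the signs $s_{ij}(X,Y)$ appearing in the explicit Schouten-bracket formulas \eqref{0225:eq5}--\eqref{0225:eq6}, and one must order the inductions so that the three interlocking statements are never used prematurely. This careful organization, rather than any single computation, is the delicate part of the argument.
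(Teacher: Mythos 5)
Your proposal follows the paper's proof in all essentials: the contraction is forced to be $\iota_{X_1\wedge\cdots\wedge X_m}=\iota_{X_1}\cdots\iota_{X_m}$ and is well defined by the rigged-representation axioms; the Lie derivative is defined by the recursion \eqref{0217:eq1}; the rigged axioms of Definition \ref{0220:def2}(b) are exactly what makes $L$ descend to $S_A(\Pi\mf g)$; \eqref{comp1} is then proved by induction peeling off one factor at a time; and the homomorphism property $[L_X,L_Y]=L_{[X,Y]}$ is closed by the Jacobi identity for the Schouten bracket. The one place where you diverge is your claim that well-definedness of $L$ and the compatibility \eqref{comp1} are ``inextricably linked and must be established in a single induction.'' Your observation that the discrepancy between the two forms of \eqref{0217:eq1} equals $(-1)^{p(Y)}\big([L_X,\iota_Y]-\iota_{[X,Y]}\big)$ is correct and is a nice structural insight, but the conclusion of necessity is overstated: the paper decouples the two statements. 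It lifts $L$ to a map $\tilde L$ on the full tensor algebra $\mc T(\Pi\mf g\oplus A)$ (defined by \eqref{0217:eq1} for \emph{arbitrary} tensor factors, with consistency checked via the associativity identity \eqref{0223:eq1}), verifies that the \emph{generating} relations \eqref{0222:eq2} are preserved --- which requires only the generator-level compatibility that is part of the hypothesis of an $\epsilon$-rigged representation --- and then kills the whole ideal $\mc K$ at once by observing that the expression \eqref{0223:eq1} is built entirely from the contraction and the Schouten bracket, both of which already descend to $S_A(\Pi\mf g)$. This makes well-definedness, \eqref{comp1}, and the homomorphism property three cleanly sequential steps rather than an interlocked nested induction; your organization would also work, but it carries the extra burden of ordering three simultaneous inductive statements, which the paper's tensor-algebra argument renders unnecessary.
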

\begin{proof}
Since the contraction $\iota_\cdot:\,\mc{G}\to\End(V)$ is a representation of the commutative associative superalgebra $(\mc{G},\wedge)$,
and it extends the rigging of the representation of the Lie superalgebra $\mf g\ltimes\Pi A$ on $V$,
it is forced to be given by the following formula:
\begin{equation}\label{0222:eq1}
\iota_{X_1\wedge\cdots\wedge X_m}=\iota_{X_1}\cdots\iota_{X_m}\,,
\end{equation}
for all $X_1,\dots,X_m\in\mf g$. 
It is immediate to check, using the assumptions that $\iota_{fX}=\iota_f\iota_X$ for all $f\in A,X\in\mf g$, 
and $[\iota_a,\iota_b]=0$ for all $a,b\in\Pi\mf g\oplus A$,
that the contraction map is a well-defined representation of the commutative associative superalgebra $(\mc{G},\wedge)$.

By assumption, the Lie derivative $L_\cdot:\,\Pi\mc{G}\to\End(V)$
is defined by extending, inductively, the representation of the Lie superalgebra $\mf g\ltimes\Pi A$
on $V$, using equation \eqref{0217:eq1}.
In order to prove that the map $L_\cdot$ is well defined, we proceed
as in the proof of Proposition \ref{schouten}.
First, we define a map $\tilde L_\cdot$ from the tensor algebra $\mc T(\Pi\mf g\oplus A)$ to $\End(V)$
(which reverses the parity),
extending $L_\cdot:\,\mf g\oplus\Pi A\to\End(V)$, inductively, by saying that $\tilde L_{X\otimes Y}$
is given by the RHS in \eqref{0217:eq1}.
By applying \eqref{0217:eq1} twice,
we get that both $\tilde L_{X\otimes(Y\otimes Z)}$ and $\tilde L_{(X\otimes Y)\otimes Z}$ are equal to
\begin{equation}\label{0223:eq1}
\begin{array}{l}
\iota_X\iota_Y \tilde L_Z+(-1)^{p(Z)}\iota_X \tilde L_Y\iota_Z+(-1)^{p(Y)+p(Z)}\tilde L_X\iota_Y\iota_Z \\
-\epsilon(-1)^{p(Z)}\Big(
\iota_X\iota_{[Y,Z]}+(-1)^{p(Y)}\iota_{[X,Y]}\iota_Z+(-1)^{p(X)p(Y)}\iota_Y\iota_{[X,Z]}
\Big)\,,
\end{array}
\end{equation}
proving that $\tilde L_\cdot$ preserves the associativity relation for the tensor product.
Above we denoted, by an abuse of notation, the lifts of the contraction map and the Schouten bracket
to the tensor algebra $\mc T(\Pi\mf g\oplus A)$ by $\iota_\cdot$ and $[\cdot,\cdot]$ respectively. 
Hence $\tilde L_\cdot$ is a well defined map: $\mc T(\Pi\mf g\oplus A)\to\End(V)$.
Moreover, the fact that $\tilde L_\cdot$ preserves the defining relations \eqref{0222:eq2}$(i)$ and $(ii)$
is encoded in the assumption that $V$ is an $\epsilon$-rigged representation 
of the Lie superalgebroid $(\mf g,A)$.
More precisely, for the relation \eqref{0222:eq2}$(i)$ with $a=f,b=g\in A$,
we have that $\tilde L_{f\otimes g}=\iota_fL_g+L_f\iota_g$, which is the same as $\tilde L_{g\otimes f}$ 
thanks to condition $(i)$ in Definition \ref{0220:def2}(b) and the fact that $L_f$ and $\iota_g$ commute.
For $a=X,b=Y\in\mf g$, we have 
$\tilde L_{X\otimes Y}-(-1)^{p(X)p(Y)}\tilde L_{Y\otimes X}
=[\iota_X,L_Y]+(-1)^{p(Y)}[L_X,\iota_Y]
-\epsilon\big(\iota_{[X,Y]}+(-1)^{1+(p(X)+\bar1)(p(Y)+\bar1)}\iota_{[Y,X]}\big)$,
and this is zero by the definition of $\epsilon$-rigged representation and 
by the skewcommutativity of the Lie bracket on $\mf g$.
Finally, when $a=X\in\mf g,\,b=f\in A$, we have that 
both $\tilde L_{f\otimes X}$ and $\tilde L_{X\otimes f}$ are equal to
$\iota_fL_X+(-1)^{p(X)}L_f\iota_X-\epsilon\iota_{X(f)}$,
thanks to the assumption that $[L_X,\iota_f]=\iota_{X(f)}$.
Moreover, this expression is equal to $L_{fX}$, by condition $(iii)$ in Definition \ref{0220:def2}(b),
thus proving that $\tilde L_\cdot$ preserves the relation \eqref{0222:eq2}$(ii)$.
What we just proved allows us to conclude that two-sided ideal $\mc K\subset\mc T(\Pi\mf g\oplus A)$ 
generated by the relations \eqref{0222:eq2}$(i)$ and $(ii)$
is in the kernel of $\tilde L_\cdot$ (this is not immediate since 
$\tilde L_\cdot$ is not a homomorphism of associative algebras).
Indeed, both the contraction map $\iota_\cdot$ and the Schouten bracket $[\cdot,\cdot]$ are defined
on the symmetric superalgebra $S_A(\Pi\mf g)$, and hence, when lifted to the tensor algebra
$\mc T(\Pi\mf g\oplus A)$, they map the ideal $\mc K$ to zero.
Therefore it is clear, from the expression \eqref{0223:eq1} for $\tilde L_{X\otimes Y\otimes Z}$,
that $\mc K$ is in the kernel of $\tilde L_\cdot$.
Hence $\tilde L_\cdot$ factors through a well defined map $L_\cdot:\, S_A(\Pi\mf g)\to\End(V)$.

To complete the proof we have to check that the pair $(\iota_\cdot,L_\cdot)$ is a Gerstenhaber algebra representation.
By assumption the left Leibniz rule \eqref{comp1} holds for $X,Y\in\Pi\mf g\oplus A$.
Therefore, in order to prove \eqref{comp1} by induction, we note that,
$$
\begin{array}{rcl}
[L_X,\iota_{Y\wedge Z}] 
&=& [L_X,\iota_Y\iota_Z] 
= [L_X,\iota_Y]\iota_Z+(-1)^{(p(X)+\bar1)p(Y)}\iota_Y[L_X,\iota_Z] \\
&=& \iota_{[X,Y]}\iota_Z+(-1)^{(p(X)+\bar1)p(Y)}\iota_Y\iota_{[X,Z]}
= \iota_{[X,Y\wedge Z]}\,,
\end{array}
$$
for all $X,Y,Z\in\mc{G}$ such that $Y,Z$ have degree at least 1, and
$$
\begin{array}{rcl}
{[L_{X\wedge Y},\iota_Z]} 
&=& [\iota_XL_Y+(-1)^{p(Y)}L_X\iota_Y-\epsilon(-1)^{p(Y)}\iota_{[X,Y]},\iota_Z] \\
&=& \iota_X[L_Y,\iota_Z]+(-1)^{p(Y)(p(Z)+\bar1)}[L_X,\iota_Z]\iota_Y \\
&=& \iota_X\iota_{[Y,Z]}+(-1)^{p(Y)(p(Z)+\bar1)}\iota_{[X,Z]}\iota_Y
= \iota_{[X\wedge Y,Z]}\,,
\end{array}
$$
for all $X,Y,Z\in\mc{G}$ such that $X,Y$ have degree at least 1.
In the computations above we used the inductive assumptions, formula \eqref{0217:eq1}, and the commutation 
relation $[\iota_X,\iota_Y]=0$.

Finally, we use the above results to prove, by induction, that $L_\cdot:\,\Pi\mc{G}\to\End(V)$
is a Lie superalgebra homomorphism: $[L_X,L_Y]=L_{[X,Y]}$ for all $X,Y\in\Pi\mc{G}$.
If both $X,Y$ are in $\mf g\oplus\Pi A$, this holds by assumption.
Moreover, by skewcommutativity, it suffices to check the homomorphism condition for $X,\,Y\wedge Z$,
where both $Y$ and $Z$ have degree greater or equal than 1:
\begin{equation}\label{0224:eq1}
[L_X,L_{Y\wedge Z}]=L_{[X,Y\wedge Z]}\,.
\end{equation}
The LHS of \eqref{0224:eq1} is, by inductive assumption,
$$
\begin{array}{l}
[L_X,L_{Y\wedge Z}] 
= [L_X,\iota_YL_Z+(-1)^{p(Z)}L_Y\iota_Z-\epsilon(-1)^{p(Z)}\iota_{[Y,Z]}] \\
\,\,\,\,\,\,\,\,\,
= [L_X,\iota_Y]L_Z +(-1)^{(p(X)+\bar1)p(Y)}\iota_Y[L_X,L_Z]
+(-1)^{p(Z)}[L_X,L_Y]\iota_Z \\
\,\,\,\,\,\,\,\,\,\,\,\,\,\,\,\,\,\,
 +(-1)^{p(Z)+(p(X)+\bar1)(p(Y)+\bar1)}L_Y[L_X,\iota_Z]
-\epsilon(-1)^{p(Z)}[L_X,\iota_{[Y,Z]}] \\
\,\,\,\,\,\,\,\,\,
= \iota_{[X,Y]}L_Z+(-1)^{(p(X)+\bar1)p(Y)}\iota_YL_{[X,Z]}
+(-1)^{p(Z)}L_{[X,Y]}\iota_Z \\
\,\,\,\,\,\,\,\,\,\,\,\,\,\,\,\,\,\,
 +(-1)^{p(Z)+(p(X)+\bar1)(p(Y)+\bar1)}L_Y\iota_{[X,Z]} 
-\epsilon(-1)^{p(Z)}\iota_{[X,[Y,Z]]}\,.
\end{array}
$$
Similarly, the RHS of \eqref{0224:eq1} is
$$
\begin{array}{l}
L_{[X,Y\wedge Z]}
= L_{[X,Y]\wedge Z}+(-1)^{(p(X)+\bar1)p(Y)}L_{Y\wedge[X,Z]} \\
\,\,\,\,\,\,\,\,\,
= \iota_{[X,Y]}L_Z+(-1)^{p(Z)}L_{[X,Y]}\iota_Z -\epsilon(-1)^{p(Z)}\iota_{[[X,Y],Z]} \\
\,\,\,\,\,\,\,\,\,\,\,\,\,\,\,\,\,\,
+(-1)^{(p(X)+\bar1)p(Y)}\Big(
\iota_YL_{[X,Z]}+(-1)^{p(Z)+(p(X)+\bar1)}L_Y\iota_{[X,Z]} \\
\,\,\,\,\,\,\,\,\,\,\,\,\,\,\,\,\,\,
-\epsilon(-1)^{p(X)+p(Z)+1}\iota_{[Y,[X,Z]]}\Big)\,.
\end{array}
$$
Equation \eqref{0224:eq1} now follows by the Jacobi identity for the Schouten bracket.
\flushright\qed
\end{proof}

\begin{remark}\label{0225:rem}
One can show that among all possible expressions for $L_{X\wedge Y}$ of the form
$$
a\iota_XL_Y+b\iota_YL_X+cL_X\iota_Y+dL_Y\iota_X+e\iota_{[X,Y]}\,,
$$
with $a,b,c,d,e\in\mb F$,
only those given by \eqref{0217:eq1} satisfy the left Leibniz rule \eqref{comp1},
and therefore give rise to a representation of the Gerstenhaber algebra $S_A(\Pi\mf g)$.
\end{remark}

\begin{example}
The adjoint representation of the Gerstenhaber algebra $S_A(\Pi\mf g)$ on itself,
satisfies the $\epsilon$-right Leibniz formula with $\epsilon=1$.

In the next subsection we will see how to construct representations of the Gerstenhaber algebra
$S_A(\Pi\mf g)$ satisfying the $\epsilon$-right Leibniz formula with $\epsilon=0$,
starting from a $(\mf g,A)$-complex and using Cartan's formula.
\end{example}

\begin{example}
If a Lie superalgebroid $(\mf g,A)$ is such that the action of $\mf g$ on $A$ is trivial,
then every $\epsilon_0$-rigged representation of $(\mf g,A)$ on a vector superspace $V$,
for some $\epsilon_0$, is automatically $\epsilon$-rigged for all $\epsilon$.
Hence, by Theorem \ref{0217:th}, we automatically get in this case
a family of representations of the Gerstenhaber algebra $\mc G=S_A(\Pi\mf g)$
on $V$, depending on the parameter $\epsilon$, which satisfies 
the $\epsilon$-right Leibniz formula \eqref{0217:eq1}.
In particular, in this case, the adjoint representation of $\mc G=S_A(\Pi\mf g)$ on itself
admits a 1-parameter family of deformations.
\end{example}

\begin{remark}
Using the $\epsilon$-right Leibniz rule \eqref{0217:eq1}
and recalling the relation \eqref{0225:eq3} for the parity in $\mc G=S_A(\Pi\mf g)$, 
one can find an explicit formula
for the Lie derivative $L_X$, for an arbitrary element $X=X_1\wedge\cdots\wedge X_m$ with $X_i\in\mf g$:
\begin{equation}\label{0226:eq1}
\begin{array}{rcl}
L_X &=&
\displaystyle{
\sum_{i=1}^m (-1)^{\bar p(X_{i+1})+\cdots+\bar p(X_m)+m+i}
\iota_{X_1}\cdots\iota_{X_{i-1}}L_{X_i}\iota_{X_{i+1}}\cdots\iota_{X_m} 
}\\
&& \displaystyle{
-\epsilon \sum_{1\leq i<j\leq m}
(-1)^{t_{ij}(X)}
\iota_{X_1}\cdots\iota_{[X_i,X_j]}\stackrel{j}{\check{\cdots}}\iota_{X_m}\,,
}
\end{array}
\end{equation}
where
$$
\begin{array}{rcl}
&& t_{ij}(X) = \bar p(X_{i+1})+\cdots+\bar p(X_m)+m+i \\
&&\,\,\, +\big(\bar p(X_j)+\bar1\big)
\big(\bar p(X_{i+1})+\cdots+\bar p(X_{j-1})+i+j+\bar1\big)\,.
\end{array}
$$
Using the second formula in \eqref{0217:eq1} one can get a different expression for $L_X$,
which in the case $\epsilon=1$ gives
$$
L_X = 
\sum_{i=1}^m (-1)^{\big(\bar p(X_i)+\bar1\big)\big(\bar p(X_{i+1})+\cdots+\bar p(X_m)+m+i\big)}
\iota_{X_1}\stackrel{i}{\check{\cdots}}\iota_{X_m}L_{X_i}\,.
$$
We will use formula \eqref{0226:eq1} in the special case when $\mf g$ is a Lie algebra
and $\epsilon=0$. In this case it reads
\begin{equation}\label{0304:eq1}
L_X =
\sum_{i=1}^m (-1)^{m+i}
\iota_{X_1}\cdots\iota_{X_{i-1}}L_{X_i}\iota_{X_{i+1}}\cdots\iota_{X_m} 
\end{equation}
\end{remark}

\subsection{Calculus structure on a complex}

\begin{definition}
A \emph{calculus structure} of a Gerstenhaber algebra $\mc G$ on the complex $(\Omega,d)$
is a representation of the Gerstenhaber algebra $\mc G$ on the vector superspace $\Omega$,
denoted by $\iota_\cdot:\,\mc G\to\End(\Omega),\,L_\cdot:\,\Pi\mc G\to\End(\Omega)$,
satisfying Cartan's formula \eqref{comp3}.
Often we will denote this calculus structure as the pair $(\mc G,\Omega)$.
\end{definition}

\begin{remark}\label{0228:rem}
Given a Gerstenhaber algebra $\mc G$ and a complex $(\Omega,d)$,
in order to construct a calculus structure of $\mc G$ on $\Omega$, 
it suffices to define a representation $\iota_\cdot$ of the associative superalgebra
$(\mc G,\wedge)$ on the superspace $\Omega$, satisfying
$$
[[\iota_X,d],\iota_Y]=\iota_{[X,Y]}\,\,,\,\,\,\,
\text{ for all } X,Y\in\mc G\,,
$$
and to define the Lie derivative $L_X$ by Cartan's formula.
This follows from Lemma \ref{0220:lem1}.
\end{remark}

Note that a $\mc G$-complex, for the Gerstenhaber algebra $\mc G$,
is automatically a $\Pi\mc G$-complex, for the Lie algebra $\Pi\mc G$
(and hence for any subalgebra $\mf g\subset\Pi\mc G$).
The following result can be viewed as a converse statement:
to any $\mf g$-complex, or more generally to any $(\mf g,A)$-complex, $(\Omega,d)$, 
we associate a calculus structure of the Gerstenhaber algebra $\mc G=S_A(\Pi\mf g)$
on $\Omega$.
%
\begin{theorem}\label{0228:th}
Let $(\mf g,A)$ be a Lie superalgebroid.
Then any $(\mf g,A)$-complex $(\Omega,d)$ extends uniquely
to a calculus structure of the Gerstenhaber algebra $\mc G=S_A(\Pi\mf g)$ on 
the complex $(\Omega,d)$.
Moreover, the contraction $\iota_\cdot$ and the Lie derivative $L_\cdot$ of this calculus structure
satisfy the $0$-right Leibniz rule \eqref{0217:eq1}.
\end{theorem}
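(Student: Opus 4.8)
The plan is to obtain the calculus structure by composing the two constructions already established. First I would apply Proposition \ref{0220:prop1} to the given $(\mf g,A)$-complex $(\Omega,d)$: this yields a $0$-rigged representation of the Lie superalgebroid $(\mf g,A)$ on $\Omega$, in which the Lie derivative on $\mf g\ltimes\Pi A$ is given by Cartan's formula $L_a=[\iota_a,d]$. Next I would feed this into Theorem \ref{0217:th} with $\epsilon=0$, which extends it uniquely to a representation $(\iota_\cdot,L_\cdot)$ of the Gerstenhaber algebra $\mc G=S_A(\Pi\mf g)$ on $\Omega$; here $\iota_\cdot$ is forced to be the associative extension \eqref{0222:eq1} and $L_\cdot$ automatically obeys the $0$-right Leibniz rule \eqref{0217:eq1}, which is the final assertion of the theorem. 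What remains is to upgrade this representation to a genuine \emph{calculus} structure, i.e.\ to verify that Cartan's formula \eqref{comp3} holds on all of $\mc G$ and not only on $\mf g\ltimes\Pi A$.

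This last verification is the one nontrivial step, and I would prove it by induction on the wedge-degree $m$ of $X=X_1\wedge\cdots\wedge X_m$. The base case $m\le1$ is exactly the Cartan identity built into the $0$-rigged representation from Proposition \ref{0220:prop1}. For the inductive step, take $X,Y\in\mc G$ with $L_X=[\iota_X,d]$ and $L_Y=[\iota_Y,d]$ already known. The crucial point is that the choice $\epsilon=0$ makes the right Leibniz rule \eqref{0217:eq1} read
\begin{equation*}
L_{X\wedge Y}=\iota_X L_Y+(-1)^{p(Y)}L_X\iota_Y,
\end{equation*}
and this is precisely the graded Leibniz rule for the inner superderivation $[\,\cdot\,,d]$ applied to $\iota_{X\wedge Y}=\iota_X\iota_Y$. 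Indeed, using $p(\iota_Y)=p(Y)$ (compatible with the parity formula \eqref{0225:eq3}) and $p(d)=\bar1$, I compute
\begin{equation*}
[\iota_X\iota_Y,d]=\iota_X[\iota_Y,d]+(-1)^{p(\iota_Y)p(d)}[\iota_X,d]\iota_Y=\iota_X L_Y+(-1)^{p(Y)}L_X\iota_Y,
\end{equation*}
so that $L_{X\wedge Y}=[\iota_{X\wedge Y},d]$ and the induction closes.

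Uniqueness is then immediate: the contraction $\iota_\cdot$ is forced by \eqref{0222:eq1} once it is required to be a representation of $(\mc G,\wedge)$ extending the given $A$-module structure and the given $\iota_X$, $X\in\mf g$, while $L_\cdot$ is forced by Cartan's formula. I expect the only genuinely delicate point to be the sign bookkeeping in the inductive step---checking that the super-Leibniz sign $(-1)^{p(\iota_Y)p(d)}$ coming from $[\,\cdot\,,d]$ coincides with the sign $(-1)^{p(Y)}$ appearing in the $\epsilon=0$ form of \eqref{0217:eq1}; once the operator parities $p(\iota_Y)=p(Y)$ and $p(d)=\bar1$ are in hand, everything else is a direct appeal to Proposition \ref{0220:prop1} and Theorem \ref{0217:th}.
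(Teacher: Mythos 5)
Your proposal is correct and follows essentially the same route as the paper's proof: apply Proposition \ref{0220:prop1} to get the $0$-rigged representation, extend via Theorem \ref{0217:th} with $\epsilon=0$, and then verify Cartan's formula inductively by noting that $[\iota_X\iota_Y,d]=\iota_X[\iota_Y,d]+(-1)^{p(Y)}[\iota_X,d]\iota_Y$ reproduces exactly the $\epsilon=0$ right Leibniz rule, with uniqueness forced by \eqref{0222:eq1} and Cartan's formula. The sign bookkeeping you flag as the delicate point is handled identically in the paper.
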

\begin{proof}
By Proposition \ref{0220:prop1}, we have a $0$-rigged representation of the Lie superalgebroid
$(\mf g,A)$ on $\Omega$, satisfying Cartan's formula $[\iota_a,d]=L_a$ for all $a\in\mf g\oplus\Pi A$.
By Theorem \ref{0217:th}, this further extends to a representation of the Gerstenhaber algebra
$\mc G=S_A(\Pi\mf g)$ on $\Omega$ satisfying the $0$-right Leibniz rule.
To prove that this representation is indeed a calculus structure,
we only have to check that Cartan's formula \eqref{comp3} holds for every $X\in\mc G$.
We already know that it holds for $X\in\Pi\mf g\oplus A$, and we have, by induction,
$$
\begin{array}{rcl}
[\iota_{X\wedge Y},d]
&=& [\iota_X\iota_Y,d]
=\iota_X[\iota_Y,d]+(-1)^{p(Y)}[\iota_X,d]\iota_Y \\
&=& \iota_XL_Y+(-1)^{p(Y)}L_X\iota_Y=L_{X\wedge Y}\,.
\end{array}
$$
In the last identity we used the $0$-right Leibniz rule \eqref{0217:eq1}.
Uniqueness of the extension is clear,
since $\iota_\cdot$ extends uniquely to a representation of the associative algebra $(\mc G,\wedge)$,
and $L_\cdot$ is given by Cartan's formula.
\flushright\qed
\end{proof}

\begin{example}
Recall from Example \ref{0220:ex2} that the de Rham complex $(\Omega,d)$
carries a structure of a $(\mf g,A)$-complex,
where $A$ is the algebra of smooth functions
and $\mf g$ is the Lie algebra of smooth vector fields on a smooth manifold $M$.
Hence, by Theorem \ref{0228:th}, this extends uniquely to a calculus structure $(\mc G,\Omega)$,
where $\mc G$ is the Gerstenhaber algebra $S_A(\Pi\mf g)$.
The contraction by a polyvector field $X=X_1\wedge\cdots\wedge X_m$ is
$\iota_X=\iota_{X_1}\cdots\iota_{X_m}$, and the Lie derivative by $X$
is given by \eqref{0304:eq1}.
\end{example}

\begin{remark}
Note that both Cartan's formula \eqref{comp3} and the compatibility condition \eqref{comp1} differ
by a sign from those in \cite{DTT}.
The reason for this change is that, as defined in \cite{DTT},
a calculus is not a representation of the Gerstenhaber algebra $\mc G$.
Also, in \cite{DTT} the definition of a calculus includes the $0$-right Leibniz rule 
(rather a different version of it by a sign), which is of course superfluous
since it is equivalent to the following trivial identity:
$$
[\iota_X\iota_Y,d]=\iota_X[\iota_Y,d]+(-1)^{p(Y)}[\iota_X,d]\iota_Y\,.
$$
\end{remark}

\subsection{$\mb Z_+$-graded calculus structures}

Usually, a complex $(\Omega,d)$ is endowed with a $\mb Z_+$-grading
$\Omega=\bigoplus_{n\in\mb Z_+}\Omega^n$, such that $d(\Omega^{n-1})\subset\Omega^{n}$.
A Gerstenhaber algebra $\mc G$ is called $\mb Z_+$-graded,
with grading $\mc G=\bigoplus_{n\in\mb Z_+}\mc G_n$,
if $\mc G_m\wedge\mc G_n\subset\mc G_{m+n}$ and $[\mc G_m,\mc G_n]\subset\mc G_{m+n-1}$.
A $\mb Z_+$-graded calculus structure of a $\mb Z_+$-graded Gerstenhaber algebra $\mc G$
on a $\mb Z_+$-graded complex is, by definition, a calculus structure 
such that $\iota(\mc G_m\times\Omega^n)\subset\Omega^{n-m}$
and $L(\mc G_m\times\Omega^n)\subset\Omega^{n-m+1}$.

\begin{example}\label{0301:ex}
If $(\mf g,A)$ is a Lie superalgebroid, then $S_A(\Pi\mf g)$ is a $\mb Z_+$-graded
Gerstenhaber algebra with the usual $\mb Z_+$-grading of the symmetric algebra.
Moreover, suppose we have a $(\mf g,A)$-complex $(\Omega,d)$, such that
$(\Omega,d)$ is a $\mb Z_+$-graded complex, and $\iota_f(\Omega^n)\subset\Omega^n$ for all $f\in A$
and $\iota_X(\Omega^n)\subset\Omega^{n-1}$ for all $X\in\Pi\mf g$.
Then the corresponding calculus structure $(S_A(\Pi\mf g),\Omega)$ given by Theorem \ref{0228:th}
is $\mb Z_+$-graded.
\end{example}

\begin{remark}

Given a $\mb Z$-graded vector superspace $V=\bigoplus_{n\in\mb Z}V^n$,
with parity $p$ and degree $\deg$,
we let $\Pi V$ be the $\mb Z$-graded vector superspace 
with opposite parity: $\bar p(v)=p(v)+\bar1$,
and with degree shifted by 1: $\overline\deg(v)=\deg(v)+1$.
In other words, $\Pi V=\bigoplus_{n\in\mb Z}(\Pi V)^n$, 
where $(\Pi V)^n=\Pi(V^{n-1})$.
Using this notation, if $(\Omega,d)$ is a $\mb Z_+$-graded complex, 
it means that $d$ is a parity preserving linear map of degree zero
from $\Omega$ to $\Pi\Omega$.
Moreover, if we have a $\mb Z_+$-graded Gerstenhaber algebra $\mc G=\bigoplus_{n\in\mb Z_+}\mc G_n$,
we consider it as a $\mb Z_-$-graded superspace $\mc G=\bigoplus_{n\in\mb Z_-}\mc G^n$,\
by letting $\mc G^n=\mc G_{-n}$.
With this notation, $(\mc G,\wedge)$ is a $\mb Z_-$-graded commutative associative superalgebra,
and $(\Pi\mc G,[\cdot,\cdot])$ is a $\mb Z_-$-graded Lie superalgebra.
Moreover, in a $\mb Z_+$-graded calculus structure $(\mc G,\Omega)$,
both the contraction map $\iota_\cdot:\,\mc G\times\Omega\to\Omega$
and the Lie derivative $L_\cdot:\Pi\mc G\times\Omega\to\Omega$
become parity preserving maps of degree zero.
\end{remark}
\newpage

\subsection{Morphisms of calculus structures}

\begin{definition}\label{0321:def}
A \emph{morphism} $(\mc G,\Omega)\to (\mc G^\prime,\Omega^\prime)$
of a calculus structure $(\mc G,\Omega)$ to a calculus structure $(\mc G^\prime,\Omega^\prime)$
is a Gerstenhaber algebra homomorphism $\Phi:\,\mc G^\prime\to\mc G$,
together with a homomorphism of complexes $\Psi:\,\Omega\to\Omega^\prime$,
such that, for $X^\prime\in\mc G^\prime$ and $\omega\in\Omega$, we have
\begin{equation}\label{0321:eq1}
\iota_{X^\prime}\,\Psi(\omega)=\Psi(\iota_{\Phi(X^\prime)}\omega)\,.
\end{equation}
Note that, by Cartan's formula, equation \eqref{0321:eq1} holds if the contraction $\iota_\cdot$
is replaced by the Lie derivative $L_\cdot$.

A morphism of $\mb Z_+$-graded calculus structures is one that preserves the $\mb Z_+$-gradings.
\end{definition}

\begin{example}\label{0321:ex}
Let $\mc{G}$ be a Gerstenhaber algebra with a calculus structure on the complex $(\Omega,d)$.
Let $\partial$ be an even endomorphism of the superspace $\Omega$, such that $[d,\partial]=0$. 
Let $\mc{G}^\partial\,=\,\big\{X\in\mc{G}\,\big|\,[\iota_X,\partial]=0\big\}\subset\mc{G}$.
Notice that, by Cartan's formula, $[L_X,\partial]=0$ for all $X\in\mc{G}^\partial$.
It follows that $\mc{G}^\partial$ is a subalgebra 
of the Gerstenhaber algebra $(\mc{G},\wedge,[\cdot,\cdot])$,
and that $(\partial\Omega,d)$ is a subcomplex of $(\Omega,d)$,
such that $\partial\Omega$ is a submodule over the Gerstenhaber algebra $\mc{G}^\partial$.
We can thus consider the quotient $\mc{G}^\partial$-module $\Omega/\partial \Omega$.
This defines an induced calculus structure of the Gerstenhaber algebra $\mc{G}^\partial$
on the complex $(\Omega/\partial \Omega,d)$, which is called the \emph{reduced calculus structure}.
We have the obvious morphism of calculus structures
$(\mc G,\Omega)\to (\mc G^\partial,\Omega/\partial\Omega)$
given by the inclusion map of $\mc G^\partial\to\mc G$ 
and the quotient map $\Omega\to\Omega/\partial\Omega$.
\end{example}


\section{Calculus structure on the Lie algebra complex}
\label{sec:3}

\subsection{Discrete case}

Let $\mf{g}$ be a Lie algebra and $A$ a $\mf{g}$-module, 
endowed with the structure of a unital commutative associative algebra,
on which $\mf{g}$ acts by derivations.
By Example \ref{0219:ex1}, we have a Lie algebroid $(A\otimes\mf g,A)$.
Hence, by Proposition \ref{schouten} and Example \ref{0301:ex}, 
we have a $\mb Z_+$-graded Gerstenhaber algebra $S_A(\Pi(A\otimes\mf g))$,
which we denote by 
$\Delta_\bullet=\Delta_\bullet(\mf g,A)=\bigoplus_{h\in\mb Z_+}\Delta_h(\mf g,A)$.
Note that we have the canonical isomorphism 
$\Delta_\bullet(\mf g,A)
=A\otimes S(\Pi\mf g)=A\otimes\bigwedge\mf g$
(the latter identity is only of vector spaces, not superspaces,
and $A\otimes\bigwedge\mf g$ is considered as a vector superspace with induced
$\mb Z/2\mb Z$-grading).
We shall call this the \emph{superspace of chains}.

Dualizing, the \emph{superspace of cochains} is the $\mb Z_+$-graded vector superspace
$\Delta^\bullet(\mf g,A)=\Hom_A(\Delta_\bullet(\mf g,A),A)=\bigoplus_{k\in\mb Z_+}\Delta^k(\mf g,A)$,
where the $k$-th component is $\Delta^k(\mf g,A)=\Hom_A(\Delta_k(\mf g,A),A)$.
Again, this is the same as the traditional definition due to the canonical
isomorphism of vector spaces
$\Delta^k(\mf g,A)=\Hom_A(A\otimes\bigwedge^k\mf g,A)=\Hom_{\mb F}(\bigwedge^k\mf g,A)$.
Again, we consider the latter as a vector superspace with induced
$\mb Z/2\mb Z$-grading.
The superspace $\Delta^\bullet(\mf g,A)$ is a $\mb Z_+$-graded complex
with differential $d:\,\Delta^\bullet(\mf g,A)\to\Delta^\bullet(\mf g,A)$,
defined by the usual formula, see e.g. \cite{F} ($X_i\in\mf g$):
\begin{equation}\label{0301:eq1}
\begin{array}{c}
\displaystyle{
(d\omega)(X_1\wedge\dots\wedge X_{k+1})
= \sum_{i=1}^{k+1}(-1)^{i+1} X_i\big(\omega(X_1\wedge\stackrel{i}{\check{\cdots}}\wedge X_{k+1})\big)
}\\
\displaystyle{
+ \sum_{\substack{i,j=1\\i<j}}^{k+1}(-1)^{i+j} 
\omega([X_i,X_j]\wedge X_1\wedge\stackrel{i}{\check{\cdots}}\stackrel{j}{\check{\cdots}}\wedge X_{k+1})
\,.
}
\end{array}
\end{equation}

We next define 
a structure of a $(A\otimes\mf g,A)$-complex on $(\Delta^\bullet(\mf g,A),d)$,
see Definition \ref{0220:def1}(b).
For $f\in A$, we let $\iota_f\in\,\End(\Delta^\bullet(\mf g,A))$ be the 
multiplication by $f$, given by the obvious
left $A$-module structure on $\Delta^\bullet(\mf g,A)=\Hom_{\mb F}(\bigwedge\mf g,A)$.
For $X\in\mf g$, we let the contraction operator 
$\iota_X:\,\Delta^k(\mf g,A)\to\Delta^{k-1}(\mf g,A)=\Hom_{\mb F}(\bigwedge^{k-1}\mf g,A)$,
be zero for $k=0$, and, for $k\geq1$, be given by
\begin{equation}\label{0302:eq1}
\begin{array}{c}
(\iota_X\omega)(Y)=\omega(X\wedge Y)\,\,,\qquad Y
\in \bigwedge^{k-1}\mf g\,.
\end{array}
\end{equation}
We then use Cartan's formula \eqref{comp3} to define, for $f\in A$,
$L_f:\,\Delta^k(\mf g,A)\to\Delta^{k+1}(\mf g,A)$,
and, for $X\in\mf g$, $L_X:\,\Delta^k(\mf g,A)\to\Delta^k(\mf g,A),\,k\in\mb Z_+$.
We have ($X_i\in\mf g$):
\begin{equation}\label{0302:eq2}
\begin{array}{rcl}
(L_f\omega)(X_1\wedge\cdots\wedge X_{k+1})
&=& 
\displaystyle{
\sum_{i=1}^{k+1}(-1)^i X_i(f) \omega(X_1\wedge\stackrel{i}{\check{\cdots}}\wedge X_{k+1})\,,
}\\
(L_X\omega)(X_1\wedge\cdots\wedge X_k)
&=& \displaystyle{
X\big(\omega(X_1\wedge\cdots\wedge X_k)\big) 
}\\
&&\displaystyle{
- \sum_{i=1}^{k} \omega(X_1\wedge\cdots[X,X_i]\cdots\wedge X_{k+1})\,.
}
\end{array}
\end{equation}

\begin{theorem}\label{0302:th}
\begin{enumerate}[(a)]
\item
The operators $\iota_f,\,f\in A$, and $\iota_X,\,X\in\mf g$, define a structure 
of a $(A\otimes\mf g,A)$-complex on $(\Delta^\bullet(\mf g,A),d)$.
\item
This extends uniquely to a $\mb Z_+$-graded 
calculus structure of the Gerstenhaber algebra $\Delta_\bullet(\mf g,A)$
on the complex $\Delta^\bullet(\mf g,A)$.
\end{enumerate}
\end{theorem}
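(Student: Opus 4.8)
The plan is to reduce part (b) entirely to part (a) via the machinery already built, so that the only real work is part (a): verifying that the operators $\iota_f$ and $\iota_X$ of \eqref{0302:eq1} define a genuine $(A\otimes\mf g,A)$-complex structure on $(\Delta^\bullet(\mf g,A),d)$, in the sense of Definition \ref{0220:def1}(b) for the Lie algebroid $(A\otimes\mf g,A)$ of Example \ref{0219:ex1}. Granting part (a), part (b) is immediate: Theorem \ref{0228:th} says any $(\mf g',A)$-complex extends uniquely to a calculus structure of the Gerstenhaber algebra $S_A(\Pi\mf g')$, which here is exactly $S_A(\Pi(A\otimes\mf g))=\Delta_\bullet(\mf g,A)$; and Example \ref{0301:ex} guarantees this calculus structure is $\mb Z_+$-graded, since by \eqref{0302:eq1} we have $\iota_f(\Delta^n)\subset\Delta^n$ and $\iota_X(\Delta^n)\subset\Delta^{n-1}$. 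So the heart of the matter is entirely part (a).

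For part (a), rather than checking the full list of Definition \ref{0220:def1}(b) axioms for $A\otimes\mf g$ directly, I would invoke Lemma \ref{0302:lem}, which is tailor-made for precisely this situation: it extends a $\mf g$-complex together with a left $A$-module structure to an $(A\otimes\mf g,A)$-complex, provided three compatibility conditions hold. First I would verify that $\iota_\cdot:\Pi\mf g\to\End(\Delta^\bullet(\mf g,A))$ defined by \eqref{0302:eq1} is a $\mf g$-complex structure, i.e. conditions $(i)$ $[\iota_X,\iota_Y]=0$ and $(ii)$ $[[\iota_X,d],\iota_Y]=\iota_{[X,Y]}$ from the definition preceding Lemma \ref{0220:lem1}. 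Condition $(i)$ follows because $\iota_X\iota_Y\omega$ and $\iota_Y\iota_X\omega$ both evaluate $\omega$ on $X\wedge Y\wedge(-)$ and $Y\wedge X\wedge(-)$, which differ by the sign built into the wedge; the anticommutator (recall $p(\iota_X)=\bar1$ here since $\mf g$ is purely even) vanishes. Condition $(ii)$ is the standard verification that Cartan's formula reproduces the Lie coboundary $d$ of \eqref{0301:eq1}; concretely $[\iota_X,d]=L_X$ is the Lie derivative of \eqref{0302:eq2}, and one then checks $[L_X,\iota_Y]=\iota_{[X,Y]}$ by a direct evaluation on $X_1\wedge\cdots\wedge X_{k-1}$. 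This last identity is the familiar infinitesimal-naturality relation $L_X\iota_Y-\iota_YL_X=\iota_{[X,Y]}$ and is the one genuinely combinatorial computation.

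Then I would check the three hypotheses of Lemma \ref{0302:lem}. Condition $(i)$, $[\iota_X,\iota_f]=0$, holds because $\iota_f$ is multiplication by $f\in A$ in the target and $\iota_X$ merely reindexes the argument, so the two operators manifestly commute. Condition $(ii)$, $[L_f,\iota_g]=0$ for $f,g\in A$, follows since $L_f=[\iota_f,d]$ and both $\iota_f,\iota_g$ are $A$-linear multiplications, so one computes $[[\iota_f,d],\iota_g]=0$ using $[\iota_f,\iota_g]=0$ and $d$'s Leibniz behavior over $A$. Condition $(iii)$, $[L_X,\iota_f]=\iota_{X(f)}$, is the translation of the fact that the Lie derivative $L_X$ of \eqref{0302:eq2} differentiates the $A$-valued coefficients via the $\mf g$-action $X(f)$; this is read off directly from the first and second lines of \eqref{0302:eq2}. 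The main obstacle, and the step I expect to consume the most care, is the sign- and index-bookkeeping in condition $(ii)$ of the $\mf g$-complex, namely confirming that \eqref{0301:eq1} and \eqref{0302:eq1} are compatible so that Cartan's formula yields precisely \eqref{0302:eq2} and the naturality relation $[L_X,\iota_Y]=\iota_{[X,Y]}$ holds with the correct signs; everything else is formal once this is in place.
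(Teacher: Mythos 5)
Your proposal is correct and follows essentially the same route as the paper's own proof: the paper likewise first checks that the $\iota_X$ give a $\mf g$-complex structure (via the second formula in \eqref{0302:eq2}), then verifies hypotheses $(i)$--$(iii)$ of Lemma \ref{0302:lem} exactly as you do, and obtains part (b) from Theorem \ref{0228:th} together with Example \ref{0301:ex}. The only cosmetic difference is that for $[L_f,\iota_g]=0$ the cleanest justification is simply the explicit first formula in \eqref{0302:eq2} (multiplication operators commute since $A$ is commutative), rather than any appeal to Leibniz behavior of $d$ over $A$.
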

\begin{proof}
First, we check that the contraction maps $\iota_X,\,X\in\mf g$,
define a structure of a $\mf g$-complex on $\Delta^\bullet(\mf g,A)$.
This is straightforward, using the second formula in \eqref{0302:eq2}.
Then, in order to prove that we have a structure 
of a $(A\otimes\mf g,A)$-complex on $(\Delta^\bullet(\mf g,A),d)$,
it suffices to check conditions $(i)$, $(ii)$ and $(iii)$ in Lemma \ref{0302:lem}.
Condition $(i)$ is clear, condition $(ii)$ follows from the first formula in \eqref{0302:eq2},
and condition $(iii)$ follows from the second formula in \eqref{0302:eq2}.
This proves part (a).
For part (b), by Theorem \ref{0228:th} the structure of a $(A\otimes\mf g,A)$-complex 
on $(\Delta^\bullet(\mf g,A),d)$ extends 
to a calculus structure $(\Delta_\bullet(\mf g,A),\Delta^\bullet(\mf g,A))$,
and, by Example \ref{0301:ex}, this calculus structure is $\mb Z_+$-graded.
\flushright\qed
\end{proof}

It is not hard to find a general formula for the contraction operator and the Lie derivative
for an arbitrary element 
$f\otimes X=f\otimes X_1\wedge\cdots\wedge X_h\in A\otimes\bigwedge^h\mf g=\Delta_h(\mf g,A)$.
Let $\omega\in\Delta^k(\mf g,A)=\Hom_{\mb F}(\bigwedge^k\mf g,A)$.
If $h\leq k$ and $Y\in\bigwedge^{k-h}\mf g$,
we have, from \eqref{0222:eq1},
\begin{equation}\label{0304:eq2}
(\iota_{f\otimes X}\omega)(Y) = (-1)^{\frac{h(h-1)}{2}}f \omega(X\wedge Y) \, .
\end{equation}
If $h\leq k+1$ and $Y=X_{h+1}\wedge\cdots\wedge X_{k+1}\in\bigwedge^{k+1-h}\mf g$, we have, from \eqref{0304:eq1},
\begin{equation}\label{0304:eq3}
\begin{array}{c}
\displaystyle{
(L_{f\otimes X}\omega)(Y)
=
(-1)^{\frac{h(h-1)}2}\bigg(
\sum_{i=h+1}^{k+1} (-1)^{i} X_i(f) 
\omega(X_1\wedge\stackrel{i}{\check{\cdots}}\wedge X_{k+1}) 
}\\
\displaystyle{
- \sum_{i=1}^h (-1)^{i}
f X_i\big(\omega(X_1\wedge\stackrel{i}{\check{\cdots}}\wedge X_{k+1})\big) 
}\\
\displaystyle{
+ \sum_{i=1}^h\sum_{j=i+1}^{k+1} (-1)^{i} 
f \omega(X_1\wedge\stackrel{i}{\check{\cdots}}\wedge[X_i,X_j]\wedge X_{k+1})
\bigg)\,.
}
\end{array}
\end{equation}

\subsection{Linearly compact case}\label{sec:3.2}

In this subsection we shall assume that $\mf g$ is a Lie algebra with a linearly compact topology,
acting continuously by derivations on a unital commutative associative algebra $A$,
with discrete topology.
For a definition of linearly compact space and relevant properties
which we shall use, see e.g. \cite{G,CK}.

Recall that a {\em linear topology} on a vector space $V$ over $\mb F$
is a topology for which there exists a fundamental system of neighborhoods
of zero, consisting of vector subspaces of $V$. A vector space
$V$ with linear topology is called {\em linearly compact} if there
exists a fundamental system of neighborhoods of zero, consisting of subspaces
of finite codimension in $V$ and, in addition, $V$ is complete in this
topology (equivalently, if $V$ is a topological direct product of some number
of copies of $\mb F$ with discrete topology).
If $\varphi: V \rightarrow U$ is a continuous map
of vector spaces with linear topology and $V$ is linearly
compact, then $\varphi(V)$ is linearly compact.
The basic examples of linearly compact spaces are finite-dimensional
vector spaces with the discrete topology, and the space of formal
power series $V[[x_1, \dots, x_m]]$ over a finite-dimensional vector 
space $V$, with
the topology defined by taking as a fundamental system of
neighborhoods of $0$ the subspaces $\{x_1^{j_1}\dots x_m^{j_m}
V[[x_1, \dots, x_m]]\}_{(j_1, \dots, j_m)\in\mb Z_+^m}$. 

Let $U$ and $V$ be two vector spaces with linear topology, and let
$\Hom^c_{\mb F}(U,V)$ denote the space of all continuous linear maps from $U$ to $V$.
We endow the vector space $\Hom^c_{\mb F}(U,V)$ with a ``compact-open'' linear 
topology, the fundamental system of neighborhoods of zero
being $\{\Omega_{K,W}\}$, where $K$ runs over all linearly compact
subspaces of $U$, $W$ runs over all open subspaces of $V$,
and 
$\Omega_{K,W}=\{\varphi\in \Hom^c_{\mb F}(U,V)~|~\varphi(K)\subset W\}$.
In particular, we define the
dual space of $V$ as $V^*=\Hom^c_{\mb F}(V,\mb F)$, where $\mb F$ is endowed with the 
discrete topology. Then $V$ is linearly compact
if and only if $V^*$ is discrete. Note also that a discrete space $V$
is linearly compact if and only if $\dim V<\infty$.
The tensor product of two vector spaces $U$ and $V$ with linear topology
is defined as
$$
U\hat{\otimes} V=\Hom{}^c_{\mb F}(U^*, V)\,.
$$
Thus, $U\hat{\otimes} V=U\otimes V$ if both $U$ and $V$ are discrete
and $U\hat{\otimes} V=(U^*\otimes V^*)^*$ if both $U$ and $V$ are
linearly compact. Hence the tensor product of linearly compact spaces
is linearly compact.

We can construct a ``topological'' calculus structure 
$(\hat\Delta_\bullet(\mf g,A),\check\Delta^\bullet(\mf g,A))$,
associated to the Lie algebra $\mf g$ and its representation on $A$.

We let $\hat\Delta_\bullet(\mf g,A)=\bigoplus_{h\in\mb Z_+}\hat\Delta_h(\mf g,A)$,
where $\hat\Delta_h(\mf g,A)=A\hat\otimes\hat\bigwedge^h\mf g$,
where $\hat\bigwedge^h\mf g$ is the quotient of $\mf g^{\hat\otimes h}$
by the usual skewsymmetry relations.
Clearly $\Delta_\bullet(\mf g,A)$ is a subspace of $\hat\Delta_\bullet(\mf g,A)$,
and its Gerstenhaber algebra structure,
given by the wedge product and formulas \eqref{0225:eq5} and \eqref{0225:eq6} for the Lie bracket,
extends by continuity to the whole $\hat\Delta_\bullet(\mf g,A)$.

Next, we let $\check\Delta^\bullet(\mf g,A)=\bigoplus_{k\in\mb Z_+}\check\Delta^k(\mf g,A)$,
where $\check\Delta^k(\mf g,A)=\Hom^c_{\mb F}(\hat\bigwedge^k\mf g,A)$.
Since $\bigwedge^k\mf g$ is dense in $\hat\bigwedge^k\mf g$, 
a continuous linear map from $\hat\bigwedge^k\mf g$ to $A$ is determined by its
restriction to $\bigwedge^k\mf g$.
Hence, we can view
$\check\Delta^\bullet(\mf g,A)$ as a subspace of $\Delta^\bullet(\mf g,A)$.
One easily checks that the differential $d$ on $\Delta^\bullet(\mf g,A)$
leaves this subspace invariant, giving it a structure of a $\mb Z_+$-graded complex.

Finally, it is easy to check that
the calculus structure $(\Delta_\bullet(\mf g,A),\Delta^\bullet(\mf g,A))$
extends, by continuity, to a well defined calculus structure
$(\hat\Delta_\bullet(\mf g,A),\check\Delta^\bullet(\mf g,A))$,
and the inclusion maps 
$\Delta_\bullet(\mf g,A)\to\hat\Delta_\bullet(\mf g,A),\,
\check\Delta^\bullet(\mf g,A)\to\Delta^\bullet(\mf g,A)$,
define a morphism of calculus structures.


\section{Calculus structure on the Lie conformal algebra complex}
\label{sec:4}

\subsection{Preliminaries on Lie conformal algebras and their modules}
\label{sec:4.1}

In this section we review basic properties of Lie conformal algebras and their modules, 
following \cite{K}.

\begin{definition} 
A \emph{Lie conformal algebra} $R$ is an $\mb F[\partial]$-module equipped 
with a $\lambda$-\emph{bracket}, that is 
an $\mb F$-linear map $R\otimes R \to \mb F[\lambda]\otimes R$, 
denoted $a\otimes b\mapsto[a_\lambda b]$,
satisfying the following relations ($a,b,c\in R$):
\begin{description}
\item[\emph{(sesquilinearity)}]
\ \ \ \ \ \  $[\partial a_\lambda b] = -\lambda [a_\lambda b]\,,\,\,
[a_\lambda \partial b] = (\partial + \lambda) [a_\lambda b]$,
\item[\emph{(skewcommutativity)}] 
 $[a_\lambda b] = -[b_{-\lambda-\partial}a]$, where $\partial$ is moved to the left,
\item[\emph{(Jacobi identity)}] 
\ \ \ \ \ $[a_\lambda [b_\mu c]]-[b_\mu[a_\lambda c]] = [[a_\lambda b]_{\lambda + \mu}c]$.
\end{description}
\end{definition}
One writes 
$[a_\lambda  b] = \sum_{j\in \mb Z_+} \frac{\lambda^j}{j!} \left(a_{(j)}b\right)$,
where the sum is finite; the bilinear products $a_{(j)}b$ are called $j^{\texttt{th}}$-products.

\begin{definition} \label{lca-mod}
A \emph{module} $M$ over a Lie conformal algebra $R$ is an $\mb F[\partial]$-module,
with action of $\partial$ denoted by $\partial^M$, endowed with a 
$\lambda$-action $R\otimes M \to\mb F[\lambda]\otimes M$,
denoted $a\otimes m\mapsto a_\lambda m$, such that 
\begin{enumerate}[(i)]
\item $(\partial a)_\lambda m = -\lambda a_\lambda m\,,\,\, 
a_\lambda(\partial^M m) = (\partial^M + \lambda) (a_\lambda m)$.
\item $a_\lambda(b_\mu m) - b_\mu(a_\lambda m) = [a_\lambda b]_{\lambda + \mu}m$.
\end{enumerate}
\end{definition}
\begin{remark}\label{100310:rem}
If $R$ is a Lie conformal algebra, then the torsion $\Tor R$ 
of the $\mb F[\partial]$-module $R$ is in the center of the Lie conformal algebra $R$,
and moreover its $\lambda$-action on any $R$-module $M$ is trivial.
Indeed, for $a\in R$ and $P(\partial)\in\mb F[\partial]$, we have
$(P(\partial)a)_\lambda=P(-\lambda)a_\lambda$.
Hence, if $a\in\Tor R$, i.e.\ $P(\partial)a=0$ with $P\neq0$, we get that $a_\lambda=0$
on any $R$-module.
\end{remark}

Recall that the \emph{annihilation Lie algebra} associated to the Lie conformal algebra $R$ is
$$
\Lie{}_- R
=
R[[t]]/(\partial + \partial_t)R[[t]]\,,
$$ 
with the well defined Lie bracket 
\begin{equation} \label{commutator}
[a_m, b_n]
= \sum_{j\in\mb Z_+}\binom{m}{j}(a_{(j)}b)_{m+n-j}\, ,\,\, a,b\in R, \ m,n\in \mb Z_+ \,, 
\end{equation}
where $a_n,\,n\in\mb Z_+$, denotes the image of $at^n$ in $\Lie_-\!\! R$. 
Letting
$a_\lambda=\sum_{n\in\mb Z_+} \frac{\lambda^n}{n!} a_n$,
formula \eqref{commutator} is equivalent to $[a_\lambda,b_\mu]=[a_\lambda b]_{\lambda+\mu}$,
which is equivalent to the Jacobi identity.
Moreover, the identity $(\partial a)_n =\,-na_{n-1}$, which holds on $\Lie_- R$,
is equivalent to $(\partial a)_\lambda=-\lambda a_\lambda$, which is 
the first sesquilinearity relation.
Note also that, if $a\in\Tor R$, then $a_\lambda=0$.
This follows by the same argument as in Remark \ref{100310:rem}.

The action of $\partial$ on $R$ induces a derivation of the 
Lie algebra $\Lie_- R$, by
$\partial(a_n)\,=\,(\partial a)_n$,
which we still denote by $\partial$,
and we may consider the semidirect product $\Lie_- R\rtimes\mb F\partial$.
A module $M$ over the annihilation Lie algebra $\Lie_- R$,
or over the Lie algebra $\Lie_- R\rtimes\mb F\partial$,
is called \emph{conformal} if, 
for any $a\in R$ and $v\in M$, we have $a_n(v)=0$ for $n$ sufficiently large.

\begin{proposition}\label{100310:prop} 
A module over a Lie conformal algebra $R$ is the same as a conformal module over 
the Lie algebra $\Lie_- R\rtimes\mb F\partial$.
\end{proposition}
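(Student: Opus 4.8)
The plan is to establish a bijective correspondence between the two notions by unwinding the definitions and checking that the respective axioms match. Recall that a module $M$ over the Lie conformal algebra $R$ carries the action of $\partial$ (denoted $\partial^M$) together with a $\lambda$-action $a\otimes m\mapsto a_\lambda m=\sum_{n\in\mb Z_+}\frac{\lambda^n}{n!}(a_{(n)}m)$, subject to the sesquilinearity and compatibility relations $(i)$ and $(ii)$ of Definition \ref{lca-mod}. On the other side, a conformal module over $\Lie_-R\rtimes\mb F\partial$ is a representation of this Lie algebra in which $a_n(v)=0$ for $n\gg0$, so that one may again assemble the generating series $a_\lambda(v)=\sum_{n\in\mb Z_+}\frac{\lambda^n}{n!}a_n(v)$, which is now a well-defined polynomial in $\lambda$ precisely because of the conformality (locality) condition. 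First I would declare the obvious identification of underlying $\mb F[\partial]$-modules, using the derivation $\partial$ on $\Lie_-R\rtimes\mb F\partial$ to supply $\partial^M$, and set $a_{(n)}m:=a_n(v)$.

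The heart of the argument is then a dictionary between the axioms. I would verify that the first sesquilinearity relation $(\partial a)_\lambda m=-\lambda a_\lambda m$ corresponds exactly to the identity $(\partial a)_n=-na_{n-1}$ already noted to hold in $\Lie_-R$, after multiplying by $\frac{\lambda^n}{n!}$ and summing; the passage between $\sum\frac{\lambda^n}{n!}(-na_{n-1})$ and $-\lambda\sum\frac{\lambda^m}{m!}a_m$ is a routine reindexing. The second sesquilinearity relation $a_\lambda(\partial^Mm)=(\partial^M+\lambda)(a_\lambda m)$ should fall out of the fact that $\partial^M$ is the action of the element $\partial$ in the semidirect product together with the bracket relation $[\partial,a_n]=(\partial a)_n=-na_{n-1}$; this is the step where the semidirect factor $\mb F\partial$ is genuinely used rather than just $\Lie_-R$. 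Finally, compatibility $(ii)$, namely $a_\lambda(b_\mu m)-b_\mu(a_\lambda m)=[a_\lambda b]_{\lambda+\mu}m$, should be exactly the generating-series form of the module analogue of the commutator formula \eqref{commutator}, matching the representation property $[a_m,b_n]$ acting on $v$ against $\sum_j\binom{m}{j}(a_{(j)}b)_{m+n-j}$. I would carry these three checks out in this order (the two sesquilinearity relations, then Jacobi-type compatibility), each reducing to the equivalence already recorded in the text between $n$-indexed and $\lambda$-indexed formulations.

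The reverse direction is symmetric: given an $R$-module $M$, one defines operators $a_n$ on $M$ by extracting the coefficients of $a_\lambda m$, checks via the same dictionary that these satisfy the bracket relations of $\Lie_-R\rtimes\mb F\partial$ and hence furnish a representation, and observes that the conformality condition holds automatically because $a_\lambda m$ lies in $\mb F[\lambda]\otimes M$, i.e.\ is polynomial, so only finitely many $a_n(v)$ are nonzero. I would then remark that these two constructions are mutually inverse, completing the equivalence.

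The main obstacle, and the step deserving the most care, is the second sesquilinearity relation and its interaction with the semidirect product structure: one must confirm that the derivation $\partial$ acting as $\partial(a_n)=(\partial a)_n$ is consistent with using $\partial^M$ both as the module operator on $M$ and as a Lie algebra element acting in the representation, so that the relation $a_\lambda(\partial^Mm)=(\partial^M+\lambda)(a_\lambda m)$ holds without a sign or indexing discrepancy. Equally, one should be attentive that the conformality (finiteness) hypothesis is exactly what is needed to pass back and forth between the polynomial $\lambda$-action and the family $\{a_n\}$, since without it the generating series need not define an element of $\mb F[\lambda]\otimes M$. Everything else reduces to the reindexing identities already flagged as equivalent in the surrounding discussion.
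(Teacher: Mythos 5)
Your proposal is correct and follows essentially the same route as the paper: both arguments unwind the definitions via the generating series $a_\lambda v=\sum_{n\in\mb Z_+}\frac{\lambda^n}{n!}\,a_nv$, matching the sesquilinearity axioms against the identity $(\partial a)_n=-na_{n-1}$ and the semidirect-product bracket $[\partial,a_n]=(\partial a)_n$, matching axiom $(ii)$ against the commutator formula \eqref{commutator}, and matching conformality against polynomiality of the $\lambda$-action. The paper's proof is simply a terser version of this, leaving implicit the dictionary you spell out (which it had already recorded in the remarks preceding the proposition).
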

\begin{proof}
To give a vector space $M$ a structure of an $R$-module means to provide 
an operator $\partial^M$ on $M$ and, for each $a\in R$,
a sequence of operators $a_n, n\in\mb Z_+,$ on $M$, such that
the map $R\otimes M\to\mb F[\lambda]\otimes M$ given by
\begin{equation}\label{0606:eq1}
a_\lambda v = \sum_{n\in\mb Z_+} \frac{\lambda^n}{n!}\,a_n v, \ v\in M \,,
\end{equation}
satisfies relations $(i)$ and $(ii)$ in Definition \ref{lca-mod}.
But this is exactly the same as giving $M$ a structure of a conformal module 
over $\Lie_- R\rtimes\mb F\partial$, with $\partial$ acting as $\partial^M$.
\flushright\qed
\end{proof}

If the Lie conformal algebra $R$ is of finite rank as an $\mb F[\partial]$-module,
then the Lie algebra $\Lie_- R$ can be endowed with a linearly compact topology.
In fact, if we decompose $R=(\mb F[\partial]\otimes U)\oplus\Tor$,
where $U$ is a finite dimensional vector space,
then $\Lie_- R$ is (non canonically) isomorphic to $U[[t]]$ as a vector space,
and it has the usual formal power series topology,
which makes it a linearly compact Lie algebra.
In this case, to say that a $\Lie_-R$-module $M$ is conformal 
is the same as to say that
it is a continuous module, when we endow it with the discrete topology.

\subsection{The Lie conformal algebra cochain complex $(C^\bullet(R,M),d)$}
\label{sec:4.2}

For $k\in\mb Z_+$, denote by $\mb F_{-}[\lambda_1,\dots,\lambda_k]$
the space of polynomials in the $k$ variables $\lambda_1,\dots,\lambda_k$
with the $\mb F[\partial]$-module structure obtained by  
letting $\partial$ act by multiplication by $-(\lambda_1+\cdots+\lambda_k)$.

Let $R$ be a Lie conformal algebra and let $M$ be an $R$-module.
We define the space of $k$-cochains $C^k(R,M),\,k\in\mb Z_+$, as
the space of $\mb F$-linear maps
$$
\begin{array}{rcl}
c\,&:&\,\, R^{\otimes k}\,\,\,\longrightarrow\,\,\,\mb F_-[\lambda_1,\dots,\lambda_k]
\otimes_{\mb F[\partial]} M
\,,\\
&& a_1\otimes\cdots\otimes a_k\mapsto c_{\lambda_1,\cdots,\lambda_k}(a_1,\cdots,a_k)\,,
\end{array}
$$
satisfying the following conditions:
\begin{description}
\item[(sesquilinearity)] 
$c_{\lambda_1,\cdots,\lambda_k}(a_1,\cdots,\partial a_i,\cdots,a_k)
=-\lambda_ic_{\lambda_1,\cdots,\lambda_k}(a_1,\cdots,a_k)$,
\item[(skewsymmetry)]
$c_{\lambda_{\sigma(1)},\cdots,\lambda_{\sigma(k)}}(a_{\sigma(1)},\cdots,a_{\sigma(k)})
=\sign(\sigma)c_{\lambda_1,\cdots,\lambda_k}(a_1,\cdots,a_k)$,
for all permutations $\sigma\in S_k$.
\end{description}
For example, $C^0(R,M)=M/\partial M$ and $C^1(R,M)=\Hom_{\mb F[\partial]}(R,M)$.
We let $C^\bullet(R,M)$ be the $\mb Z_+$-graded vector superspace $\bigoplus_{k\in\mb Z_+}C^k(R,M)$,
with the parity induced by the $\mb Z_+$-grading.

We make $C^\bullet(R,M)$ into a $\mb Z_+$-graded complex by letting the differential
$d:\,C^k(R,M)\to C^{k+1}(R,M),\,k\in\mb Z_+$, defined by the following formula:
\begin{equation}\label{100311:eq1}
\begin{array}{c}
\displaystyle{
(dc)_{\lambda_1,\cdots,\lambda_{k+1}}(a_1,\cdots,a_{k+1})
= \sum_{i=1}^{k+1}(-1)^{i+1}
{a_i}_{\lambda_i}\big(
c_{\lambda_1,\stackrel{i}{\check{\cdots}},\lambda_{k+1}}(a_1,\stackrel{i}{\check{\cdots}},a_{k+1})
\big) 
}\\
\displaystyle{
+ \sum_{1\leq i<j\leq k+1}(-1)^{j+1}
c_{\lambda_1,\cdots \lambda_i+\lambda_j\stackrel{j}{\check{\cdots}},\lambda_{k+1}}
(a_1,\cdots[{a_i}_{\lambda_i}a_j]\stackrel{j}{\check{\cdots}},a_{k+1})\,.
}
\end{array}
\end{equation}
For example, if $\tint v\in M/\partial M=C^0(R,M)$, we have
$\big(d\tint v\big)_\lambda(a)=a_\lambda v= a_{-\partial^M}v
\in \mb F_-[\lambda]\otimes_{\mb F[\partial]}M$,
where $\partial^M$ is moved to the left.
Here, as usual in the variational calculus, $\tint v$ denotes the coset of $v$ in $M/\partial M$.
\begin{proposition}\label{100311:prop1}
Formula \eqref{100311:eq1} gives a well defined map $d:\,C^k(R,M)\to C^{k+1}(R,M)$,
and $d^2=0$.
Hence, $C^\bullet(R,M)$ is a $\mb Z_+$-graded complex.
\end{proposition}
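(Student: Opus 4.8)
The plan is to verify two things: first that formula \eqref{100311:eq1} actually lands in $C^{k+1}(R,M)$, i.e.\ that the right-hand side satisfies sesquilinearity and skewsymmetry in its arguments, and second that $d^2=0$. For the well-definedness, I would treat sesquilinearity and skewsymmetry separately. \emph{Sesquilinearity:} I would fix an index $\ell$, replace $a_\ell$ by $\partial a_\ell$, and check that every term on the right-hand side of \eqref{100311:eq1} produces a factor $-\lambda_\ell$. The terms in the first sum where $i\neq\ell$ use sesquilinearity of the cochain $c$ itself, while the term $i=\ell$ uses the module sesquilinearity $(\partial a)_\lambda m=-\lambda a_\lambda m$ from Definition \ref{lca-mod}(i). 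For the second sum, the terms with $i,j\neq\ell$ use sesquilinearity of $c$; the terms where $\ell\in\{i,j\}$ require the two $\lambda$-bracket sesquilinearity relations $[\partial a_\lambda b]=-\lambda[a_\lambda b]$ and $[a_\lambda\partial b]=(\partial+\lambda)[a_\lambda b]$, where I must be careful that when $\ell=i$ the argument $\lambda_i$ has been replaced by $\lambda_i+\lambda_j$ in the evaluation slot, and the $\partial$ coming out of the bracket acts as multiplication by $-(\text{sum of the }\lambda\text{'s})$ inside $\mb F_-[\lambda_1,\dots,\lambda_{k+1}]$. Collecting these gives the desired overall factor $-\lambda_\ell$.

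\emph{Skewsymmetry:} since the symmetric group $S_{k+1}$ is generated by adjacent transpositions, it suffices to check that swapping $a_\ell\leftrightarrow a_{\ell+1}$ (together with $\lambda_\ell\leftrightarrow\lambda_{\ell+1}$) changes the sign of the right-hand side. In the first sum, the terms $i\neq\ell,\ell+1$ are matched in pairs using skewsymmetry of $c$ in the $k$ remaining arguments, and the two terms $i=\ell$ and $i=\ell+1$ are exchanged, with the sign bookkeeping $(-1)^{i+1}$ producing the extra minus sign. In the second sum the analysis is similar but more delicate: the terms not involving $\ell$ or $\ell+1$ pair off by skewsymmetry of $c$, the terms where exactly one of $i,j$ equals $\ell$ or $\ell+1$ get permuted among themselves, and the single term with $\{i,j\}=\{\ell,\ell+1\}$ is handled using the skewcommutativity relation $[a_\lambda b]=-[b_{-\lambda-\partial}a]$ for the $\lambda$-bracket, again tracking how $\partial$ acts by the appropriate multiplication inside $\mb F_-[\lambda_1,\dots,\lambda_{k+1}]$.

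For $d^2=0$ I would compute $(d(dc))_{\lambda_1,\dots,\lambda_{k+2}}(a_1,\dots,a_{k+2})$ by substituting \eqref{100311:eq1} into itself and organizing the resulting terms by type. There are four families: (A) two module actions ${a_i}_{\lambda_i}{a_j}_{\lambda_j}$ applied successively, (B) terms with one module action and one $\lambda$-bracket, and (C) terms with two $\lambda$-brackets. The type-(A) terms cancel in pairs by the symmetry of applying $a_i$ then $a_j$ versus $a_j$ then $a_i$, using the module commutator relation $a_\lambda(b_\mu m)-b_\mu(a_\lambda m)=[a_\lambda b]_{\lambda+\mu}m$ from Definition \ref{lca-mod}(ii): the antisymmetric combination of double module actions is exactly absorbed by the type-(B) terms in which the inner $d$ produces the bracket $[{a_i}_{\lambda_i}a_j]$ and the outer $d$ then acts on it by the module action. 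The type-(C) terms, involving $[[{a_i}_{\lambda_i}a_j]_{\dots}a_\ell]$-type expressions, cancel by the Jacobi identity $[a_\lambda[b_\mu c]]-[b_\mu[a_\lambda c]]=[[a_\lambda b]_{\lambda+\mu}c]$ for the $\lambda$-bracket.

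The main obstacle is purely combinatorial: getting the signs $(-1)^{i+1},(-1)^{j+1}$ and the index shifts in \eqref{100311:eq1} to line up correctly when terms are reindexed after a deletion or a bracket substitution, especially the off-by-one adjustments to the sign exponents and to the affected spectral parameter $\lambda_i+\lambda_j$ once an argument slot has been removed. This is the standard Chevalley--Eilenberg $d^2=0$ computation transported to the conformal setting, so I expect no conceptual difficulty beyond careful sign and index tracking; the three defining identities of a Lie conformal algebra together with the two module axioms supply exactly the three cancellation mechanisms needed, in complete parallel with formula \eqref{0301:eq1} for ordinary Lie algebra cohomology.
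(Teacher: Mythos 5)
Your proposal is correct in outline, but it takes a genuinely different route from the paper. The paper's own proof is essentially a citation: it observes that $C^\bullet(R,M)$ is the special case of the Lie pseudoalgebra cochain complex of \cite{BDAK} (Sec.~15.1) in which the Hopf algebra is $\mb F[\partial]$, and that the statement also follows from \cite{DSK}, where the same complex is set up in the poly-$\lambda$-bracket language of Remark \ref{100311:rem}. You instead carry out the direct Chevalley--Eilenberg-style verification in the conformal setting: sesquilinearity and skewsymmetry of $dc$ from the corresponding axioms of $R$, $M$ and $c$, and $d^2=0$ from the cancellation mechanisms supplied by the module identity $a_\lambda(b_\mu m)-b_\mu(a_\lambda m)=[a_\lambda b]_{\lambda+\mu}m$ and the Jacobi identity. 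What your approach buys is self-containedness --- no appeal to pseudoalgebra machinery --- at the cost of the long sign-and-index bookkeeping you acknowledge; what the paper's approach buys is brevity and the conceptual placement of this complex inside an established general theory.

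Two points in your sketch deserve tightening. First, before checking sesquilinearity and skewsymmetry of the output, you must verify that formula \eqref{100311:eq1} makes sense at all: the values of $c$ lie in the quotient $\mb F_-[\lambda_1,\stackrel{i}{\check{\cdots}},\lambda_{k+1}]\otimes_{\mb F[\partial]}M$, so applying ${a_i}_{\lambda_i}$ must be shown to descend to that quotient. This is exactly where Definition \ref{lca-mod}$(i)$ enters: $a_\lambda(\partial^M m)=(\partial^M+\lambda)(a_\lambda m)$ maps the relation $\partial^M\sim-(\lambda_1+\stackrel{i}{\check{\cdots}}+\lambda_{k+1})$ of the source into the relation $\partial^M\sim-(\lambda_1+\cdots+\lambda_{k+1})$ of the target. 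Your sketch uses this axiom only inside the sesquilinearity check, not as a separate well-definedness step. Second, in the $d^2=0$ computation you announce four families of terms but list only three; the missing family consists of the terms with two $\lambda$-brackets in disjoint argument slots (substituting the bracket sum of the inner $d$ into the bracket sum of the outer $d$ with $\{i,j\}\cap\{r,s\}=\emptyset$), which cancel among themselves purely by sign, independently of any axiom. Neither point is a fatal gap, but both belong in a complete write-up.
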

\begin{proof}
This complex is a special case of a Lie pseudoalgebra complex, when the Hopf algebra is $\mb F[\partial]$.
Hence, this statement follows from the results in \cite[Sec.15.1]{BDAK}.
It also follows from \cite{DSK}, where the complex $C^\bullet(R,M)$ is introduced in terms
of poly-$\lambda$-brackets (see Remark \ref{100311:rem} below).
\flushright\qed
\end{proof}

\begin{remark}\label{100311:rem}
We can identify the space $\mb F_-[\lambda_1,\dots,\lambda_k]\otimes_{\mb F[\partial]}M$
with the space $\mb F[\lambda_1,\dots,\lambda_{k-1}]\otimes M$,
by substituting $\lambda_k$ by $\lambda_k^\dagger:=-\lambda_1-\cdots-\lambda_{k-1}-\partial^M$.
Although this identification is not canonical, it is often convenient in practical use,
via the language of poly-$\lambda$-brackets.
Namely, a $k$-cochain $c\in C^k(R,M)$ is described as a $k$-$\lambda$-\emph{bracket},
i.e. a map $c: R^{\otimes k}\to\mb F[\lambda_1,\dots,\lambda_{k-1}]\otimes M,\,
a_1\otimes\cdots\otimes a_k\mapsto\{{a_1}_{\lambda_1}\cdots{a_{k-1}}_{\lambda_{k-1}}a_k\}_c$,
satisfying
\begin{enumerate}
\item[$(i)$]
$\{{a_1}_{\lambda_1}\dots (\partial a_i)_{\lambda_i}\dots {a_{k-1}}_{\lambda_{k-1}}a_k \}_c=
-\lambda_i \{{a_1}_{\lambda_1}\cdots{a_{k-1}}_{\lambda_{k-1}}a_k\}_c$
for $i\leq k-1$,
and 
$\{{a_1}_{\lambda_1}\dots {a_{k-1}}_{\lambda_{k-1}}\partial a_k \}_c=
-\lambda_k^\dagger \{{a_1}_{\lambda_1}\cdots{a_{k-1}}_{\lambda_{k-1}}a_k\}_c$;
\item[$(ii)$] 
$\{{a_{\sigma(1)}}_{\lambda_{\sigma(1)}}\cdots{a_{\sigma(k-1)}}_{\lambda_{\sigma(k-1)}}a_{\sigma(k)}\}_c
=
\sign(\sigma) \{{a_1}_{\lambda_1}\cdots{a_{k-1}}_{\lambda_{k-1}}a_k\}_c$, $\sigma\in S_k$,
where $\lambda_k$ is replaced by $\lambda_k^\dagger$ whenever it appears.
\end{enumerate}
In the language of poly-$\lambda$-brackets the differential $d:\, C^k(R,M)\to C^{k+1}(R,M)$, 
for $k\geq1$, takes the form
$$
\begin{array}{c}
\displaystyle{
\{{a_1}_{\lambda_1}\dots {a_k}_{\lambda_k}a_{k+1} \}_{dc} 
= \sum_{i=1}^{k+1}(-1)^{i+1} 
{a_i}_{\lambda_i}\big\{{a_1}_{\lambda_1}\stackrel{i}{\check{\cdots}} {a_k}_{\lambda_k}a_{k+1} \big\}_c 
}\\
\displaystyle{
+\sum_{1\leq i<j\leq k+1} (-1)^{j+1}
\big\{{a_1}_{\lambda_1}\cdots[a_{i\lambda_i}a_j]_{\lambda_i+\lambda_j}
\stackrel{j}{\check{\cdots}} {a_k}_{\lambda_{k}}a_{k+1} \big\}_c\, ,
}
\end{array}
$$
where, as before, we replace $\lambda_{k+1}$ by $\lambda_{k+1}^\dagger$ whenever it appears.
For example, if $c\in C^1(R,M)=\Hom_{\mb F[\partial]}(R,M)$,
then $\{a_\lambda b\}_{dc}=a_\lambda c(b)-b_{-\lambda-\partial^M}c(a)-c([a_\lambda b])$.
\end{remark}

\begin{remark}\label{0322:rem1}
Following \cite{DSK}, define the subcomplex $\bar C^\bullet(R,M)\subset C^\bullet(R,M)$,
consisting of the $\mb F$-linear maps 
$c:\,R^{\otimes k}\to\mb F_-[\lambda_1,\cdots,\lambda_k]\otimes_{\mb F[\partial]}M$,
such that $c_{\lambda_1,\cdots,\lambda_k}(a_1,\cdots,a_k)=0$
if one of the entries $a_i$ is a torsion element in the $\mb F[\partial]$-module $R$.
Then $\bar C^k(R,M)=C^k(R,M)$ if $k\neq1$,
while $\bar C^1(R,M)=\big\{c\in C^1(R,M)=\Hom_{\mb F[\partial]}(R,M)\,|\,c(\Tor(R))=0\big\}$.
For example, if $R$ is of finite rank as an $\mb F[\partial]$-module,
and it decomposes as 
\begin{equation}\label{0322:eq3}
R=\Tor(R)\oplus(\mb F[\partial]\otimes U)
\,\,,\,\,\,\,
\Tor(R)=\bigoplus_i\mb F[\partial]/(P_i(\partial))\,,
\end{equation}
then $\bar C^1(R,M)=\Hom_{\mb F}(U,M)$,
while $C^1(R,M)=\big(\bigoplus_i\ker(P_i(\partial):\,M\to M)\big)\oplus\Hom_{\mb F}(U,M)$.
\end{remark}

\subsection{The space of chains $C_\bullet(R,M)$ 
and its Gerstenhaber algebra structure}
\label{sec:4.3}

Let $R$ be a Lie conformal algebra and let $M$ be a module over $R$
endowed with the structure of a commutative associative algebra
on which $\partial^M$ and $a_\lambda,\,a\in R$, act by derivations.

For $k\in\mb Z_+$, consider the algebra of formal power series $M[[x_1,\dots,x_k]]$ 
with coefficients in $M$. It is endowed with an $\mb F[\partial]$-module structure,
with $\partial$ acting as $\partial^M$ on coefficients,
and with a $\lambda$-action of $R$ on $M[[x_1,\dots,x_k]]$,
where $a_\lambda$ acts on the coefficients of the formal power series.
Note that this is not an $R$-module structure of $R$ on $M[[x_1,\dots,x_k]]$,
since $a_\lambda\phi$ can be a formal power series in $\lambda$,
but it satisfies all other axioms of a module over a Lie conformal algebra.

Let $\mc M_k\subset M[[x_1,\dots,x_k]]$ be the subspace
of series $\phi(x_1,\dots,x_k)$ such that
\begin{equation}\label{0318:eq1}
\big(\partial_{x_1}+\cdots+\partial_{x_k}\big)\phi(x_1,\dots,x_k)
=\partial^M \phi(x_1,\dots,x_k)\,.
\end{equation}
For example, $\mc M_0=\{m\in M\,|\,\partial^Mm=0\}=M^\partial$,
and $\mc M_1=\{e^{x\partial^M}m\,|\,m\in M\}$, which is naturally identified with $M$.
Note that the subspace $\mc M_k$ is not an $R$-submodule.
We have a natural action of the group of permutations $S_k$ on $M[[x_1,\dots,x_k]]$,
which leaves $\mc M_k$ invariant, given by ($\sigma\in S_k$):
\begin{equation}\label{0318:eq2}
(\sigma\phi)(x_1,\dots,x_k)=\phi(x_{\sigma(1)},\dots,x_{\sigma(k)})\,.
\end{equation}
Also, recall that we have a natural action of $S_k$ on the space $R^{\otimes k}$
given by
\begin{equation}\label{0318:eq3}
\sigma(a_1\otimes\dots\otimes a_k)=a_{\sigma^{-1}(1)}\otimes\dots a_{\sigma^{-1}(k)}\,.
\end{equation}

The space of $k$-\emph{chains} $C_k(R,M)$
is defined as the quotient of the space $R^{\otimes k}\otimes\mc M_k$
by the following relations:
\begin{description}
\item[(sesquilinearity)]
$a_1\otimes \dots \partial a_i \dots \otimes a_k \otimes \phi 
+ a_1\otimes \dots \otimes a_k \otimes (\partial_{x_i} \phi)=0,\, i=1,\dots,k$,
\item[(skewsymmetry)]
$\sigma(a_1\otimes\cdots\otimes a_k\otimes\phi)=\sign(\sigma)a_1\otimes\cdots\otimes a_k\otimes\phi
,\,\sigma \in S_k$.
\end{description}
For example, $C_0(R,M)=\mc M_0= M^\partial$
and, using the identification $\mc M_1=M$ described above,
we have an identification
$C_1(R,M)=(R\otimes M)/\partial(R\otimes M)$.
We let $ C_\bullet=C_\bullet(A,M) = \oplus_{k\in \mb Z_+} C_k(A,M)$ be the $\mb Z_+$-graded
superspace of all chains, with parity induced by the $\mb Z_+$-grading.

The wedge product on $C_\bullet(R,M)$,
which makes it a commutative associative $\mb Z_+$-graded superalgebra, is given by
\begin{equation}\label{0312:eq1}
\begin{array}{c}
\big(a_1\otimes\cdots\otimes a_h\otimes\phi(x_1,\dots,x_h)\big)
\wedge\big(a_{h+1}\otimes\cdots\otimes a_k\otimes\psi(x_1,\dots,x_{k-h})\big) \\
=a_1\otimes\cdots\otimes a_k\otimes\big(\phi(x_1,\dots,x_h)\psi(x_{h+1},\dots,x_k)\big)\,.
\end{array}
\end{equation}
It is immediate to check  that the wedge product is well defined
and it is associative and commutative (in the super sense).
For commutativity we use the skewsymmetry relation in $C_k(R,M)$ where $\sigma$ 
is the permutation $\tau_{h,k-h}\in S_k$ exchanging the first $k-h$ letters
with the last $h$ letters:
\begin{equation}\label{0317:eq0}
\tau_{h,k-h}=
\left(\begin{array}{cccccc}
1 & \cdots & k-h & k-h+1 & \cdots & k \\
h+1 & \cdots & k & 1 & \cdots & h
\end{array}\right)\,.
\end{equation}

To have a Gerstenhaber algebra structure on $C_\bullet(R,M)$ 
we are left to introduce a Lie superalgebra bracket on $\Pi C_\bullet(R,M)$.
In \cite{DSK}, it was shown that 
$\Pi C_1 = (R\otimes M)/\partial(R\otimes M)$ carries a Lie algebra bracket given by the formula:
\begin{equation}\label{0312:eq7}
[a\otimes m, b\otimes n] = [a_{\partial^M_1}b]_{\to}\otimes mn + b\otimes (a_{\partial^M}n)_\to m 
- a\otimes (b_{\partial^M}m)_\to n \ ,
\end{equation}
where, $\partial^M_1$ in the first summand denotes $\partial^M$ acting 
only on the first factor $m$ 
and the right arrow means that $\partial^M$ should be moved to the right. 
Motivated by formula \eqref{0312:eq7} and by the definition of the Schouten bracket, we introduce
the following bracket on $C_\bullet(R,M)$:
\begin{equation}\label{0312:eq12}
\begin{array}{l}
\displaystyle{
\big[a_1\otimes\cdots\otimes a_h\otimes\phi(x_1,\dots,x_h)
\,,\,a_{h+1}\otimes\cdots\otimes a_k
\otimes\psi(x_1,\dots,x_{k-h})\big] 
}\\
\\
\displaystyle{
=
\sum_{i=1}^h\sum_{j=h+1}^k (-1)^{h+j+1} 
\big(
a_1\otimes\cdots[{a_i}\, _{\partial_{x_i}}a_j]
\stackrel{j}{\check{\cdots}}\otimes a_k
}\\
\,\,\,\,\,\,\,\,\,\,\,\,\,\,\,\,\,\,\,\,\,\,\,\,\,\,\,
\,\,\,\,\,\,\,\,\,\,\,\,\,\,\,\,\,\,\,\,\,\,\,\,\,\,\,
\displaystyle{
\otimes\phi(x_1,\dots,x_h)\psi(x_{h+1},\dots\stackrel{j}{y}\dots,x_{k-1})\,
\big) \Big|_{y=x_i}
}\\
\\
\displaystyle{
+ \sum_{i=1}^h (-1)^{h+i} a_1\otimes\stackrel{i}{\check{\cdots}}\otimes a_k
\otimes \big( {a_i}\,_{\partial_y}\psi(x_h,\dots,x_{k-1}) \big) 
\phi(x_1,\dots\stackrel{i}{y}\dots,x_{h-1}) \,\Big|_{y=0}
} \\
\\
\displaystyle{
+\! \sum_{i=h+1}^k (-1)^{h+i} a_1\otimes\stackrel{i}{\check{\cdots}}\otimes a_k
\otimes \big( {a_i}\,_{\partial_y}\phi(x_1,\dots,x_h) \big) 
\psi(x_{h+1},\dots\stackrel{i}{y}\dots,x_{k-1}) \,\Big|_{y=0} .
}
\end{array}
\end{equation}
Here and further, by writing $\stackrel{j}{y}$, we mean that $y$ is in place of the variable $x_j$,
and the remaining variables $x_\ell,\,\ell\geq j$, are shifted to the right.
For example, in the first term of the RHS above, we have
$\psi(x_{h+1},\dots\stackrel{j}{y}\dots,x_{k-1})
=\psi(x_{h+1},\dots,x_{j-1},y,x_{j},\dots,x_{k-1})$.
It is also clear that the bracket \eqref{0312:eq12} coincides with \eqref{0312:eq7} 
when $h=k-h=1$.
\begin{theorem}\label{0317:th}
Formulas \eqref{0312:eq1} for the wedge product  and \eqref{0312:eq12} for the bracket
define a structure of a $\mb Z_+$-graded Gerstenhaber algebra on the space of chains $C_\bullet(R,M)$.
\end{theorem}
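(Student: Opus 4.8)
The wedge product \eqref{0312:eq1} has already been shown to define a commutative associative $\mb Z_+$-graded superalgebra structure, so by Definition \ref{def:gerst} it remains to prove that the bracket \eqref{0312:eq12} makes $\Pi C_\bullet(R,M)$ a Lie superalgebra and satisfies the left Leibniz rule \eqref{Leibniz}. I would structure the argument in close analogy with the proof of Proposition \ref{schouten}, regarding \eqref{0312:eq12} as the Schouten-type bracket determined, through the Leibniz rules \eqref{Leibniz} and \eqref{r-Leibniz}, by its restriction to the degree $\le 1$ part: on $\Pi C_1(R,M)$ it must recover the Lie algebra bracket \eqref{0312:eq7} of \cite[Theorem 4.8]{DSK} (as already noted in the text for $h=k-h=1$), while the two single sums in \eqref{0312:eq12} encode the action of $\Pi C_1(R,M)$ by derivations on the commutative associative algebra $A=C_0(R,M)=M^\partial$, on which $\Pi C_1(R,M)$ and $A$ together form the Lie algebroid $(\Pi C_1(R,M),M^\partial)$ (with $A$ acting on $C_1(R,M)$ by the wedge product).

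First I would verify that \eqref{0312:eq12} descends to the quotient $C_\bullet(R,M)$, i.e.\ that it respects in each argument the sesquilinearity and skewsymmetry relations defining $C_k(R,M)$. Skewsymmetry is automatic from the antisymmetrized index sums; sesquilinearity, which trades $\partial a_i$ for $\partial_{x_i}\phi$, follows from the sesquilinearity of the $\lambda$-bracket $[\partial a_\lambda b]=-\lambda[a_\lambda b]$, $[a_\lambda\partial b]=(\partial+\lambda)[a_\lambda b]$ together with the defining identity \eqref{0318:eq1} of $\mc M_k$, and requires careful bookkeeping of the substitutions $|_{y=x_i}$ and $|_{y=0}$. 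Next, inspecting the three groups of terms in \eqref{0312:eq12} — the pairings $[a_i{}_{\partial_{x_i}}a_j]$ combining one factor of each argument, and the two sums acting on $\phi$ or $\psi$ — one sees that the bracket is a derivation of the wedge product in each entry, so that both Leibniz rules \eqref{Leibniz} and \eqref{r-Leibniz} hold. Skewcommutativity $[X,Y]=-(-1)^{\bar p(X)\bar p(Y)}[Y,X]$ then follows from the skewcommutativity $[a_\lambda b]=-[b_{-\lambda-\partial}a]$ of the $\lambda$-bracket, which converts the double sum of $[X,Y]$ into that of $[Y,X]$ and interchanges the two single sums.

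The Jacobi identity is the main obstacle. A frontal expansion on arbitrary chains is prohibitively complicated because of the nested formal variables and substitutions, so instead I would exploit the Leibniz rules just established: the Jacobiator $[X,[Y,Z]]-[[X,Y],Z]-(-1)^{\bar p(X)\bar p(Y)}[Y,[X,Z]]$ is a derivation in each of its three arguments, hence vanishes on all of $C_\bullet(R,M)$ as soon as it vanishes on a generating set. For this one checks that $(C_\bullet(R,M),\wedge)$ is generated by $C_0(R,M)$ and $C_1(R,M)$ — using the identification $\mc M_1\cong M$ and the fact, following from \eqref{0312:eq1}, that $C_k(R,M)=C_1(R,M)\wedge C_{k-1}(R,M)$ — and then verifies Jacobi only for triples drawn from $C_0\oplus C_1$. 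On such triples the identity reduces to: the Jacobi identity on $\Pi C_1(R,M)$, which holds by \cite[Theorem 4.8]{DSK}; the commutativity and associativity of $A=M^\partial$; and the compatibility of the $\Pi C_1(R,M)$-action with the product on $A$, inherited from the derivation property of $a_\lambda$ on $M$. Equivalently, one may run the same induction on total degree as in Proposition \ref{schouten}, reducing a high-degree argument $Y=Y_1\wedge Y'$ via the Leibniz rule with the base case in degrees $\le 1$; in either form, the Jacobi identity for the Schouten bracket on the Lie algebroid $(\Pi C_1(R,M),M^\partial)$ does the real work, and the principal technical difficulty is keeping the formal-variable substitutions of \eqref{0312:eq12} consistent throughout the well-definedness check and the reduction to generators.
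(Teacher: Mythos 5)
Your argument for the Jacobi identity rests on the claim that $(C_\bullet(R,M),\wedge)$ is generated by $C_0(R,M)$ and $C_1(R,M)$, i.e.\ that $C_k(R,M)=C_1(R,M)\wedge C_{k-1}(R,M)$, and this claim is false. Wedge products of $1$-chains only produce ``decomposable'' coefficient functions: since $\mc M_1=\{e^{x\partial^M}m\,|\,m\in M\}$, any finite sum of wedges of $1$-chains has coefficient of the form $\sum_i e^{x_1\partial^M}m_{i,1}\cdots e^{x_k\partial^M}m_{i,k}$ (skewsymmetrized), whereas $\mc M_k$ consists of \emph{all} formal series satisfying \eqref{0318:eq1}. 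Concretely, take $R=\mb F[\partial]u$ with zero $\lambda$-bracket and $M=\mc V=\mb F[u,u',u'',\dots]$ with the action \eqref{0329:eq1}: the $2$-chain $u\otimes u\otimes(x_1-x_2)$ (the Gardner--Faddeev--Zakharov bivector) is not a finite sum $\sum_i(u\otimes e^{x\partial}f_i)\wedge(u\otimes e^{x\partial}g_i)$. Indeed, equality in $C_2$ means equality of skewsymmetric parts of coefficients, so setting $x_2=0$ would give $x_1=\tfrac12\sum_i\big((e^{x_1\partial}f_i)g_i-f_i(e^{x_1\partial}g_i)\big)$; comparing coefficients of $x_1$ and applying the evaluation homomorphism $\epsilon:\mc V\to\mb F$, $u^{(n)}\mapsto 0$ (which kills $\partial f$ for every $f\in\mc V$, since $\partial f$ lies in the ideal generated by the $u^{(n)}$, $n\geq 1$) yields $1=0$. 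Since here $C_0(R,\mc V)=\mc V^\partial=\mb F$, the subalgebra generated by $C_0\oplus C_1$ misses this chain entirely. The same failure invalidates your alternative formulation (inducting by writing $Y=Y_1\wedge Y'$): a general chain admits no such factorization, so the derivation property of the Jacobiator has nothing to induct on. This is exactly the reason the Schouten-extension mechanism of Proposition \ref{schouten} cannot be run directly inside $C_\bullet(R,M)$: the constraint \eqref{0318:eq1} leaves no supply of decomposable (e.g.\ polynomial-coefficient) chains.

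The paper circumvents this by passing to the \emph{basic} chain complex $\tilde C_\bullet(R,M)$ of Remark \ref{0324:rem1}, whose coefficients are unconstrained series in $M[[x_1,\dots,x_k]]$; there the bracket \eqref{0312:eq12} is the Leibniz extension of the $1$-chain bracket of \cite{DSK}, so Jacobi and the left Leibniz rule hold, and the structure is then transported back: the $\partial$-invariant chains $\tilde C_\bullet^\partial$ form a Gerstenhaber subalgebra which, when $R$ is a direct sum of a free $\mb F[\partial]$-module and torsion, is identified with $\bar C_\bullet(R,M)=C_\bullet(R,M)/T_\bullet$ via Proposition 3.12 of \cite{DSK}; the torsion is then handled separately (Jacobi on $\Pi C_1$ from \cite{DSK}, centrality of $\Pi C_0$, the degree count $[C_i,C_j]\subset C_{i+j-1}$, and one residual case of the Leibniz rule checked by hand). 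Your preliminary steps (well-definedness and skewcommutativity) do match the paper's, but note they are understated: the compatibility of \eqref{0312:eq12} with the skewsymmetry relations and the skewcommutativity of the bracket are not ``automatic from antisymmetrized sums''---each occupies a substantial permutation-and-sign computation in the paper's proof.
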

\begin{proof}
First, we need to prove that the bracket in \eqref{0312:eq12} is well defined.
It is straightforward to check that each term in the RHS of \eqref{0312:eq12}
lies in $R^{\otimes {k-1}}\otimes\mc M_{k-1}$.
For the second and third term, one needs to use sesquilinearity of the $\lambda$-action
of $R$ on $M$.
It is not hard to check that the sesquilinearity relations defining $C_k(R,M)$
are preserved by the bracket \eqref{0312:eq12},
using the sesquilinearity of the $\lambda$-bracket on $R$ and of the $\lambda$-action of $R$ on $M$.
For the skewsymmetry relation, we have, for $\sigma\in S_h$,
\begin{equation}\label{0317:eq3}
\begin{array}{l}
\displaystyle{
\big[\sigma\big(a_{1}\otimes\cdots\otimes a_{h}\otimes\phi\big)
\,,\,
a_{h+1}\otimes\cdots\otimes a_k\otimes\psi\big] 
}\\
\displaystyle{
=
\sum_{i=1}^h\sum_{j=h+1}^k (-1)^{h+j+1} 
\Big(
a_{\sigma^{-1}(1)}\otimes\cdots[{a_{\sigma^{-1}(i)}}_{\partial_{x_i}}a_j]
\cdots a_{\sigma^{-1}(h)}\otimes a_{h+1}\stackrel{j}{\check{\cdots}}\otimes a_k
}\\
\,\,\,\,\,\,\,\,\,\,\,\,\,\,\,\,\,\,\,\,\,\,\,\,\,\,\,\,\,\,\,\,\,\,\,\,\,\,\,\,\,\,\,\,\,
\displaystyle{
\otimes
\phi(x_{\sigma(1)},\cdots,x_{\sigma(h)})\psi(x_{h+1},\cdots\stackrel{j}{y}\cdots,x_{k-1})
\Big)\,\Big|_{y=x_i}
}\\
\displaystyle{
+ \sum_{i=1}^h (-1)^{h+i} 
a_{\sigma^{-1}(1)}\otimes\stackrel{i}{\check{\cdots}}a_{\sigma^{-1}(h)}\otimes a_{h+1}
\cdots\otimes a_k
} \\
\,\,\,\,\,\,\,\,\,\,\,\,\,\,\,\,\,\,\,\,\,\,\,\,\,\,\,\,\,\,\,\,\,\,\,\,\,\,\,\,\,\,\,\,\,
\displaystyle{
\otimes
\big({a_{\sigma^{-1}(i)}}_{\partial_y}\psi(x_{h},\cdots,x_{k-1})\big)
(\sigma\phi)(x_1,\cdots\stackrel{i}{y}\cdots,x_{h-1})
\,\Big|_{y=0}
} \\
\displaystyle{
+ \sum_{i=h+1}^k (-1)^{h+i} 
a_{\sigma^{-1}(1)}\otimes\cdots a_{\sigma^{-1}(h)}\otimes a_{h+1}
\stackrel{i}{\check{\cdots}}\otimes a_k
} \\
\,\,\,\,\,\,\,\,\,\,\,\,\,\,\,\,\,\,\,\,\,\,\,\,\,\,\,\,\,\,\,\,\,\,\,\,\,\,\,\,\,\,\,\,\,
\displaystyle{
\otimes
\big({a_i}_{\partial_y}\phi(x_{\sigma(1)},\cdots,x_{\sigma(h)})\big)
\psi(x_{h+1},\cdots\stackrel{i}{y}\cdots,x_{k-1})
\,\Big|_{y=0} \,.
}
\end{array}
\end{equation}
It is not hard to check that the first term in the RHS of \eqref{0317:eq3} is equal to
$$
\begin{array}{l}
\displaystyle{
\sigma\bigg(
\sum_{\ell=1}^h\sum_{j=h+1}^k (-1)^{h+j+1} 
a_{1}\otimes\cdots[{a_{\ell}}_{\partial_{x_{\ell}}}a_j]
\cdots a_{h}\otimes a_{h+1}\stackrel{j}{\check{\cdots}}\otimes a_k
}\\
\,\,\,\,\,\,\,\,\,\,\,\,\,\,\,\,\,\,\,\,\,\,\,\,\,\,\,\,\,\,
\displaystyle{
\otimes
\phi(x_{1},\cdots,x_{h})\psi(x_{h+1},\cdots\stackrel{j}{y}\cdots,x_{k-1})
\,\Big|_{y=x_{\ell}}\bigg)\,,
}
\end{array}
$$
where we made the change of the summation index $\ell=\sigma^{-1}(i)$.
Here we consider $\sigma\in S_h$ as an element of $S_{k-1}$ via the obvious
embedding $S_h\subset S_{k-1}$.
Likewise, the last term in the RHS of \eqref{0317:eq3} is equal to
$$
\begin{array}{l}
\displaystyle{
\sigma\bigg(
\sum_{i=h+1}^k (-1)^{h+i} 
a_{1}\otimes\cdots a_{h}\otimes a_{h+1}
\stackrel{i}{\check{\cdots}}\otimes a_k
} \\
\,\,\,\,\,\,\,\,\,\,\,\,\,\,\,\,\,\,\,\,\,\,\,\,\,\,\,
\displaystyle{
\otimes
\big({a_i}_{\partial_y}\phi(x_1,\cdots,x_h)\big)
\psi(x_{h+1},\cdots\stackrel{i}{y}\cdots,x_{k-1})
\,\Big|_{y=0}\bigg) \,.
}
\end{array}
$$
We are left to consider the second term in the RHS of \eqref{0317:eq3}.
Given a permutation $\sigma\in S_h$ and $i=1,\dots,h$, we define the permutation $\sigma_i\in S_{h-1}$
as follows:
$$
\sigma_i:\,\{1,\dots,h-1\}
\stackrel{\sim}{\longrightarrow}\{1,\stackrel{\sigma^{-1}(i)}{\check{\cdots}},h\}
\stackrel{\sigma}{\longrightarrow}\{1,\stackrel{i}{\check{\cdots}},h\}
\stackrel{\sim}{\longrightarrow}\{1,\dots,h-1\}\,,
$$
where the first map is the shift to the right by 1 of indices greater than or equal to $\sigma^{-1}(i)$,
and the last map is the shift to the left by 1 of indices greater than $i$.
Clearly, the element $a_{\sigma^{-1}(1)}\otimes\stackrel{i}{\check{\cdots}}a_{\sigma^{-1}(h)}
\otimes a_{h+1}\cdots\otimes a_k\in R^{k-1}$
is obtained by applying the permutation $\sigma_i\in S_{h-1}\subset S_{k-1}$ to
the element $a_{1}\otimes\stackrel{\sigma^{-1}(i)}{\check{\cdots}}\otimes a_k$.
Moreover, we have the obvious identity
$$
\sigma_i\phi(x_1,\dots\stackrel{\sigma^{-1}(i)}{y}\dots,x_{h-1})
=(\sigma\phi)(x_1,\dots\stackrel{i}{y}\dots,x_{h-1}))\,,
$$
where in the LHS $\sigma_i$ permutes only the variables $x_1,\dots,x_{h-1}$.
These two facts together allow us to rewrite the second term in the RHS of \eqref{0317:eq3} as
$$
\begin{array}{l}
\displaystyle{
\sum_{i=1}^h (-1)^{h+i} \sigma_i\bigg(
a_{1}\otimes\stackrel{i}{\check{\cdots}}\otimes a_k
} \\
\,\,\,\,\,\,\,\,\,\,\,\,\,\,\,\,\,\,\,\,\,\,\,\,
\displaystyle{
\otimes
\big({a_{\sigma^{-1}(i)}}_{\partial_y}\psi(x_{h},\cdots,x_{k-1})\big)
\phi(x_1,\cdots\stackrel{\sigma^{-1}(i)}{y}\cdots,x_{h-1})
\,\Big|_{y=0}\bigg)\,.
}
\end{array}
$$
Combining the above results, and using the sign identity
\begin{equation}\label{0317:eq6}
\sign(\sigma_i)=(-1)^{i+\sigma(i)}\sign(\sigma)\,,
\end{equation}
we conclude that the RHS of \eqref{0317:eq3} is equal, modulo the skewsymmetry relation in $C_{k-1}(R,M)$,
to
$$
\sign(\sigma)[a_1\otimes\cdots\otimes a_h\otimes\phi,a_{h+1}\otimes\cdots\otimes a_k\otimes\psi] \,,
$$
as required.
The fact that the skewsymmetry relations in the right factor is also preserved
can be proved similarly. 
In fact, this will follow from the skewcommutativity of the bracket.
This concludes the proof that the bracket \eqref{0312:eq12} is well defined.

Next, we prove that the bracket \eqref{0312:eq12} is skewcommutative.
We have, after some change in the summation indices,
\begin{equation}\label{0317:eq8}
\begin{array}{l}
\displaystyle{
[a_{h+1}\otimes\cdots\otimes a_k\otimes\psi,a_1\otimes\cdots\otimes a_h\otimes\phi] 
}\\
\displaystyle{
\,\,\,\,\,\,
=
\sum_{i=1}^h \sum_{j=h+1}^k (-1)^{i+1} 
\Big(
a_{h+1}\otimes\cdots[{a_j}_{\partial_{x_{j-h}}}a_i]
\cdots a_k\otimes a_1
\stackrel{i}{\check{\cdots}}\otimes a_h
}\\
\,\,\,\,\,\,\,\,\,\,\,\,\,\,\,\,\,\,
\displaystyle{
\otimes \psi(x_1,\dots,x_{k-h})\phi(x_{k-h+1},\dots\stackrel{k-h+i}{y}\dots,x_{k-1})
\Big)\,\Big|_{y=x_{j-h}}
}\\
\,\,\,\,\,\,
\displaystyle{
+ \sum_{j=h+1}^k (-1)^{k+j} 
a_{h+1}\otimes\stackrel{j}{\check{\cdots}}a_k\otimes a_1\cdots\otimes a_h
}\\
\,\,\,\,\,\,\,\,\,\,\,\,\,\,\,\,\,\,
\displaystyle{
\otimes \big({a_j}_{\partial_y}\phi(x_{k-h},\dots,x_{k-1})\big)
\psi(x_1,\dots\stackrel{j-h}{y}\dots,x_{k-h})\,\Big|_{y=0}
} \\
\,\,\,\,\,\,
\displaystyle{
+ \sum_{i=1}^h (-1)^{i} 
a_{h+1}\otimes\cdots a_k\otimes a_1\stackrel{i}{\check{\cdots}}\otimes a_h
}\\
\,\,\,\,\,\,\,\,\,\,\,\,\,\,\,\,\,\,
\displaystyle{
\otimes \big({a_j}_{\partial_y}\psi(x_{1},\dots,x_{k-h})\big)
\phi(x_{k-h+1},\dots\stackrel{k-h+i}{y}\dots,x_{k-1})\,\Big|_{y=0}\,.
}
\end{array}
\end{equation}
Let us consider the first term in the RHS of \eqref{0317:eq8}.
Skewcommutativity of the $\lambda$-bracket in $R$ gives
$[{a_j}_{\partial_{x_{j-h}}}a_i]=-[{a_i}_{-\partial_{x_{j-h}}-\partial}{a_j}]$.
Combining this with the sesquilinearity relation in $C_{k-1}(R,M)$,
we conclude that we can replace $[{a_j}_{\partial_{x_{j-h}}}a_i]$ by $-[{a_i}_{\partial_y}{a_j}]$.
We then observe that
$$
a_{h+1}\otimes\cdots\stackrel{j}{[{a_i}_{\partial_y}{a_j}]}\cdots a_k
\otimes a_1\stackrel{i}{\check{\cdots}}\otimes a_h
=\sigma\big(
a_{1}\otimes\cdots\stackrel{i}{[{a_i}_{\partial_y}{a_j}]}\stackrel{j}{\check{\cdots}} \otimes a_k
\big)\,,
$$
where $\sigma\in S_{k-1}$ is the following permutation:
$$
\sigma(\ell)=
\left\{\begin{array}{lcl}
k-h+\ell & \text{ if } & \ell=1,\dots,i-1 \\
j-h & \text{ if } & \ell=i \\
k-1-h+\ell & \text{ if } & \ell=i+1,\dots,h \\
\ell-h & \text{ if } & \ell=h+1,\dots,j-1 \\
\ell-h+1 & \text{ if } & \ell=j,\dots,k-1 
\end{array}\right.\,.
$$
Moreover, we have
$$
\begin{array}{l}
\partial_y^n \psi(x_1,\dots,x_{k-h})\phi(x_{k-h+1},\dots\stackrel{k-h+i}{y}\dots,x_{k-1})\,\Big|_{y=x_{j-h}} \\
=
\sigma\Big(
\partial_{x_i}^n\phi(x_1,\dots,x_h)\psi(x_{h+1},\dots\stackrel{j}{y}\dots,x_{k-1})\,\Big|_{y=x_i}
\Big)\,.
\end{array}
$$
Combining the above results, we conclude that the first term in the RHS of \eqref{0317:eq8}
is equal to
$$
\begin{array}{l}
\displaystyle{
- \sum_{i=1}^h \sum_{j=h+1}^k (-1)^{i+1} \sigma\Big(
a_{1}\otimes\cdots[{a_i}_{\partial_{x_i}}{a_j}]\stackrel{j}{\check{\cdots}} \otimes a_k
}\\
\,\,\,\,\,\,\,\,\,\,\,\,\,\,\,\,\,\,
\displaystyle{
\phi(x_1,\dots,x_h)\psi(x_{h+1},\dots\stackrel{j}{y}\dots,x_{k-1})\,\Big|_{y=x_i}
\Big)
}
\end{array}
$$
Since $\sign(\sigma)=(-1)^{(h+1)(k-h+1)+h+j+i}$,
the above expression is the same,
modulo the skewsymmetry relation in $C_{k-1}(R,M)$,
as the first term in the RHS of \eqref{0312:eq12}
multiplied by $(-1)^{1+(h+1)(k-h+1)}$.
Next, consider the second term in the RHS of \eqref{0317:eq8}.
We have that
$$
a_{h+1}\otimes\stackrel{j}{\check{\cdots}}a_k\otimes a_1\cdots\otimes a_h
=
\tau_{k-h-1,h}\Big(a_1\otimes\stackrel{j}{\check{\cdots}}\otimes a_k\Big)\,,
$$
where the permutation $\tau_{k-h-1,h}\in S_{k-1}$ is defined in \eqref{0317:eq0}.
Moreover,
$$
\begin{array}{l}
\big({a_j}_{\partial_y}\phi(x_{k-h},\dots,x_{k-1})\big)
\psi(x_1,\dots\stackrel{j-h}{y}\dots,x_{k-h})\,\Big|_{y=0} \\
=
\tau_{k-h-1,h}\Big(
\big({a_j}_{\partial_y}\phi(x_1,\dots,x_h)\big)
\psi(x_{h+1},\dots\stackrel{j}{y}\dots,x_{k-1})\,\Big|_{y=0}\Big)\,.
\end{array}
$$
Combining the above results, we conclude that the second term in the RHS of \eqref{0317:eq8}
is equal to
$$
\begin{array}{l}
\displaystyle{
\sum_{j=h+1}^k (-1)^{k+j} \tau_{k-h-1,h}\Big(
a_1\otimes\stackrel{j}{\check{\cdots}}\otimes a_k
}\\
\,\,\,\,\,\,\,\,\,\,\,\,\,\,\,\,\,\,
\displaystyle{
\otimes 
\big({a_j}_{\partial_y}\phi(x_1,\dots,x_h)\big)
\psi(x_{h+1},\dots\stackrel{j}{y}\dots,x_{k-1})\,\Big|_{y=0}
\Big)
\,.
} 
\end{array}
$$
Clearly, $\sign(\tau_{k-h-1,h})=h(k-h+1)$.
Hence, due to the skewsymmetry relation in $C_{k-1}(R,M)$,
the above expression is the same as the third term
in the RHS of \eqref{0312:eq12} multiplied by $(-1)^{1+(h+1)(k-h+1)}$.
The third term in the RHS of \eqref{0317:eq8} is similar.
This proves that the bracket \eqref{0312:eq12} is skewcommutative.

We are left to prove the Jacobi identity and the odd Leibniz rule. These identities can be proven 
by a direct lengthy calculation, but instead we provide a short proof in the case when the Lie conformal algebra $R$ is 
a direct sum of a free $\mb F[\partial]$-submodule and torsion. 

First, we note that there is a natural Gerstenhaber algebra structure on the space of \emph{basic} chains $\tilde C_\bullet(R,M)$ (see Remark \ref{0324:rem1} below), defined by extending the Lie bracket on 1-chains $\Pi \tilde C_1(R,M)$ introduced in \cite{DSK},
$$ [a\otimes \phi(x),b\otimes \psi(x)] = [a_{\partial_{x_1}}b]\otimes \phi(x_1)\psi(x)|_{x_1=x}$$
$$- a\otimes (b_{\partial_y}\phi(x))\psi(y)|_{y=0}+ b\otimes (a_{\partial_y}\psi(x))\phi(y)|_{y=0} \,,$$
to all higher degree chains using the Leibniz rule \eqref{Leibniz} and \eqref{0312:eq1}. The resulting bracket coincides with \eqref{0312:eq12} and satisfies the Jacobi identity and the left Leibniz rule by construction. Moreover, the space $\tilde C_\bullet$ carries a $\mb Z$-graded $\mb F[\partial]$-module structure and the subspace of $\partial$-invariant chains $\tilde C_\bullet^\partial$, called the \emph{reduced} chain space, is a kernel of $\tilde C_\bullet$ and hence a Gerstenhaber subalgebra.

When $R$ is a direct sum of a free $\mb F[\partial]$-submodule and torsion, 
there is a bijection, established in 
Proposition 3.12 of \cite{DSK}, between the reduced chain space 
$\tilde C_\bullet^\partial$ and the quotient space $\bar C_\bullet$ of the 
chain space $C_\bullet$ by the 
subspace $T_1$ of $C_1$ (see Remark \ref{0322:rem2} below). 
This endows $\bar C_\bullet$ with the structure of a Gerstenhaber algebra,
in particular, $\Pi \bar C_\bullet$ is a Lie superalgebra. 
For the subalgebra $\Pi C_1$ of $\Pi C_\bullet$ the Jacobi identity was 
proven in Section 3.8 in \cite{DSK}. Next, recall that the Lie conformal 
algebra $R$ acts trivially on the torsion of the $\mb F[\partial]$-module 
$M$ \cite{K}, in particular on $M^\partial$. It follows that $\Pi C_0$ is 
in the center of the Lie algebra $\Pi C_\bullet$. Since $[C_i,C_j]$ lies 
in $C_{i+j-1}$, it follows that the Jacobi identity holds for $\Pi C_\bullet$.

Likewise, it suffices to check the Leibniz rule \eqref{Leibniz} for 
$X=a_1\otimes \phi(x_1)\in C_1$, $Y=m\in C_0$ and 
$Z=a_2\otimes a_3\otimes \psi(x_1,x_2)\in C_2$. We have
$$[a_1\otimes \phi(x_1),m\wedge a_2\otimes a_3\otimes \psi(x_1,x_2)]=
[{a_1}\,_{\partial_{x_1}}a_2]\otimes a_3\otimes \phi(x_1)m\psi(y,x_2)|_{y=x_1}$$
$$-[{a_1}\,_{\partial_{x_1}}a_3]\otimes a_2\otimes \phi(x_1)m\psi(x_1,y)|_{y=x_1}
+a_2\otimes a_3\otimes\left({a_1}\,_{\partial_y} (m\psi(x_1,x_2))\right)\phi(y)|_{y=0}$$
$$-a_1\otimes a_3\otimes ({a_2}\,_{\partial_y}\phi(x_1))m\psi(y,x_2)|_{y=0}+
a_1\otimes a_2\otimes ({a_3}\,_{\partial_y}\phi(x_1))m\psi(x_2,y)|_{y=0} \ .$$

The Leibniz rule follows immediately by expanding the third term on the RHS, 
using the fact that $a_\lambda$ acts by derivations on the commutative associative product in 
$M$. This completes the proof of the theorem.

\flushright\qed
\end{proof}

\begin{remark}\label{0322:rem2}
Consider the subspace $T_k$ of $C_k(R,M)$ spanned by elements
$a_1\otimes\cdots\otimes a_k\otimes\phi(x_1,\dots,x_k)$
such that one of the entries $a_i$ is a torsion element
of the $\mb F[\partial]$-module $R$.
Clearly, by \eqref{0312:eq1} and \eqref{0312:eq12}, $T_\bullet=\bigoplus_{k\in\mb Z_+} T_k$
is an abelian ideal of the Gerstenhaber algebra $C_\bullet(R,M)$.
Let $\bar C_\bullet(R,M)=\bigoplus_{k\in\mb Z_+}\bar C_k(R,M)$ be the corresponding Gerstenhaber 
factor algebra.
It is easy to see \cite{DSK} that $T_k=0$ if $k\neq1$, hence $\bar C_k(R,M)=C_k(R,M)$ for $k\neq1$,
while $C^1(R,M)=R\otimes M/\partial(R\otimes M)$ 
and $\bar C^1(R,M)=R\otimes M/(\Tor(R)\otimes M+\partial(R\otimes M))$.
If $R$ decomposes as in \eqref{0322:eq3},
then $\bar C_1(R,M)=U\otimes M$,
while $C_1(R,M)=\big(\bigoplus_i M/(P_i(\partial)M)\big)\oplus(U\otimes M)$.
\end{remark}

\begin{remark}\label{0326:rem}
We can define a differential $d$ on $C_\bullet(R,M)$ dual to the one on the cochain complex 
only when the $\lambda$-action of $R$ on $M$ is trivial. It is given by the following formula:
$$
\begin{array}{l}
d\big(a_1\otimes\cdots\otimes a_k\otimes\phi(x_1,\dots,x_k)\big) \\
\displaystyle{
=
\sum_{1\leq i<j\leq k} (-1)^{j+1}
a_1\otimes\cdots[{a_i}_{\partial_{x_i}}{a_j}]\stackrel{j}{\check{\cdots}}\otimes a_k\otimes
\phi(x_1,\dots\stackrel{j}{y}\cdots,x_{k-1})\,\big|_{y=x_i}\,.
}
\end{array}
$$
If we try to dualize the differential on $C^\bullet(R,M)$ even when the action is not trivial,
we would have to add the following term \cite{BKV}:
$$
\sum_{i=1}^k (-1)^{i+1}
a_1\otimes\stackrel{i}{\check{\cdots}}\otimes a_k\otimes
\big({a_i}_{\partial_{y}}\phi(x_1,\dots\stackrel{i}{y}\cdots,x_{k-1})\,\big|_{y=0}\,,
$$
but this is a divergent sum, since the $\lambda$-action of $R$ on $\mc M_k$
is not polynomial.
\end{remark}

\subsection{Calculus structure $(C_\bullet(R,M),C^\bullet(R,M))$}
\label{sec:4.4}

Given an $h$-chain $X=a_1\otimes\cdots\otimes a_h\otimes\phi(x_1,\dots,x_h)\in C_h(R,M)$
and a $k$-cochain $c=c_{\lambda_1,\cdots,\lambda_k}(a_1,\dots,a_k)\in C^k(R,M)$,
we define the \emph{contraction} of $c$ by $X$,
denoted $\iota_X(c)$, as the following element of $C^{k-h}(R,M)$:
\begin{equation}\label{0319:eq1}
\begin{array}{l}
(\iota_Xc)_{\lambda_{h+1},\cdots,\lambda_k}(a_{h+1},\cdots,a_k) \\
=
(-1)^{\frac{h(h-1)}2} \phi(\partial_{\lambda_1},\dots,\partial_{\lambda_h})
c_{\lambda_1,\cdots,\lambda_k}(a_1,\dots,a_k)\,\Big|_{\lambda_1=\cdots=\lambda_h=0}\,.
\end{array}
\end{equation}
Note that, for $h=k$, we need to take the integral of the RHS, since $C^0(R,M)=M/\partial M$.
It is proved in \cite[Lemma 7]{DSK}, in the poly-$\lambda$-bracket notation,
that the contraction $\iota_X(c)$ is well defined
and it lies in $C^{k-h}(R,M)$.
As usual, we define the \emph{Lie derivative} $L_X(c)\in C^{k-h+1}(R,M)$ by Cartan's formula \eqref{comp3}.
Recalling \eqref{100311:eq1}, we have, by a straightforward computation which we omit: 
\begin{equation}\label{0319:eq2}
\begin{array}{l}
(L_Xc)_{\lambda_{h+1},\cdots,\lambda_{k+1}}(a_{h+1},\cdots,a_{k+1}) \\
\displaystyle{
=
\!
(-1)^{\frac{h(h-1)}2}\! \bigg(
\sum_{i=1}^h(-1)^{i+1} \phi(\partial_{\lambda_1},\dots,\partial_{\lambda_h})
{a_i}_{\lambda_i} \big( c_{\lambda_1,\stackrel{i}{\check{\cdots}},\lambda_{k+1}}
(a_1,\stackrel{i}{\check{\cdots}},a_{k+1}) \big)
} \\
\,\,\,\,\,\,\,
\displaystyle{
+\sum_{i=h+1}^{k+1}(-1)^{i} \big( {a_i}_{\lambda_i} \phi(\partial_{\lambda_1},\dots,\partial_{\lambda_h}) \big)
c_{\lambda_1,\stackrel{i}{\check{\cdots}},\lambda_{k+1}}(a_1,\stackrel{i}{\check{\cdots}},a_{k+1}) 
} \\
\,\,\,\,\,\,\,
\displaystyle{
+\sum_{i=1}^h\sum_{j=i+1}^{k+1}(-1)^{j+1} \phi(\partial_{\lambda_1},\dots,\partial_{\lambda_h})
} \\
\,\,\,\,\,\,\,\,\,\,\,\,\,\,\,\,\,\,\,\,\,\,\,\,\,\,\,\,\,\,\,\,\,
\displaystyle{
c_{\lambda_1,\dots\lambda_i+\lambda_j\stackrel{j}{\check{\cdots}},\lambda_{k+1}}
(a_1,\dots[{a_i}_{\lambda_i}{a_j}]\stackrel{j}{\check{\cdots}},a_{k+1}) \big)
\bigg)\,\bigg|_{\lambda_1=\cdots=\lambda_h=0}\,.
}
\end{array}
\end{equation}

Recall that $C_1(R,M)$ is canonically identified with $R\otimes M/\partial(R\otimes M)$.
With this identification, formulas \eqref{0319:eq1} and \eqref{0319:eq2}
become, for $h=1$,
$$
\begin{array}{c}
\displaystyle{
\!\!\!\!\!\!\!
(\iota_{a_1\otimes m}c)_{\lambda_2,\cdots,\lambda_k}(a_2,\cdots,a_k)
=
c_{\partial^M,\lambda_2,\cdots,\lambda_k}(a_1,\cdots,a_k)_\to m \,,
}\\
\displaystyle{
(L_{a\otimes m}c)_{\lambda_1,\cdots,\lambda_k}(a_1,\cdots,a_k)
=
\big(a_{\partial^M}c_{\lambda_1,\cdots,\lambda_k}(a_1,\cdots,a_k)\big)_\to m 
}\\
\displaystyle{
+\sum_{i=1}^k c_{\lambda_1,\cdots\lambda_i+\partial^M\cdots,\lambda_k}(a_1,\cdots \stackrel{j}{a}\cdots,a_k)_\to
({a_i}_{\lambda_i}m)
}\\
\displaystyle{
-\sum_{i=1}^k c_{\lambda_1,\cdots\lambda_i+\partial^M\cdots,\lambda_k}
(a_1,\cdots [a_{\partial^M}a_i]\cdots,a_k)_\to m\,.
}
\end{array}
$$
In particular, for $h=k=1$ we have, recalling the identifications 
$C^0(R,M)=M/\partial^MM$ and $C^1(R,M)=\Hom_{\mb F[\partial]}(R,M)$,
$$
\begin{array}{l}
\displaystyle{
(\iota_{a\otimes m}c)=\tint c(a)m\,,
}\\
\displaystyle{
(L_{a\otimes m}c)(b)
=
\big(a_{\partial^M}c(b)\big)_\to m
+_\leftarrow\big(b_{-\partial^M}m\big)c(a)
-c([a_{\partial^M}]b)_\to m\,.
}
\end{array}
$$
In the second term of the RHS, the left arrow means that $\partial^M$ should be moved to the left
to act on the whole expression.

\begin{theorem}\label{0319:th}
The contraction map $\iota_\cdot:\,C_\bullet(R,M)\to\End(C^\bullet(R,M))$,
and the Lie derivative map $L_\cdot:\,\Pi C_\bullet(R,M)\to\End(C^\bullet(R,M))$,
define a $\mb Z_+$-graded calculus structure $(C_\bullet(R,M),C^\bullet(R,M))$.
\end{theorem}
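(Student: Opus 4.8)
The plan is to apply Remark \ref{0228:rem}: since the Lie derivative is defined by Cartan's formula \eqref{comp3}, to exhibit a calculus structure it suffices to check that the contraction $\iota_\cdot$ is a representation of the commutative associative superalgebra $(C_\bullet(R,M),\wedge)$, i.e. $\iota_{X\wedge Y}=\iota_X\iota_Y$, and that the compatibility
\[
[[\iota_X,d],\iota_Y]=\iota_{[X,Y]}
\]
holds for all $X,Y\in C_\bullet(R,M)$. The $\mb Z_+$-grading of the resulting structure is automatic from \eqref{0319:eq1} (contraction by an $h$-chain lowers the cochain degree by $h$) together with Cartan's formula (the Lie derivative raises it by one).

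First I would prove $\iota_{X\wedge Y}=\iota_X\iota_Y$ directly from the definition \eqref{0319:eq1} of the contraction and the formula \eqref{0312:eq1} for the wedge product. Writing $X=a_1\otimes\cdots\otimes a_h\otimes\phi$ and $Y=a_{h+1}\otimes\cdots\otimes a_k\otimes\psi$, contraction by $X\wedge Y$ substitutes $\lambda_1,\dots,\lambda_k$ by the differential operator $\phi(\partial_{\lambda_1},\dots,\partial_{\lambda_h})\psi(\partial_{\lambda_{h+1}},\dots,\partial_{\lambda_k})$ evaluated at zero, which is exactly $\iota_X$ applied after $\iota_Y$; the well-definedness of each contraction is \cite[Lemma 7]{DSK}. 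The only delicate point is that the prefactor signs $(-1)^{h(h-1)/2}$, $(-1)^{(k-h)(k-h-1)/2}$ and $(-1)^{k(k-1)/2}$ must combine with the Koszul sign arising when $\iota_Y$ is commuted past the first $h$ slots; these match thanks to the skewsymmetry relation defining $C_k(R,M)$. As an immediate consequence, Cartan's formula forces $L$ to obey the $0$-right Leibniz rule $L_{X\wedge Y}=\iota_XL_Y+(-1)^{p(Y)}L_X\iota_Y$ of \eqref{0217:eq1}.

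The heart of the proof is the compatibility $[[\iota_X,d],\iota_Y]=\iota_{[X,Y]}$. Once $\iota_{X\wedge Y}=\iota_X\iota_Y$ is known, both sides obey the same Leibniz rule in each argument: the reduction in the second slot uses $\iota_{Y\wedge Z}=\iota_Y\iota_Z$ together with the right Leibniz rule \eqref{r-Leibniz} for the Schouten bracket, and the reduction in the first slot uses the $0$-right Leibniz rule for $L$ just obtained; these are precisely the inductive computations in the proof of Theorem \ref{0217:th}. This would reduce the identity to the case where $X$ and $Y$ are of degree at most one. For $0$-chains $X\in C_0(R,M)=M^\partial$ the contraction $\iota_X$ is multiplication by an element of $M^\partial$ and the identity is an elementary check (these chains are central); for $1$-chains the identity $[[\iota_X,d],\iota_Y]=\iota_{[X,Y]}$ together with $[\iota_X,\iota_Y]=0$ is exactly the assertion that $(\Pi C_1(R,M),C^\bullet(R,M))$ is a $\mf g$-complex, which is established in \cite{DSK}.

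The main obstacle is that this last reduction is not literally valid: the subalgebra of $C_\bullet(R,M)$ generated by $C_0\oplus C_1$ consists of chains whose coefficient $\phi$ is a product $\psi_1(x_1)\cdots\psi_k(x_k)$ of elements of $\mc M_1$, whereas a general $\phi\in\mc M_k$ from \eqref{0318:eq1} is a genuine formal power series that need not factor (for instance $e^{x_1-x_2}\in\mc M_2$ is not in the span of such products). Hence $C_\bullet(R,M)$ is not algebraically generated in degrees $\le 1$, and the Leibniz reduction only proves the compatibility on a dense subalgebra. I expect this gap to be closed by a continuity argument: endowing $C_\bullet(R,M)$ with the formal-power-series topology as in Section \ref{sec:3.2}, the operators $\iota_\cdot$, $L_\cdot$, $d$ and the Schouten bracket are continuous and the generated subalgebra is dense, so the compatibility extends to all chains. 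Alternatively, the identity can be verified by a direct, if lengthy, computation from \eqref{0319:eq1}, \eqref{0319:eq2} and \eqref{0312:eq12}; as in the proof of Theorem \ref{0317:th}, the cleanest variant treats first the case where $R$ is a direct sum of a free $\mb F[\partial]$-module and its torsion and passes to the quotient by the torsion ideal $T_\bullet$ of Remark \ref{0322:rem2}.
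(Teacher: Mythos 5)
Your framework (the reduction via Remark \ref{0228:rem}) and your proof that $\iota_{X\wedge Y}=\iota_X\iota_Y$, including the role of the sign identity \eqref{0319:eq3}, coincide with the paper's. The gap is in the compatibility $[[\iota_X,d],\iota_Y]=\iota_{[X,Y]}$, and neither of your proposed repairs closes it. You are right that the Leibniz-rule reduction to chains of degree at most one is illegitimate, since $C_\bullet(R,M)$ is not generated in degrees $\leq 1$; but the continuity argument you offer as a fix fails as well, because the subalgebra generated by $C_0\oplus C_1$ need not be \emph{dense} in $C_\bullet(R,M)$ -- the obstruction being precisely condition \eqref{0318:eq1}, which prevents approximating a general $\phi\in\mc M_k$ by sums of products of elements of $\mc M_1$ (truncations of $\phi$ leave $\mc M_k$). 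Concretely, take $R=\mb F[\partial]u_1\oplus\mb F[\partial]u_2$ with zero $\lambda$-bracket and $M=\mb F$ with $\partial^M=0$ and trivial $\lambda$-action. Then $\mc M_1=\mb F$, so every degree-two element of the subalgebra generated by $C_0\oplus C_1$ has constant coefficient $\phi$, whereas $\mc M_2=\{\theta(x_1-x_2)\,|\,\theta\in\mb F[[y]]\}$. Let $c\in C^2(R,M)$ be the cochain determined by $c_{\lambda_1,\lambda_2}(u_1,u_2)=\lambda_1$ (extended by sesquilinearity and skewsymmetry). The functional $\iota_\cdot c:\,C_2(R,M)\to C^0(R,M)=\mb F$ is continuous in the formal power series topology (it reads off finitely many coefficients of $\phi$), it vanishes on every chain with constant $\phi$, yet $\iota_Xc=-1\neq0$ for $X=u_1\otimes u_2\otimes(x_1-x_2)$. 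Hence $X$ is not in the closure of the subalgebra generated in degrees $\leq1$, density fails, and agreement of the two sides of \eqref{comp1} on that subalgebra proves nothing about general chains.

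What survives of your proposal is only the parenthetical fallback -- a direct verification from \eqref{0319:eq1}, \eqref{0319:eq2} and \eqref{0312:eq12} -- and that is exactly what the paper's proof consists of, carried out for arbitrary chains and arbitrary $R$: for $X=a_1\otimes\cdots\otimes a_h\otimes\phi$, $Y=a_{h+1}\otimes\cdots\otimes a_k\otimes\psi$ and $c\in C^\ell(R,M)$, one expands $L_X\iota_Yc$ and $\pm\iota_YL_Xc$; in the signed sum the terms where $a_{i\lambda_i}$ acts on $c$, the terms with $i>k$, and the bracket terms with both indices $\leq h$ cancel, leaving three families -- $a_i$ ($i\leq h$) acting on $\psi$, $a_i$ ($h<i\leq k$) acting on $\phi$, and $[{a_i}_{\lambda_i}a_j]$ with $i\leq h<j\leq k$ -- which are identified, via \eqref{0319:eq3}, with the contractions by the three sums in the Schouten bracket \eqref{0312:eq12}. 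Note also that, unlike in Theorem \ref{0317:th}, no decomposition of $R$ into free part plus torsion is needed here, so your suggestion to pass through the quotient by $T_\bullet$ is both unnecessary and insufficient (an identity for $\bar C_\bullet$ acting on $\bar C^\bullet$ would not cover the action of $C_\bullet$ on torsion $1$-cochains in $C^1(R,M)$). Since your proposal defers precisely this computation, the main content of the theorem is missing from it.
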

\begin{proof}
According to Remark \ref{0228:rem}, we only need to prove that
$\iota_\cdot$ gives a representation of the associative superalgebra $C_\bullet(R,M)$
on $C^\bullet(R,M)$, and that equation \eqref{comp1} holds.
Applying formula \eqref{0319:eq1} twice and using the skewsymmetry condition on $C^\ell(R,M)$
for the permutation $\tau_{h,k-h}$ in \eqref{0317:eq0},
we get, for $X\in C_h(R,M),\,Y\in C_{k-h}(R,M),\,c\in C^{\ell}(R,M)$,
that $\iota_X\iota_Y(c)=\iota_{X\wedge Y}(c)$, due to the identity
\begin{equation}\label{0319:eq3}
s_{h,k-h}:={\frac{h(h-1)}2+\frac{(k-h)(k-h-1)}2+h(k-h)}\equiv {\frac{k(k-1)}2}\,\text{ mod } 2\,.
\end{equation}
This explains the choice of the sign factor $(-1)^{\frac{h(h-1)}2}$ 
in the definition \eqref{0319:eq1} of the contraction operators.
Next, we prove equation \eqref{comp1}.
Let $X=a_1\otimes\cdots\otimes a_h\otimes\phi\in C_h(R,M),\,
Y=a_{h+1}\otimes\cdots\otimes a_k\otimes\psi\in C_{k-h}(R,M),\,
c=c_{\lambda_1,\cdots,\lambda_\ell}(b_1,\cdots,b_\ell)\in C^\ell(R,M)$.
We want to prove that
\begin{equation}\label{0319:eq4}
\begin{array}{l}
(L_X\iota_Y c)_{\lambda_{k+1},\cdots,\lambda_{\ell+1}}(a_{k+1},\cdots,a_{\ell+1}) \\
\vphantom{\bigg(}
+(-1)^{1+(h+1)(k-h)}
(\iota_Y L_X c)_{\lambda_{k+1},\cdots,\lambda_{\ell+1}}(a_{k+1},\cdots,a_{\ell+1}) \\
= (\iota_{[X,Y]} c)_{\lambda_{k+1},\cdots,\lambda_{\ell+1}}(a_{k+1},\cdots,a_{\ell+1}) \,.
\end{array}
\end{equation}
By \eqref{0319:eq1} and \eqref{0319:eq2}, the first term in the RHS of \eqref{0319:eq4} is
\begin{equation}\label{0319:eq5}
\begin{array}{l}
\displaystyle{
(-1)^{s_{h,k-h}+(k-h)}
\Bigg(
\sum_{i=1}^h (-1)^{i+1}
\phi(\partial_{\lambda_1},\dots,\partial_{\lambda_h})
}\\
\displaystyle{
\,\,\,\,\,\,\,\,\,\,\,\,\,\,\,\,\,\,\,\,\,\,\,\,\,\,\,\,\,\,\,\,\,\,\,\,
\,\,\,\,\,\,\,\,\,\,\,\,\,\,\,\,\,\,\,\,\,\,\,\,\,\,\,\,
{a_i}_{\lambda_i}\big(
\psi(\partial_{\lambda_{h+1}},\dots,\partial_{\lambda_k})
c_{\lambda_1,\stackrel{i}{\check{\cdots}},\lambda_{\ell+1}}(a_1,\stackrel{i}{\check{\cdots}},a_{\ell+1})
\big)
}\\
\displaystyle{
+ \sum_{i=k+1}^{\ell+1} (-1)^{i}
\big( {a_i}_{\lambda_i}
\phi(\partial_{\lambda_1},\dots,\partial_{\lambda_h})
\big)
\psi(\partial_{\lambda_{h+1}},\dots,\partial_{\lambda_k})
}\\
\displaystyle{
\,\,\,\,\,\,\,\,\,\,\,\,\,\,\,\,\,\,\,\,\,\,\,\,\,\,\,\,\,\,\,\,\,\,\,\,
\,\,\,\,\,\,\,\,\,\,\,\,\,\,\,\,\,\,\,\,\,\,\,\,\,\,\,\,\,\,\,\,\,\,\,\,
\,\,\,\,\,\,\,\,\,\,\,\,\,\,\,\,\,\,\,\,\,\,\,\,\,\,\,\,\,\,\,\,\,\,\,\,
\,\,\,\,\,\,\,\,\,\,\,\,\,\,\,
c_{\lambda_1,\stackrel{i}{\check{\cdots}},\lambda_{\ell+1}}(a_1,\stackrel{i}{\check{\cdots}},a_{\ell+1})
}\\
\displaystyle{
+ \sum_{i=1}^h\sum_{j=i+1}^h (-1)^{j+1}
\phi(\partial_{\lambda_1},\dots,\partial_{\lambda_h})
\psi(\partial_{\lambda_{h+1}},\dots,\partial_{\lambda_k})
}\\
\displaystyle{
\,\,\,\,\,\,\,\,\,\,\,\,\,\,\,\,\,\,\,\,\,\,\,\,\,\,\,\,\,\,\,\,\,\,\,\,
\,\,\,\,\,\,\,\,\,\,\,\,\,\,\,\,\,\,\,\,\,\,\,\,\,\,\,\,\,\,\,\,\,\,\,\,
\,\,\,\,\,\,
c_{\lambda_1,\cdots\lambda_i+\lambda_j\stackrel{j}{\check{\cdots}},\lambda_{\ell+1}}
(a_1,\cdots[{a_i}_{\lambda_i}{a_j}]\stackrel{j}{\check{\cdots}},a_{\ell+1})
}\\
\displaystyle{
+ \sum_{i=1}^h\sum_{j=k+1}^{\ell+1} (-1)^{j+1}
\phi(\partial_{\lambda_1},\dots,\partial_{\lambda_h})
\psi(\partial_{\lambda_{h+1}},\dots,\partial_{\lambda_k})
}\\
\displaystyle{
\,\,\,\,\,\,\,\,\,\,\,\,\,\,\,\,\,\,\,\,\,\,\,\,\,\,\,\,\,\,\,\,\,\,\,\,
\,\,\,\,
c_{\lambda_1,\cdots\lambda_i+\lambda_j\stackrel{j}{\check{\cdots}},\lambda_{\ell+1}}
(a_1,\cdots[{a_i}_{\lambda_i}{a_j}]\stackrel{j}{\check{\cdots}},a_{\ell+1})
\Bigg)\,\Bigg|_{\lambda_1=\cdots=\lambda_k=0}.
}
\end{array}
\end{equation}
Similarly, the second term in the RHS of \eqref{0319:eq4} is
\begin{equation}\label{0319:eq6}
\begin{array}{l}
\displaystyle{
(-1)^{s_{h,k-h}+(k-h)+1}
\Bigg(
\sum_{i=1}^h (-1)^{i+1}
\phi(\partial_{\lambda_1},\dots,\partial_{\lambda_h})
\psi(\partial_{\lambda_{h+1}},\dots,\partial_{\lambda_k})
}\\
\displaystyle{
\,\,\,\,\,\,\,\,\,\,\,\,\,\,\,\,\,\,\,\,\,\,\,\,\,\,\,\,\,\,\,\,\,\,\,\,
\,\,\,\,\,\,\,\,\,\,\,\,\,\,\,\,\,\,\,\,\,\,\,\,\,\,\,\,\,\,\,\,\,\,\,\,
\,\,\,\,\,\,\,\,\,\,\,\,\,\,\,\,\,\,\,\,\,\,\,\,\,\,\,\,\,\,\,\,\,\,\,\,\,
\big({a_i}_{\lambda_i}
c_{\lambda_1,\stackrel{i}{\check{\cdots}},\lambda_{\ell+1}}(a_1,\stackrel{i}{\check{\cdots}},a_{\ell+1})
\big)
}\\
\displaystyle{
+ \sum_{i=h+1}^{\ell+1} (-1)^{i}
\psi(\partial_{\lambda_{h+1}},\dots,\partial_{\lambda_k})
\big( {a_i}_{\lambda_i}
\phi(\partial_{\lambda_1},\dots,\partial_{\lambda_h}) \big)
}\\
\displaystyle{
\,\,\,\,\,\,\,\,\,\,\,\,\,\,\,\,\,\,\,\,\,\,\,\,\,\,\,\,\,\,\,\,\,\,\,\,
\,\,\,\,\,\,\,\,\,\,\,\,\,\,\,\,\,\,\,\,\,\,\,\,\,\,\,\,\,\,\,\,\,\,\,\,
\,\,\,\,\,\,\,\,\,\,\,\,\,\,\,\,\,\,\,\,\,\,\,\,\,\,\,\,\,\,\,\,\,\,\,\,
\,\,\,\,\,\,\,\,\,\,\,\,\,\,\,\,\,
c_{\lambda_1,\stackrel{i}{\check{\cdots}},\lambda_{\ell+1}}(a_1,\stackrel{i}{\check{\cdots}},a_{\ell+1})
}\\
\displaystyle{
+ \sum_{i=1}^h\sum_{j=i+1}^{\ell+1} (-1)^{j+1}
\phi(\partial_{\lambda_1},\dots,\partial_{\lambda_h})
\psi(\partial_{\lambda_{h+1}},\dots,\partial_{\lambda_k})
}\\
\displaystyle{
\,\,\,\,\,\,\,\,\,\,\,\,\,\,\,\,\,\,\,\,\,\,\,\,\,\,\,\,\,\,\,\,\,\,\,\,
\,\,\,\,
c_{\lambda_1,\cdots\lambda_i+\lambda_j\stackrel{j}{\check{\cdots}},\lambda_{\ell+1}}
(a_1,\cdots[{a_i}_{\lambda_i}{a_j}]\stackrel{j}{\check{\cdots}},a_{\ell+1})
\Bigg)\,\Bigg|_{\lambda_1=\cdots=\lambda_k=0}.
}
\end{array}
\end{equation}
Combining \eqref{0319:eq5} and \eqref{0319:eq6}, we get that the LHS of \eqref{0319:eq4} is
\begin{equation}\label{0319:eq7}
\begin{array}{l}
\displaystyle{
(-1)^{s_{h,k-h}+(k-h)}
\Bigg(
\sum_{i=1}^h (-1)^{i+1}
\phi(\partial_{\lambda_1},\dots,\partial_{\lambda_h})
\big({a_i}_{\lambda_i}
\psi(\partial_{\lambda_{h+1}},\dots,\partial_{\lambda_k})\big)
}\\
\displaystyle{
\,\,\,\,\,\,\,\,\,\,\,\,\,\,\,\,\,\,\,\,\,\,\,\,\,\,\,\,\,\,\,\,\,\,\,\,
\,\,\,\,\,\,\,\,\,\,\,\,\,\,\,\,\,\,\,\,\,\,\,\,\,\,\,\,\,\,\,\,\,\,\,\,
\,\,\,\,\,\,\,\,\,\,\,\,\,\,\,\,\,\,\,\,\,\,\,\,\,\,\,\,\,\,\,\,\,\,\,\,
\,\,\,\,\,\,\,\,\,\,\,\,\,\,\,\,\,
c_{\lambda_1,\stackrel{i}{\check{\cdots}},\lambda_{\ell+1}}(a_1,\stackrel{i}{\check{\cdots}},a_{\ell+1})
}\\
\displaystyle{
+ \sum_{i=h+1}^{k} (-1)^{i+1}
\psi(\partial_{\lambda_{h+1}},\dots,\partial_{\lambda_k})
\big( {a_i}_{\lambda_i}
\phi(\partial_{\lambda_1},\dots,\partial_{\lambda_h}) \big)
}\\
\displaystyle{
\,\,\,\,\,\,\,\,\,\,\,\,\,\,\,\,\,\,\,\,\,\,\,\,\,\,\,\,\,\,\,\,\,\,\,\,
\,\,\,\,\,\,\,\,\,\,\,\,\,\,\,\,\,\,\,\,\,\,\,\,\,\,\,\,\,\,\,\,\,\,\,\,
\,\,\,\,\,\,\,\,\,\,\,\,\,\,\,\,\,\,\,\,\,\,\,\,\,\,\,\,\,\,\,\,\,\,\,\,
\,\,\,\,\,\,\,\,\,\,\,\,\,\,\,\,\,
c_{\lambda_1,\stackrel{i}{\check{\cdots}},\lambda_{\ell+1}}(a_1,\stackrel{i}{\check{\cdots}},a_{\ell+1})
}\\
\displaystyle{
+ \sum_{i=1}^h\sum_{j=h+1}^k (-1)^{j}
\phi(\partial_{\lambda_1},\dots,\partial_{\lambda_h})
\psi(\partial_{\lambda_{h+1}},\dots,\partial_{\lambda_k})
}\\
\displaystyle{
\,\,\,\,\,\,\,\,\,\,\,\,\,\,\,\,\,\,\,\,\,\,\,\,\,\,\,\,\,\,\,\,\,\,\,\,
\,\,\,\,
c_{\lambda_1,\cdots\lambda_i+\lambda_j\stackrel{j}{\check{\cdots}},\lambda_{\ell+1}}
(a_1,\cdots[{a_i}_{\lambda_i}{a_j}]\stackrel{j}{\check{\cdots}},a_{\ell+1})
\Bigg)\,\Bigg|_{\lambda_1=\cdots=\lambda_k=0}.
}
\end{array}
\end{equation}
We next observe that, for $i=1,\dots,h$,
$$
\phi(\partial_{\lambda_1},\dots,\partial_{\lambda_h})
\big({a_i}_{\lambda_i}
\psi(\partial_{\lambda_{h+1}},\dots,\partial_{\lambda_k})\big)
c_{\lambda_1,\stackrel{i}{\check{\cdots}},\lambda_{\ell+1}}(a_1,\stackrel{i}{\check{\cdots}},a_{\ell+1})
\Big|_{\lambda_1=\cdots=\lambda_k=0}\,,
$$
is the contraction of $c$ by
$$
a_1\otimes\stackrel{i}{\check{\cdots}}\otimes a_k
\otimes \big( {a_i}\,_{\partial_y}\psi(x_h,\dots,x_{k-1}) \big) 
\phi(x_1,\dots\stackrel{i}{y}\dots,x_{h-1}) \,\Big|_{y=0}\,,
$$
that, for $i=h+1,\dots,k$,
$$
\psi(\partial_{\lambda_{h+1}},\dots,\partial_{\lambda_k})
\big( {a_i}_{\lambda_i}
\phi(\partial_{\lambda_1},\dots,\partial_{\lambda_h}) \big)
c_{\lambda_1,\stackrel{i}{\check{\cdots}},\lambda_{\ell+1}}(a_1,\stackrel{i}{\check{\cdots}},a_{\ell+1})
\Big|_{\lambda_1=\cdots=\lambda_k=0}\,,
$$
is the contraction of $c$ by
$$
a_1\otimes\stackrel{i}{\check{\cdots}}\otimes a_k
\otimes \big( {a_i}\,_{\partial_y}\phi(x_1,\dots,x_h) \big) 
\psi(x_{h+1},\dots\stackrel{i}{y}\dots,x_{k-1}) \,\Big|_{y=0}\,,
$$
and that, for $i=1,\dots,h,\,k=i+1,\dots,k$,
$$\
\begin{array}{l}
\phi(\partial_{\lambda_1},\dots,\partial_{\lambda_h})
\psi(\partial_{\lambda_{h+1}},\dots,\partial_{\lambda_k}) \\
\,\,\,\,\,\,\,\,\,\,\,\,\,\,\,
c_{\lambda_1,\cdots\lambda_i+\lambda_j\stackrel{j}{\check{\cdots}},\lambda_{\ell+1}}
(a_1,\cdots[{a_i}_{\lambda_i}{a_j}]\stackrel{j}{\check{\cdots}},a_{\ell+1})
\Big|_{\lambda_1=\cdots=\lambda_k=0}\,,
\end{array}
$$
is the contraction of $c$ by
$$
\big( a_1\otimes\cdots[{a_i}\, _{\partial_{x_i}}a_j]
\stackrel{j}{\check{\cdots}}\otimes a_k
\otimes\phi(x_1,\dots,x_h)\psi(x_{h+1},\dots\stackrel{j}{y}\dots,x_{k-1})\,
\big) \Big|_{y=x_i}\,.
$$
Finally, we combine the above results and we use equation \eqref{0319:eq3}
to conclude, 
recalling the definition \eqref{0312:eq12} of the Lie bracket on $C_\bullet(R,M)$,
that \eqref{0319:eq7} is equal to the RHS of \eqref{0319:eq4}.
\flushright\qed
\end{proof}

\begin{remark}\label{0322:rem3}
Recall from Remark \ref{0322:rem1} that $\bar C^\bullet(R,M)$ is a subcomplex of
the complex $C^\bullet(R,M)$,
and from Remark \ref{0322:rem2} that $\bar C_\bullet(R,M)=C_\bullet(R,M)/T_\bullet$
is a Gerstenhaber factor algebra of $C_\bullet(R,M)$.
It is immediate to check from \eqref{0319:eq1} that $\iota_X(c)=0$
if $X\in T_\bullet$ and $c\in\bar C^\bullet(R,M)$.
Hence, we have the induced calculus structure $(\bar C_\bullet(R,M),\bar C^\bullet(R,M))$,
with the canonical morphism of calculus structures 
$(\bar C_\bullet(R,M),\bar C^\bullet(R,M))\to(C_\bullet(R,M),C^\bullet(R,M))$.
\end{remark}

\begin{remark}\label{0324:rem1}
We can define another calculus structure 
$(\tilde C_\bullet(R,M),\tilde C^\bullet(R,M))$ associated to the $R$-module $M$,
called the \emph{basic Lie conformal algebra calculus structure}.
The space of basic $k$-cochains $\tilde C^k(R,M)$
is the space of $\mb F$-linear maps: 
$c:\, R^{\otimes k}\to\mb F[\lambda_1,\dots,\lambda_k]\otimes M$,
satisfying the same sesquilinearity and skewsymmetry conditions as for $C^k(R,M)$.
The differential $d$ on $\tilde C^\bullet(R,M)$ is defined by the same formula \eqref{100311:eq1}
as for the complex $C^\bullet(R,M)$.
The space of $k$-chains $\tilde C_k(R,M)$ is defined as the quotient 
of the space $R^{\otimes k}\otimes M[[x_1,\dots,x_k]]$ 
by the same sesquilinearity and skewsymmetry relations as for $C_k(R,M)$.
The structure of Gerstenhaber algebra on $\tilde C_\bullet(R,M)$ is given by the same formulas
\eqref{0312:eq1} and \eqref{0312:eq12} as for $C_\bullet(R,M)$.
We then define the contraction map 
$\iota_\cdot:\,\tilde C_\bullet(R,M)\to\End(\tilde C^\bullet(R,M))$
by formula \eqref{0319:eq1},
and the same arguments (in a simpler form) as in the proof of Theorem \ref{0317:th}
show that $(\tilde C_\bullet(R,M),\tilde C^\bullet(R,M))$
is a calculus structure.
Moreover, we have an obvious morphism of calculus structures
$(\tilde C_\bullet(R,M),\tilde C^\bullet(R,M))\to(C_\bullet(R,M),C^\bullet(R,M))$.
\end{remark}

\subsection{Calculus structure for a Lie conformal algebra complex and a reduction 
of the calculus structure for a Lie algebra complex}
\label{sec:4.5}

As before, let $R$ be a Lie conformal algebra and $M$ be a module over $R$
endowed with the structure of a commutative associative algebra
on which $\partial^M$ and $a_\lambda,\,a\in R$, act by derivations.
We have associated to the pair $(R,M)$ the calculus structure
$(C_\bullet(R,M),C^\bullet(R,M))$.
Recall also from Remark \ref{0322:rem3} that it has the ``torsion free''
subcalculus structure $(\bar C_\bullet,\bar C^\bullet)\to(C_\bullet(R,M),C^\bullet(R,M))$.

Furthermore, we assume that $R$ is of finite rank as an $\mb F[\partial]$-module,
so that the annihilation Lie algebra $\Lie_- R$ is a linearly compact Lie algebra.
By Proposition \ref{100310:prop}, $M$, endowed with the discrete topology, 
is a continuous module over $\Lie_-R$,
and moreover $\Lie_-R$ acts by derivations of the algebra $M$
and its action extends to the semidirect product $(\Lie_-R)\rtimes\mb F \partial$,
with $\partial$ acting as $\partial^M$.

Recall the calculus structure 
$(\hat\Delta_\bullet(\Lie_-R,M),\check\Delta^\bullet(\Lie_-R,M))$
from Section \ref{sec:3.2}.
The action of $\partial$ on $\Lie_-R$ and $M$
induces its natural action on both $\hat\Delta_\bullet(\Lie_-R,M)$
and $\check\Delta^\bullet(\Lie_-R,M))$, preserving the $\mb Z_+$-gradings.
More precisely, the action of $\partial$ on 
$\hat\Delta_k(\Lie_-R,M)=M\hat\otimes\hat\bigwedge^k\Lie_- R$
is given by $\partial(f\otimes X)=(\partial^M f)\otimes X+f\otimes\partial X$,
and its action on 
$\check\Delta^k(\Lie_-R,M))=\Hom^c_{\mb F}(\hat\bigwedge^k\Lie_- R,M)$
is given by $(\partial\omega)(X)=\partial^M(\omega(X))-\omega(\partial X)$.
It is immediate to check that the action of $\partial$ on $\check\Delta^\bullet(\Lie_-R,M)$
commutes with the action of the differential $d$ in \eqref{0301:eq1}.
Hence we can consider the complex 
$(\check\Delta^\bullet(\Lie_-R,M)/\partial\check\Delta^\bullet(\Lie_-R,M),d)$.
Furthermore, for $a\in\hat\Delta_k(\Lie_-R,M)$, we have the identity 
$$
[\partial,\iota_a]=\iota_{\partial a}\,.
$$
Hence, the Gerstenhaber subalgebra $\hat\Delta_\bullet(\Lie_-R,M)^\partial$ from 
Example \ref{0321:ex} is the kernel of the action of $\partial$
on $\hat\Delta_\bullet(\Lie_-R,M)$.
Now we can consider the reduced calculus structure
$(\hat\Delta_\bullet(\Lie_-R,M)^\partial,\check\Delta^\bullet(\Lie_-R,M)/\partial\check\Delta^\bullet(\Lie_-R,M))$.

In this section we will relate all these calculus structures.
We define a $\mb Z_+$-grading preserving linear map 
$\Phi_\bullet:\,C_\bullet(R,M)\to\hat\Delta_\bullet(\Lie_-R,M)$
by the following formula
\begin{equation}\label{0322:eq1}
\begin{array}{l}
\displaystyle{
\Phi_k(a_1\otimes\cdots\otimes a_k\otimes\phi(x_1,\dots,x_k)) 
}\\
\displaystyle{
=\sum_{m_1,\cdots,m_k\in\mb Z_+}
a_{1,m_1}\wedge\cdots\wedge a_{k,m_k}\otimes
\frac{\partial_{x_1}^{m_1}\cdots\partial_{x_k}^{m_k}}{m_1!\cdots m_k!}\phi\,\big|_{x_1=\cdots=x_k=0}\,.
}
\end{array}
\end{equation}
Similarly, we define a $\mb Z_+$-grading preserving linear map 
$\Psi^\bullet:\,\check\Delta^\bullet(\Lie_-R,M)\to C^\bullet(R,M)$
by the following formula
\begin{equation}\label{0322:eq2}
(\Psi^k\omega)_{\lambda_1,\cdots,\lambda_k}(a_1,\cdots,a_k)
=
\!\!\!\sum_{m_1,\cdots,m_k\in\mb Z_+}\!\!\!
\frac{\lambda_1^{m_1}\cdots\lambda_k^{m_k}}{m_1!\cdots m_k!}
\omega(a_{1,m_1}\wedge\cdots\wedge a_{k,m_k})\,.
\end{equation}

\begin{theorem}\label{0322:th}
Let $R$ be a Lie conformal algebra of finite rank as an $\mb F[\partial]$-module,
and let $M$ be a module over $R$
endowed with the structure of a commutative associative algebra
on which $\partial^M$ and $a_\lambda,\,a\in R$, act by derivations.
Then the maps $\Phi_\bullet$ and $\Psi^\bullet$ defined by \eqref{0322:eq1} and \eqref{0322:eq2}
give a morphism of calculus structures
\begin{equation}\label{0324:eq8}
(\hat\Delta_\bullet(\Lie{}_-R,M),\check\Delta^\bullet(\Lie{}_-R,M))\to(C_\bullet(R,M),C^\bullet(R,M))\,,
\end{equation}
which induces a calculus structure isomorphism 
$$
(\hat\Delta_\bullet(\Lie{}_-R,M)^\partial,
\check\Delta^\bullet(\Lie{}_-R,M)/\partial\check\Delta^\bullet(\Lie{}_-R,M))
\simeq
(\bar C_\bullet(R,M),\bar C^\bullet(R,M))\,.
$$
\end{theorem}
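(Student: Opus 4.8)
The plan is to prove the two assertions in turn: first that $(\Phi_\bullet,\Psi^\bullet)$ is a morphism of calculus structures in the sense of Definition \ref{0321:def}, and then that the induced maps on the reduced, resp. torsionless, structures are bijective. The guiding principle throughout is that $\Phi_\bullet$ and $\Psi^\bullet$ are the Taylor-expansion and generating-function maps implementing the dictionary $a_\lambda=\sum_{n}\frac{\lambda^n}{n!}a_n$ relating the $\lambda$-action of $R$ to the action of $\Lie_-R$. I would first check that $\Phi_\bullet$ is well defined, i.e. respects the relations defining $C_k(R,M)$: sesquilinearity uses the identity $(\partial a)_n=-na_{n-1}$ in $\Lie_-R$ together with the differentiation $\partial_{x_i}\phi$, while skewsymmetry holds because $a_{1,m_1}\wedge\cdots\wedge a_{k,m_k}$ is alternating in $\hat\bigwedge\Lie_-R$ whereas the Taylor coefficients are symmetric under the simultaneous permutation \eqref{0318:eq2}--\eqref{0318:eq3}. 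That $\Phi_\bullet$ respects the wedge product \eqref{0312:eq1} is immediate, since the Taylor coefficient of a product of series in disjoint variables factors as a product of Taylor coefficients.

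The delicate point is that $\Phi_\bullet$ must intertwine the bracket \eqref{0312:eq12} with the Schouten bracket \eqref{0225:eq6} of $\hat\Delta_\bullet(\Lie_-R,M)$. Here I would pass to the basic chain complex $\tilde C_\bullet(R,M)$ of Remark \ref{0324:rem1}, on which the same formula \eqref{0322:eq1} defines a map to $\hat\Delta_\bullet$: unlike $C_\bullet(R,M)$, the algebra $\tilde C_\bullet$ is generated in degrees $\le 1$ under $\wedge$ (using that $M$ is unital), so since both brackets obey the Leibniz rule \eqref{Leibniz}, the homomorphism property reduces inductively to degrees $\le 1$, which is exactly the matching of \eqref{0312:eq7} with the Lie algebroid bracket of Example \ref{0219:ex1} established in \cite{DSK}. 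One then transfers the conclusion to $C_\bullet$ along the natural Gerstenhaber homomorphism $C_\bullet\hookrightarrow\tilde C_\bullet$ induced by $\mc M_k\subset M[[x_1,\dots,x_k]]$. Dually, I would verify that $\Psi^\bullet$ is well defined and commutes with the differentials: the generating-function substitution turns the Lie algebra cochain differential \eqref{0301:eq1} for $\Lie_-R$ into the Lie conformal algebra differential \eqref{100311:eq1}, with $[a_{i,m},a_{j,n}]$ reproducing $[a_{i\lambda_i}a_j]$ and the sign reconciliation $(-1)^{i+j}(-1)^{i-1}=(-1)^{j+1}$. The compatibility \eqref{0321:eq1}, namely $\iota_X(\Psi^\bullet\omega)=\Psi^\bullet(\iota_{\Phi_\bullet X}\omega)$, I would then check directly: applying $\phi(\partial_{\lambda_1},\dots,\partial_{\lambda_h})$ as in \eqref{0319:eq1} to $(\Psi^\bullet\omega)_\lambda$ extracts precisely the Taylor coefficients of $\phi$, which are the $M$-coefficients occurring in $\Phi_\bullet X$, so comparison with the contraction \eqref{0304:eq2} on $\hat\Delta_\bullet$ followed by $\Psi^\bullet$ gives equality, the factors $(-1)^{h(h-1)/2}$ in \eqref{0319:eq1} and \eqref{0304:eq2} matching automatically.

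For the reduced isomorphism I would first carry out a bookkeeping step. Since torsion elements $a\in\Tor R$ satisfy $a_n=0$ in $\Lie_-R$, the map $\Phi_\bullet$ annihilates the ideal $T_\bullet$ (Remark \ref{0322:rem2}), and a short computation using $(\partial a)_n=-na_{n-1}$ shows $\partial\circ\Phi_\bullet=0$, so $\Phi_\bullet$ lands in $\hat\Delta_\bullet(\Lie_-R,M)^\partial$; indeed $\partial$-invariance of the image is exactly the defining condition \eqref{0318:eq1} of $\mc M_k$. Dually, $\Psi^\bullet$ lands in $\bar C^\bullet(R,M)$ (again because $a_n=0$ for torsion $a$) and kills $\partial\check\Delta^\bullet$, since in the target $\partial^M$ acts as $-(\lambda_1+\cdots+\lambda_k)$ and the two contributions to $\Psi^\bullet(\partial\omega)$ cancel. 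This yields the induced morphism $\bar\Phi_\bullet\colon\bar C_\bullet\to\hat\Delta_\bullet^\partial$ and $\bar\Psi^\bullet\colon\check\Delta^\bullet/\partial\check\Delta^\bullet\to\bar C^\bullet$; once these are shown bijective the calculus-structure axioms on the reduced structures are inherited from the already-established morphism, so nothing further needs checking. To prove bijectivity I would reduce to the free case $R=\mb F[\partial]\otimes U$ (where $\bar C_\bullet=C_\bullet$ and $\Lie_-R\cong U[[t]]$) and show degreewise that the Taylor/generating-function correspondence is a bijection between the $\mc M_k$-data and the $\partial$-invariants, finiteness of rank and hence linear compactness guaranteeing that all sums are finite and the relevant pairing nondegenerate.

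I expect the bijectivity of $\bar\Phi_\bullet$ and $\bar\Psi^\bullet$ in all degrees to be the main obstacle. Unlike the morphism properties, it does \emph{not} reduce to degrees $\le1$: the $\partial$-invariants of the completed symmetric algebra $\hat\Delta_\bullet(\Lie_-R,M)$ are genuinely not generated by their degree-$\le1$ part, since already in degree two there are invariants not of the form $\Phi_1(a\otimes f)\wedge\Phi_1(b\otimes g)$, mirroring the fact that a general $\phi\in\mc M_k$ is not separable into a product of one-variable functions. One must therefore analyze the $\partial$-invariants directly through the generating-function correspondence, and it is precisely here that the finite-rank hypothesis, guaranteeing linear compactness of $\Lie_-R$, is indispensable.
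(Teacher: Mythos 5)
Your proposal is essentially correct, and its overall skeleton (well-definedness of $\Phi_\bullet$ and $\Psi^\bullet$, compatibility with $\wedge$, $d$ and the contractions, then the image/kernel analysis giving the reduced isomorphism) coincides with the paper's proof; the one place where you genuinely diverge is the Gerstenhaber-bracket property of $\Phi_\bullet$. The paper proves it by direct expansion: it writes the Schouten bracket on $\hat\Delta_\bullet(\Lie_-R,M)$ explicitly (formula \eqref{0527:eq1}, i.e.\ \eqref{0225:eq6} corrected by two terms coming from the action of $\Lie_-R$ on coefficients), expands \eqref{0312:eq12} using $[{a_i}\,_{\partial_{x_i}}a_j]=\sum_{l}(a_{i(l)}a_j)\partial_{x_i}^l/l!$ and \eqref{0606:eq1}, and matches the three groups of terms under $\Phi_{k-1}$. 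You instead reduce to degree $\leq 1$ via the basic complex $\tilde C_\bullet$ and the Leibniz rule, quoting \cite{DSK} for the 1-chain matching; this is shorter and more structural, and it mirrors the device the paper itself uses in the proof of Theorem \ref{0317:th}. One caveat: your assertion that $\tilde C_\bullet$ is generated in degrees $\leq 1$ under $\wedge$ is only true topologically --- a product of 1-chains has separable coefficient series $\phi_1(x_1)\cdots\phi_k(x_k)$, so only monomial chains and their finite sums are honest products. To cover a general $\phi\in M[[x_1,\dots,x_k]]$ you must add that each Taylor coefficient of either side of $\Phi_{k-1}([X,Y])=[\Phi_\bullet(X),\Phi_\bullet(Y)]$ depends on only finitely many Taylor coefficients of $X$ and $Y$ (both brackets are coefficient-wise finitary), so the identity extends from monomial chains by linearity and continuity; this is fixable but should be said, and it is precisely why the completed target $\hat\Delta_\bullet$ is used. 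Finally, the bijectivity that you single out as the main obstacle is in the paper obtained exactly by the mechanism you name, and is no harder than the rest: the intertwining $\partial\circ\Phi_k=\Phi_k\circ\big(\partial^M-\partial_{x_1}-\cdots-\partial_{x_k}\big)$ on basic chains shows that the image of $C_\bullet$ is $\ker\partial=\hat\Delta_\bullet^\partial$ (condition \eqref{0318:eq1} is precisely $\partial$-invariance, and in the finite-rank case basic chains fill out $\hat\Delta_\bullet$), while $\ker\Phi_\bullet=T_\bullet$ by Remark \ref{0322:rem2}; dually, $\Psi^k(\partial\omega)=(\partial+\lambda_1+\cdots+\lambda_k)\Psi^k(\omega)$ gives $\ker\Psi^k=\im\partial$ and $\im\Psi^\bullet=\bar C^\bullet(R,M)$. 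In particular no nondegenerate-pairing argument is involved here (that tool belongs to Proposition \ref{0325:prop}); the finite-rank hypothesis enters only through linear compactness of $\Lie_-R$ and the decomposition \eqref{0322:eq3}.
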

\begin{proof}
First, we need to check that the maps $\Phi_\bullet$ and $\Psi^\bullet$ are well-defined. Let 
$X=a_1\otimes\cdots \otimes a_h\otimes\phi\in C_h(R,M)$ and $\omega\in \check\Delta^\bullet(\Lie{}_-R,M)$.
Using the fact that $\partial$ acts by $-\partial_t$ on $\Lie{}_-R$, we have 
$$
\Phi_h(a_1\otimes\cdots\partial a_i\cdots \otimes a_h\otimes\phi) =
$$
$$
=-\sum_{m_1,\cdots,m_h\in\mb Z_+}
a_{1,m_1}\wedge\cdots m_i a_{i,m_i-1} \cdots \wedge a_{h,m_h}\otimes
\frac{\partial_{x_1}^{m_1}\cdots\partial_{x_h}^{m_h}}{m_1!\cdots m_h!}\phi\,\big|_{x_1=\cdots=x_h=0}
$$
$$
=-\sum_{m_1,\cdots,m_h\in\mb Z_+}
a_{1,m_1}\wedge\cdots \wedge a_{h,m_h}\otimes
\frac{\partial_{x_1}^{m_1}\cdots\partial_{x_i}^{m_i+1}\cdots\partial_{x_h}^{m_h}}{m_1!\cdots m_h!}\phi\,\big|_{x_1=\cdots=x_h=0}
$$
$$
=\Phi_h(-a_1\otimes\cdots \otimes a_h\otimes\partial_{x_i}\phi)\,,
$$
thus proving that $\Phi_h(X)$ satisfies the sesquilinearity condition in $C_h(R,M)$. The skewsymmetry property 
follows immediately by the skewsymmetry of the wedge product. Likewise, using $\partial a_m = -m a_{m-1}$ and 
relabelling the indices, it is immediate to check that $\Psi^k(\omega)$ fulfills the sesquilinearity and skewsymmetry
conditions in $C^k(R,M)$. By the same token, one has that the action of $\partial$ on 
$\hat\Delta_\bullet(\Lie{}_-R,M)$ is given by
$$
\partial \Phi_h(a_1\otimes\cdots \otimes a_k\otimes \phi)= \Phi_h\left(a_1\otimes\cdots \otimes a_k
\otimes (\partial^M-\partial_{x_1}-\cdots-\partial_{x_h})\phi\right)\,.
$$
Recalling condition \eqref{0318:eq1} satisfied by $\phi\in \mc M_h$, we conclude that the image of 
$\Phi_\bullet$ is $\ker(\partial)$. Moreover, the kernel of $\Phi_\bullet$ is precisely the torsion part 
in $C_\bullet(R,M)$. Indeed, as pointed out in Remark \ref{0322:rem2}, all torsion is contained in $C_1(R,M)$
and since $R$ is assumed to be of finite rank, it follows that $\ker(\Phi_\bullet)=\big(\bigoplus_i M/(P_i(\partial)M)\big)$.

Next, we prove that $\Phi_\bullet$ is a homomorphism of Gerstenhaber algebras. The identity 
$\Phi_{h+k}(a\wedge b) = \Phi_h(a)\wedge \Phi_k(b)$ follows immediately by \eqref{0312:eq1}
and definition \eqref{0322:eq1}. To see that the Gerstenhaber bracket is preserved by $\Phi_\bullet$,
we first note that the bracket on $\hat\Delta_\bullet(\Lie{}_-R,M)$ is given by \eqref{0225:eq6}, 
extended with two additional terms due to the non-trivial action of $\Lie{}_-R$ on the coefficient module $M$,
\begin{equation}\label{0527:eq1}
\begin{array}{l}
\displaystyle{
\big[a_{1,m_1}\wedge\cdots \wedge a_{h,m_h}\otimes u
\,,\,a_{h+1,m_{h+1}} \wedge\cdots \wedge a_{k,m_k}\otimes v\big] 
}\\
\displaystyle{
=
\sum_{i=1}^h\sum_{j=h+1}^k (-1)^{h+j+1}
a_{1,m_1} \wedge\cdots [{a_{i,m_i}}\,,a_{j,m_j}] 
\stackrel{j}{\check{\cdots}} \wedge a_{k,m_k}\otimes uv
}\\
\displaystyle{
+\! \sum_{i=1}^h (-1)^{h+i} a_{1,m_1} \wedge\stackrel{i}{\check{\cdots}} \wedge a_{k,m_k}
\otimes ( {a_{i,m_i}}.v)u}\\
\displaystyle{
+\! \sum_{i=h+1}^k (-1)^{h+i} a_{1,m_1} \wedge\stackrel{i}{\check{\cdots}} \wedge a_{k,m_k}
\otimes( {a_{i,m_i}}.u)v\,,
}
\end{array}
\end{equation}
where $[{a_{i,m_i}}\,,a_{j,m_j}]$ is defined by \eqref{commutator} and $u,v\in M$. Here we have 
shifted the commutator in the RHS of \eqref{0225:eq6} to position $i$, changing the overall sign by $(-1)^{i-1}$. Expanding the first term in \eqref{0312:eq12} using 
$[{a_i}\, _{\partial_{x_i}}a_j]=\sum_{l\in\mb Z_+}(a_{i(l)}a_j)\frac{\partial_{x_i}^l}{l!}$, a straightforward 
computation yields
\begin{equation*}
\begin{array}{l}
\displaystyle{
\sum_{m_1,\cdots,m_k\in\mb Z_+}\sum_{i=1}^h\sum_{j=h+1}^k (-1)^{h+j+1} a_1\otimes \cdots\sum_{l\in\mb Z_+}\binom{m_i}{l}(a_{i(l)}a_j)
\cdots\otimes a_k\otimes
}\\
\,\,\,\,\,\,\,\,\,\,\,\,\,\,\,\,\,\,\,\,\,\,\,\,\,\,\,
\displaystyle{
\otimes\phi_{(m_1,\dots,m_h)}\psi_{(m_{h+1},\dots,m_k)}\frac{x_1^{m_1}\cdots x_i^{m_i+m_j-l} \cdots
x_{j-1}^{m_{j-1}}x_j^{m_{j+1}}\cdots x_{k-1}^{m_k}}{m_1!\cdots m_k!}\,,
}
\end{array}
\end{equation*}
which corresponds to the first term in \eqref{0527:eq1} under the map $\Phi_{k-1}$. Likewise, expanding
the second term in \eqref{0312:eq12} using \eqref{0606:eq1}, we get
\begin{equation*}
\begin{array}{l}
\displaystyle{
\sum_{m_1,\cdots,m_k\in\mb Z_+} \sum_{i=1}^h (-1)^{h+i} a_1\otimes\stackrel{i}{\check{\cdots}}\otimes a_k
\otimes \big(a_{i(m_i)}\psi_{(m_{h+1},\dots,m_k)}\big)\phi_{(m_1,\dots,m_h)}
}\\
\,\,\,\,\,\,\,\,\,\,\,\,\,\,\,\,\,\,\,\,\,\,\,\,\,\,\,
\,\,\,\,\,\,\,\,\,\,\,\,\,\,\,\,\,\,\,\,\,\,\,\,\,\,\,
\,\,\,\,\,\,\,\,\,\,\,\,\,\,\,\,\,\,\,\,\,\,\,\,\,\,\,
\,\,\,\,\,\,\,\,\,\,\,\,\,\,\,\,\,\,\,\,\,\,\,\,\,\,\,
\displaystyle{
\frac{x_1^{m_1}\cdots
x_{i-1}^{m_{i-1}}x_i^{m_{i+1}}\cdots x_{k-1}^{m_k}}{m_1!\cdots m_k!}\,,
}
\end{array}
\end{equation*}
which is sent by $\Phi_{k-1}$ to the second term in \eqref{0527:eq1}, and similarly for the last term.

We show next that $\Psi^\bullet$ commutes with the action of the differentials $d$ and hence defines
a morphism of complexes. By \eqref{0322:eq2}, we have
$$
\Psi^{k+1}(d\omega)_{\lambda_1,\cdots,\lambda_{k+1}}(a_1,\dots,a_{k+1}) =
$$
$$=
\!\!\!\sum_{m_1,\cdots,m_{k+1}\in\mb Z_+}\!\!\!
\frac{\lambda_1^{m_1}\cdots\lambda_k^{m_{k+1}}}{m_1!\cdots m_{k+1}!}
(d\omega)(a_{1,m_1}\wedge\cdots\wedge a_{k+1,m_{k+1}})\,,
$$
where $d\omega$ is given by the formula \eqref{0301:eq1}. Combining \eqref{100311:eq1} and \eqref{0606:eq1},
it is not hard to check that the first terms in $d\Psi^k(\omega)$ and $\Psi^{k+1}(d\omega)$ are equal. Expanding the last 
term in $d\Psi^k(\omega)$, we obtain
\begin{equation*}
\begin{array}{l}
\displaystyle{
\sum_{m_1,\cdots,m_{k+1}\in\mb Z_+}\sum_{i<j} (-1)^{j+1} \Psi^k(\omega)_{(m_1,\dots,m_{k+1})}
\Big(a_1,\cdots,\sum_{l\in\mb Z_+}(a_{i(l)}a_j)\frac{\lambda_i^l}{l!},\stackrel{j}{\check{\cdots}} ,a_{k+1}\Big)
}\\
\,\,\,\,\,\,\,\,\,\,\,\,\,\,\,\,\,\,\,\,\,\,\,\,\,\,\,
\,\,\,\,\,\,\,\,\,\,\,\,\,\,\,\,\,\,\,\,\,\,\,\,\,\,\,
\,\,\,\,\,\,\,\,\,\,\,\,\,\,\,\,\,\,\,\,\,\,\,\,\,\,\,
\displaystyle{
\frac{\lambda_1^{m_1}\cdots (\lambda_i+\lambda_j)^{m_i}\stackrel{j}{\check{\cdots}}\lambda_{k+1}^{m_{k+1}}}
{m_1!\cdots m_{k+1}!}\,.
}
\end{array}
\end{equation*}
Using the following identity,
$$
\frac{(\lambda_i+\lambda_j)^{m_i}\lambda_i^l}{m_i!l!}=\frac{1}{m_i!l!}\sum_{m_j\in\mb Z_+}
\binom{m_i}{m_j}\lambda_i^{m_i-m_j+l}\lambda_j^{m_j}=\sum_{m_j\in\mb Z_+}
\binom{\tilde m_i}{l}\frac{\lambda_i^{\tilde m_i}\lambda_j^{m_j}}{\tilde m_i!m_j!}\,,
$$
where $m_i=\tilde m_i+m_j-l$, we conclude that also the last terms in $d\Psi^k(\omega)$ and $\Psi^{k+1}(d\omega)$
coincide, thus proving the claim.

The image of $\Psi^\bullet$ corresponds to the free part in $C^\bullet(R,M)$. Indeed, by Remark \ref{0322:rem1}, 
only $C^1(R,M)$ contains torsion and assuming that $R$ decomposes as in \eqref{0322:eq3}, it is clear by \eqref{0322:eq2} that $\im(\Psi^\bullet)=\bar C^\bullet(R,M)$. Moreover, it is not hard to check that the action  of $\partial$ on $\check\Delta^\bullet(\Lie{}_-R,M)$, 
given by $(\partial\omega)(X)=\partial^M(\omega(X))-\omega(\partial X)$, leads to
$$
\big(\Psi^k(\partial\omega)\big)_{\lambda_1,\cdots,\lambda_k}(a_1,\cdots,a_k) 
= (\partial+\lambda_1+\cdots+\lambda_k)
\big(\Psi^k(\omega)\big)_{\lambda_1,\cdots,\lambda_k}(a_1,\cdots,a_k) \,.
$$
Since $\mb F_-[\lambda_1,\dots,\lambda_k]\otimes_{\mb F[\partial]}\mc V$
is the quotient of $\mb F[\lambda_1,\dots,\lambda_k]\otimes_{\mb F}\mc V$
by the image of $(\partial+\lambda_1+\cdots+\lambda_k)$,
it follows that $\ker(\Psi^k)=\im(\partial)$.

Finally, we check that the contraction operators are compatible with the homomorphisms, namely that 
$\Psi^\bullet(\iota_{\Phi_\bullet(X)}(\omega))=\iota_X(\Psi^\bullet(\omega))$. 
By \eqref{0304:eq2}, it follows that the coefficient of $\lambda_{h+1}^{m_{h+1}}\cdots\lambda_k^{m_k}$
in $\Psi^{k-h}(\iota_{\Phi_\bullet(X)}(\omega))_{\lambda_{h+1},\cdots,\lambda_k}(a_{h+1},\cdots,
a_k)$ is
$$
\omega(a_{1,m_1}\wedge\cdots\wedge a_{k,m_k})
\frac{\partial_{x_1}^{m_1}\cdots\partial_{x_k}^{m_k}}{m_1!\cdots m_k!}\phi(x_1,\cdots,x_h)\,\big|_{x_1=\cdots=x_k=0}\,.
$$  
Likewise, recalling \eqref{0319:eq1}, the corresponding coefficient in the polynomial \\
$\iota_X(\Psi^k(\omega))_{\lambda_{h+1},\cdots,\lambda_k}(a_{h+1},\cdots,a_k)$ is
$$
\omega(a_{1,m_1}\wedge\cdots\wedge a_{k,m_k})
\phi(\partial_{\lambda_1},\cdots,\partial_{\lambda_h})\frac{\lambda_1^{m_1}\cdots\lambda_h^{m_h}}{m_1!\cdots m_h!}\,\big|_{\lambda_1=\cdots=\lambda_k=0}\,,
$$  
which, by noting $\partial^n_\lambda \lambda^m|_{\lambda=0}  = \partial^m_x x^n|_{x=0} = m!\delta_{m,n}$, proves the claim.
\flushright\qed
\end{proof}

\begin{remark}\label{0324:rem2}
Recall the calculus structure $(\tilde C_\bullet(R,M),\tilde C^\bullet(R,M))$
introduced in Remark \ref{0324:rem1}.
Formulas \eqref{0322:eq1} and \eqref{0322:eq2}
define an isomorphism of calculus structures
$(\hat\Delta_\bullet(\Lie{}_-R,M),\check\Delta^\bullet(\Lie{}_-R,M))
\simeq(\tilde C_\bullet(R,M),\tilde C^\bullet(R,M))$.
This isomorphism induces the morphism \eqref{0324:eq8}.
In other words, we have the following commutative diagram of calculus structures:
$$
\UseTips
\xymatrix{
(\hat\Delta_\bullet,\check\Delta^\bullet) \ar[d] \ar[r]^{\sim}
& (\tilde C_\bullet,\tilde C^\bullet) \ar[d] & \\
(\hat\Delta_\bullet^\partial,
\check\Delta^\bullet/\partial\check\Delta^\bullet)  \ar[r]^{\sim}
& (\bar C_\bullet,\bar C^\bullet) \ar[r]
& (C_\bullet,C^\bullet)\,.
}
$$
\end{remark}


\section{The complex of variational calculus}
\label{sec:5}

\subsection{de Rham complex over an algebra of differential functions}
\label{sec:5.1}

\begin{definition}\label{0323:def}\emph{\cite{DSK}}
Let $I = \{1,\dots,l\}$ be a finite index set. 
An \emph{algebra of differential functions} $\mathcal V$ in the variables $u_i,\,i\in I$, 
is a differential algebra, i.e. a unital, commutative, associative algebra 
with a derivation $\partial:\mathcal V\to \mathcal V$, together with commuting 
derivations 
$\frac{\partial}{\partial u_i^{(n)}}:\,\mathcal V \to \mathcal V$, 
such that
\begin{equation}\label{derivation}
\left[\frac{\partial}{\partial u_i^{(n)}}, \partial\right]= \frac{\partial}{\partial u_i^{(n-1)}}\,,
\end{equation}
and, for any $f \in \mathcal V$, $\frac{\partial f}{\partial u_i^{(n)}} = 0$
for all but finitely many $i\in I, n\in\mb Z_+$.
\end{definition}

The image of an element $f\in \mathcal V$ under the quotient map $\mathcal V \to \mathcal V/\partial \mathcal V$ 
is denoted, as before, $\int f$.

\begin{example}\label{0323:ex1}
The polynomial algebra
$R_\ell = \mb F[u_i^{(n)}]_{i\in I, n\in\mb Z_+}$
is an algebra of differential functions with $\partial u_i^{(n)}=u_i^{(n+1)}$
and the usual $\partial/\partial u_i^{(n)}$.
\end{example}
\begin{example}\label{0323:ex2}
The polynomial algebra $R_\ell[x]$ is an algebra of differential functions extension of $R_\ell$,
with $\partial x=1$.
\end{example}

A \emph{vector field} is a derivation of $\mathcal V$ of the form
\begin{equation}
X = \sum_{i\in I, n\in \mb Z_+} P_i^n\frac{\partial}{\partial u^{(n)}_i} , \ \ \ P^n_i \in \mathcal V\,.
\end{equation}
Obviously the space of vector fields is closed under the commutator,
and we denote the resulting Lie algebra by $\Vect(\mc V)$.
In fact, the pair $(\mc V,\Vect(\mc V))$ is a Lie algebroid,
with the obvious actions of $\mc V$ on $\Vect(\mc V)$ and of $\Vect(\mc V)$ on $\mc V$.
Hence we can consider the corresponding Gerstenhaber algebra
of \emph{polyvector fields} $\tilde\Omega_\bullet(\mc V)=S_{\mc V}(\Pi\Vect(\mc V))$,
given by Proposition \ref{schouten}.
Its elements have the following form:
\begin{equation}\label{0323:eq2}
X=\sum_{\substack{i_1,\cdots,i_k\in I \\ n_1,\cdots,n_k\in\mb Z_+}} 
P^{n_1\dots n_k}_{i_1\dots i_k} 
\frac{\partial}{\partial u^{(n_1)}_{i_1}}\wedge \cdots 
\wedge \frac{\partial}{\partial u^{(n_k)}_{i_k}}
\,\,,\,\,\,\,
P^{n_1\dots n_k}_{i_1\dots i_k}\in\mc V\,.
\end{equation}

The associative product in $\tilde\Omega_\bullet(\mc V)$ is just the wedge product,
and the bracket \eqref{0225:eq6} becomes in this case:
\begin{equation}\label{0325:eq1}
\begin{array}{c}
\displaystyle{
\big[
P\frac{\partial}{\partial u^{(m_1)}_{i_1}}\wedge \cdots \wedge \frac{\partial}{\partial u^{(m_h)}_{i_h}},
Q\frac{\partial}{\partial u^{(m_{h+1})}_{i_{h+1}}}\wedge \cdots \wedge \frac{\partial}{\partial u^{(m_k)}_{i_k}}
\big]
} \\
\displaystyle{
=
\sum_{\alpha=1}^h(-1)^{h+\alpha}
P\frac{\partial Q}{\partial u_{i_\alpha}^{(m_\alpha)}}
\frac{\partial}{\partial u^{(m_1)}_{i_1}}\wedge \stackrel{\alpha}{\check{\cdots}} 
\wedge \frac{\partial}{\partial u^{(m_k)}_{i_k}}
} \\
\displaystyle{
\,\,\,\,\,\,\,\,\,
+\sum_{\alpha=h+1}^k(-1)^{h+\alpha}
Q\frac{\partial P}{\partial u_{i_\alpha}^{(m_\alpha)}}
\frac{\partial}{\partial u^{(m_1)}_{i_1}}\wedge \stackrel{\alpha}{\check{\cdots}} 
\wedge \frac{\partial}{\partial u^{(m_k)}_{i_k}}\,.
}
\end{array}
\end{equation}

Note that there is a natural action of $\partial$ as a derivation of the Lie algebra $\Vect(\mc V)$,
given by $X\mapsto[\partial,X]$.
This action extends to a derivation of the Gerstenhaber algebra $\tilde\Omega_\bullet(\mc V)$.
Explicitly, if $X$ is as in \eqref{0323:eq2}, we have, using \eqref{derivation},
\begin{equation}\label{0324:eq2}
\partial(X)=\sum_{\substack{i_1,\cdots,i_k\in I \\ n_1,\cdots,n_k\in\mb Z_+}} 
\Big(\partial P^{n_1\dots n_k}_{i_1\dots i_k}
-\sum_{j=1}^k P^{n_1\dots n_{n_j+1}\cdots n_k}_{i_1\dots i_k}\Big)
\frac{\partial}{\partial u^{(n_1)}_{i_1}}\wedge \cdots 
\wedge \frac{\partial}{\partial u^{(n_k)}_{i_k}}\,.
\end{equation}
A polyvector field $X\in\tilde\Omega_\bullet(\mc V)$ is called \emph{evolutionary}
if $\partial(X)=0$.
Hence, $X$ in \eqref{0323:eq2} is an evolutionary polyvector field if and only if
\begin{equation}\label{0324:eq3}
\partial P^{n_1\dots n_k}_{i_1\dots i_k}
=\sum_{j=1}^k P^{n_1\dots n_{n_j+1}\cdots n_k}_{i_1\dots i_k}
\end{equation}

The \emph{de Rham complex} over $\mathcal V$ is the 
free unital commutative associative superalgebra over $\mc V$
with odd generators $du_i^{(n)},\,i\in I,n\in\mb Z_+$.
It consists of elements of the form
\begin{equation}\label{0324:eq4}
\tilde \omega = 
\frac1{k!}\sum_{\substack{i_1,\cdots,i_k\in I \\ m_1,\cdots,m_k\in\mb Z_+}}
f^{m_1\dots m_k}_{i_1 \dots i_k}
du^{(m_1)}_{i_1}\wedge \dots \wedge du^{(m_k)}_{i_k}
\,\,, \,\,\,\,
f^{m_1\dots m_k}_{i_1 \dots i_k} \in \mathcal V \,,
\end{equation}
where all but finitely many coefficients $f^{m_1\dots m_k}_{i_1 \dots i_k}$ are zero.
It is a $\mb Z_+$-graded complex, with the differential $d$ given by the usual formula:
\begin{equation}\label{0324:eq1}
d\tilde \omega = 
\frac1{k!} \sum_{j\in I, n\in \mb Z_+}
\sum_{\substack{i_1,\cdots,i_k\in I \\ m_1,\cdots,m_k\in\mb Z_+}} 
\frac{\partial f^{m_1\dots m_k}_{i_1 \dots i_k}}{\partial u^{(n)}_{i_j}}
d u^{(n)}_{j}
\wedge d u^{(m_1)}_{i_1}\wedge \dots \wedge d u^{(m_k)}_{i_k} \,.
\end{equation}
Clearly, $d$ is an odd derivation of degree $1$ and one checks easily that $d^2=0$.

Given a vector field $X\in\Vect(\mc V)$, we define the \emph{contraction operator}
$\iota_X$ as the odd derivation of the superalgebra 
$\tilde\Omega^\bullet(\mc V)$ acting trivially on $\mc V$
and such that $\iota_X(du_i^{(n)})=X(u_i^{(n)})$.
Furthermore, for $f\in\mc V$, we let $\iota_f$ be the operator 
of left multiplication by $f$ on $\tilde\Omega^\bullet(\mc V)$.
Recalling Definition \ref{0220:def1}, one easily checks that the resulting map 
$\iota_\cdot:\,\Pi\Vect(\mc V)\oplus\mc V\to\End(\tilde\Omega^\bullet(\mc V))$
defines a structure of a $(\Vect(\mc V),\mc V)$-complex on $(\tilde\Omega^\bullet(\mc V),d)$.
Hence, by Theorem \ref{0228:th}, this extends to a calculus structure
$(\tilde\Omega_\bullet(\mc V),\tilde\Omega^\bullet(\mc V))$.

The action of $\partial$ on $\mc V$ extends to an action on the de Rham complex
$\tilde\Omega^\bullet(\mc V)$
as an even derivation of the associative product such that $\partial(du_i^{(n)})=d(u_i^{(n+1)})$.
It is immediate to check that $\partial$ commutes with the action of $d$ in \eqref{0324:eq1}.
Hence, we can consider the reduced calculus structure 
$(\tilde\Omega_\bullet(\mc V)^\partial,\tilde\Omega^\bullet(\mc V)/\partial\tilde\Omega^\bullet(\mc V))$
(see Example \ref{0321:ex}),
which we call the \emph{variational calculus structure},
and denote by $(\Omega_\bullet(\mc V),\Omega^\bullet(\mc V))$.
It is easy to check that 
\begin{equation}\label{0325:eq6}
[\partial,\iota_X]=\iota_{\partial(X)}
\,\,,\,\,\,\,
X\in\tilde\Omega_\bullet(\mc V)\,,
\end{equation}
hence the Gerstenhaber algebra $\Omega_\bullet(\mc V)$
is the algebra of evolutionary polyvector fields.
The complex $(\Omega^\bullet(\mc V),d)$ is called the \emph{variational complex} \cite{GD}.

\subsection{Variational complex as a Lie conformal algebra complex}
\label{sec:5.2}

The connection between Lie conformal algebra calculus structure 
and the variational calculus is based on the following observation, \cite{DSK}.
Let $\mathcal V$ be an algebra of differential functions and consider the Lie conformal algebra 
$R = \oplus_{i\in I} \mb F[\partial]u_i$, with the zero $\lambda$-bracket. 
Then $\mc V$ is endowed with a structure of an $R$-module,
with the following $\lambda$-action:
\begin{equation}\label{0325:eq2}
u_{i\lambda}f
=
\sum_{n\in \mb Z_+} \lambda^n \frac{\partial f}{\partial u_i^{(n)}}
\,\,,\,\,\,\,
i\in I\,,
\end{equation}
and $R$ acts by derivations on the associative product in $\mathcal V$.
Recalling the construction in Section \ref{sec:4.4},
we consider the calculus structure $(C_\bullet(R,\mc V),C^\bullet(R,\mc V))$
for the $R$-module $\mc V$.

In this section we will identify it with the variational calculus structure,
and in the next section we will describe it more explicitly.

We define a map $\Phi_\bullet:\,C_\bullet(R,\mc V)\to\tilde\Omega_\bullet(\mc V)$
as follows.
Since $R=\bigoplus_i\mb F[\partial]u_i$ is a free $\mb F[\partial]$-module,
the space $C_k(R,M)$ is spanned by elements of the form
$a=u_{i_1}\otimes\cdots\otimes u_{i_k}\otimes\phi(x_1,\cdots,x_k)$,
with $i_1,\dots,i_k\in I$ and $\phi\in\mc M_k$.
Expanding the formal power series $\phi$ as
$$
\phi(x_1,\cdots,x_k)
=\sum_{n_1,\cdots,n_k\in\mb Z_+}
\frac{P^{n_1\dots n_k}_{i_1 \dots i_k}}{n_1!\cdots n_k!}x_1^{n_1}\cdots x_k^{n_k}
\,\,,\,\,\,\,
P^{n_1\dots n_k}_{i_1 \dots i_k}\in\mc V\,,
$$
we let 
\begin{equation}\label{0324:eq7}
\Phi_k(a)
=
\sum_{\substack{i_1,\cdots,i_k\in I \\ n_1,\cdots,n_k\in\mb Z_+}} 
P^{n_1\dots n_k}_{i_1\dots i_k} 
\frac{\partial}{\partial u^{(n_1)}_{i_1}}\wedge \cdots 
\wedge \frac{\partial}{\partial u^{(n_k)}_{i_k}}\,.
\end{equation}
Clearly, $\Phi_k$ is well defined and injective.
Moreover, the condition that $\phi\in\mc M_k$
exactly corresponds, in terms of the coefficients $P^{n_1\dots n_k}_{i_1 \dots i_k}$,
to identity \eqref{0324:eq3}.
Hence, the image of $\Phi_k$ is the space of evolutionary polyvector fields $\Omega_k(\mc V)$,
and $\Phi_\bullet$ induces an isomorphism of $\mb Z_+$-graded vector spaces
$\Phi_\bullet:\,C_\bullet(R,\mc V)\to\Omega_\bullet(\mc V)$.

Next, we define a map $\Psi^\bullet:\,\tilde\Omega^\bullet(\mc V)\to C^\bullet(R,\mc V)$.
Let $\tilde\omega\in\tilde\Omega^k(\mc V)$ be as in \eqref{0324:eq4}.
Given indices $i_1,\dots,i_k\in I$, we let
\begin{equation}\label{0324:eq5}
\begin{array}{l}
\big(\Psi^k(\tilde\omega)\big)_{\lambda_1,\cdots,\lambda_k}(u_{i_1},\cdots,u_{i_k}) \\
\displaystyle{
=
\sum_{m_1,\cdots,m_k\in\mb Z_+}
\lambda_1^{m_1}\cdots \lambda_k^{m_k} \langle f\rangle^{m_1\dots m_k}_{i_1 \dots i_k}
\in\mb F[\lambda_1,\dots,\lambda_k]\otimes\mc V\,,
}
\end{array}
\end{equation}
where $\langle f\rangle$ is the skewsymmetrization of $f$, i.e.
\begin{equation}\label{0325:eq4}
\langle f\rangle^{m_1\dots m_k}_{i_1 \dots i_k}
=\frac1{k!}
\sum_{\sigma\in S_k} f^{m_{\sigma(1)}\dots m_{\sigma(k)}}_{i_{\sigma(1)}\dots i_{\sigma(k)}}\,.
\end{equation}

Note that the RHS is a polynomial in the variables $\lambda_1,\dots,\lambda_k$
since, by assumption, all but finitely many coefficients $f^{m_1\dots m_k}_{i_1 \dots i_k}$ are zero.
Then $\Psi^k(\tilde\omega)\in C^k(R,\mc V)$ is defined by extending the above formula to
$R^{\otimes k}\to\mb F[\lambda_1,\dots,\lambda_k]\otimes \mc V$
by the sesquilinearity relations,
and composing it with the quotient map 
$\mb F[\lambda_1,\dots,\lambda_k]\otimes\mc V
\to\mb F_-[\lambda_1,\dots,\lambda_k]\otimes_{\mb F[\partial]}\mc V$.
Clearly, $\Psi^k(\tilde\omega)$ satisfies the skewsymmetry conditions in $C^k(R,\mc V)$,
thanks to the assumption that the coefficients $f^{m_1\dots m_k}_{i_1 \dots i_k}$ are skewsymmetric.
Hence $\Psi^k(\tilde\omega)$ lies in $C^k(R,\mc V)$.
Moreover, the map $\Psi^k$ is obviously surjective.
To study the kernel of the map $\Psi^k$, we need the following identity,
which can be easily checked directly:
\begin{equation}\label{0324:eq6}
\begin{array}{l}
\big(\Psi^k(\partial\tilde\omega)\big)_{\lambda_1,\cdots,\lambda_k}(u_{i_1},\cdots,u_{i_k}) \\
= (\partial+\lambda_1+\cdots+\lambda_k)
\big(\Psi^k(\tilde\omega)\big)_{\lambda_1,\cdots,\lambda_k}(u_{i_1},\cdots,u_{i_k})
\in\mb F[\lambda_1,\dots,\lambda_k]\otimes_{\mb F}\mc V\,.
\end{array}
\end{equation}
Recalling that $\mb F_-[\lambda_1,\dots,\lambda_k]\otimes_{\mb F[\partial]}\mc V$
is the quotient of $\mb F[\lambda_1,\dots,\lambda_k]\otimes_{\mb F}\mc V$
by the image of $(\partial+\lambda_1+\cdots+\lambda_k)$,
we deduce that $\ker(\Psi^k)=\im(\partial)$.
Thus $\Psi^\bullet$ factors through a bijective map of $\mb Z_+$-graded vector spaces
$\Psi^\bullet:\,\Omega^\bullet(\mc V)\to C^\bullet(R,\mc V)$.
\begin{theorem}\label{0324:th}
Let $\mc V$ be an algebra of differential functions in the variables $u_i,\,i\in I$.
Then the maps $\Phi_\bullet$ and $\Psi^\bullet$ defined by \eqref{0324:eq7} and \eqref{0324:eq5}
give a morphism of calculus structures
\begin{equation}\label{0324:eq9}
(\tilde\Omega_\bullet(\mc V),\tilde\Omega^\bullet(\mc V))\to(C_\bullet(R,\mc V),C^\bullet(R,\mc V))\,,
\end{equation}
which induces a calculus structure isomorphism 
$$
(\Omega_\bullet(\mc V),\Omega^\bullet(\mc V))
\simeq
(\bar C_\bullet(R,\mc V),\bar C^\bullet(R,\mc V))\,.
$$
\end{theorem}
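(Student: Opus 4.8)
The plan is to deduce the theorem from Theorem~\ref{0322:th}, applied to the finite rank Lie conformal algebra $R=\bigoplus_i\mb F[\partial]u_i$ and the module $M=\mc V$. Since that theorem already provides a morphism \eqref{0324:eq8} and a reduced isomorphism for the topological calculus structure $(\hat\Delta_\bullet(\Lie{}_-R,\mc V),\check\Delta^\bullet(\Lie{}_-R,\mc V))$ of the annihilation Lie algebra, the entire task is to identify the variational calculus structure $(\tilde\Omega_\bullet(\mc V),\tilde\Omega^\bullet(\mc V))$ with this topological one, and to check that the maps \eqref{0324:eq7} and \eqref{0324:eq5} correspond to \eqref{0322:eq1} and \eqref{0322:eq2} under the identification.

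First I would identify the underlying Lie algebroids. Because $R$ has zero $\lambda$-bracket, $\Lie{}_-R$ is abelian with topological basis $\{u_{i,m}\}_{i\in I,\,m\in\mb Z_+}$, and by Proposition~\ref{100310:prop} together with the $\lambda$-action \eqref{0325:eq2} the element $u_{i,m}$ acts on $\mc V$ as the derivation $m!\,\partial/\partial u_i^{(m)}$. Extending $\mc V$-linearly, the assignment $u_{i,m}\mapsto m!\,\partial/\partial u_i^{(m)}$ identifies the completed Lie algebroid $(\mc V\hat\otimes\Lie{}_-R,\mc V)$ of Example~\ref{0219:ex1} with $(\Vect(\mc V),\mc V)$: with the abelian bracket on $\Lie{}_-R$ the bracket of Example~\ref{0219:ex1} becomes $[fX,gY]=fX(g)Y-gY(f)X$, which is precisely the commutator of the corresponding vector fields. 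Passing to symmetric algebras over $\mc V$ then yields an isomorphism of Gerstenhaber algebras $\hat\Delta_\bullet(\Lie{}_-R,\mc V)\cong\tilde\Omega_\bullet(\mc V)$.

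Next I would match the complexes and the remaining operators. As $\Lie{}_-R$ is abelian, the bracket terms in the Chevalley--Eilenberg differential \eqref{0301:eq1} vanish, and the surviving derivation terms coincide, under $u_{i,m}\leftrightarrow m!\,\partial/\partial u_i^{(m)}$, with the de Rham differential \eqref{0324:eq1}; hence $\check\Delta^\bullet(\Lie{}_-R,\mc V)\cong\tilde\Omega^\bullet(\mc V)$ as complexes, and one checks that the contraction and Lie derivative operators also correspond, so the two calculus structures are isomorphic. I would moreover verify that the actions of $\partial$ on chains and on cochains are intertwined: on $\Lie{}_-R$ one has $\partial(u_{i,m})=-m\,u_{i,m-1}$, which after the factorial rescaling matches $[\partial,\partial/\partial u_i^{(m)}]=-\partial/\partial u_i^{(m-1)}$ coming from \eqref{derivation} and \eqref{0324:eq2}. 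Consequently the reduced structures correspond as well, giving $\hat\Delta_\bullet(\Lie{}_-R,\mc V)^\partial\cong\tilde\Omega_\bullet(\mc V)^\partial=\Omega_\bullet(\mc V)$ and $\check\Delta^\bullet/\partial\check\Delta^\bullet\cong\tilde\Omega^\bullet/\partial\tilde\Omega^\bullet=\Omega^\bullet(\mc V)$.

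Finally, a comparison of coefficients shows that, under this identification, \eqref{0324:eq7} is \eqref{0322:eq1} and \eqref{0324:eq5} is \eqref{0322:eq2}: writing $\phi=\sum(n_1!\cdots n_k!)^{-1}P^{n_1\cdots n_k}x_1^{n_1}\cdots x_k^{n_k}$, the factor $(n_1!\cdots n_k!)^{-1}$ in \eqref{0322:eq1} is cancelled by the factorials produced by $u_{i,m}=m!\,\partial/\partial u_i^{(m)}$, and symmetrically for \eqref{0324:eq5} against \eqref{0322:eq2}. Composing the isomorphism of calculus structures just built with the morphism and reduced isomorphism supplied by Theorem~\ref{0322:th} then yields at once the morphism \eqref{0324:eq9} and the isomorphism $(\Omega_\bullet(\mc V),\Omega^\bullet(\mc V))\simeq(\bar C_\bullet(R,\mc V),\bar C^\bullet(R,\mc V))$ (here, $R$ being free, $\bar C_\bullet=C_\bullet$ and $\bar C^\bullet=C^\bullet$). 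I expect the main obstacle to be entirely the bookkeeping in the middle two steps: keeping the topological completions ($\hat\otimes$ and formal power series) and the factorial normalizations consistent, so that the Schouten bracket, both differentials and the contractions match exactly rather than merely up to a scalar or a sign. Alternatively, one can bypass Theorem~\ref{0322:th} and check directly the three conditions defining a morphism of calculus structures --- that $\Phi_\bullet$ preserves the product \eqref{0312:eq1} and the bracket \eqref{0312:eq12}, that $\Psi^\bullet$ commutes with $d$ through \eqref{0324:eq6}, and that the compatibility \eqref{0321:eq1} holds --- repeating the computations of the proof of Theorem~\ref{0322:th} in this concrete setting.
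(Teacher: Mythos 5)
Your proposal is correct, but it takes a genuinely different route from the paper's own proof. The paper proves Theorem \ref{0324:th} by direct computation in the concrete setting: it verifies that $\Phi_\bullet$ preserves the wedge product \eqref{0312:eq1} and the bracket \eqref{0312:eq12} (which, the $\lambda$-bracket being zero, reduces to the two derivation terms matching \eqref{0325:eq1}), that $\Psi^\bullet$ intertwines the differentials \eqref{100311:eq1} and \eqref{0324:eq1}, and that the contractions satisfy \eqref{0321:eq1}; the identification of images and kernels ($\im\Phi_k=\Omega_k(\mc V)$, $\ker\Psi^k=\partial\tilde\Omega^k(\mc V)$ via \eqref{0324:eq6}) is done before the theorem. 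You instead factor everything through Theorem \ref{0322:th}: you observe that for $R=\bigoplus_i\mb F[\partial]u_i$ with zero $\lambda$-bracket the annihilation algebra $\Lie_-R$ is abelian with topological basis $u_{i,m}$ acting as $m!\,\partial/\partial u_i^{(m)}$, so that $(\mc V\hat\otimes\Lie_-R,\mc V)\cong(\Vect(\mc V),\mc V)$ as Lie algebroids, and hence the topological calculus structure $(\hat\Delta_\bullet(\Lie_-R,\mc V),\check\Delta^\bullet(\Lie_-R,\mc V))$ is isomorphic to $(\tilde\Omega_\bullet(\mc V),\tilde\Omega^\bullet(\mc V))$, with \eqref{0322:eq1}, \eqref{0322:eq2} turning into \eqref{0324:eq7}, \eqref{0324:eq5} after the factorial cancellation; your checks of the $\partial$-actions and of the rescaled basis correspondence are correct, and since $R$ is free, $\bar C_\bullet=C_\bullet$ and $\bar C^\bullet=C^\bullet$, so the reduced isomorphism of Theorem \ref{0322:th} becomes exactly the claimed one. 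This is precisely the factorization the paper only sketches in Remarks \ref{0324:rem2} and \ref{0324:rem3} (through $(\tilde C_\bullet,\tilde C^\bullet)$) but does not use as its proof. What your route buys is conceptual economy --- the general theorem is invoked once, and the variational complex is exhibited transparently as a (reduced, topological) Chevalley--Eilenberg complex of an abelian linearly compact Lie algebra; what it costs is that the nontrivial bookkeeping does not disappear but moves into the identification itself: in particular the assertion that ``the contraction and Lie derivative operators also correspond'' is exactly the analogue of the paper's verification of \eqref{0321:eq1} and still has to be carried out (the Lie derivatives then come for free from Cartan's formula), and one must accept, as the paper implicitly does via \eqref{0323:eq2}, that $\tilde\Omega_\bullet(\mc V)$ is the completed symmetric algebra so that it matches $\hat\Delta_\bullet$. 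The paper's direct approach, by contrast, is self-contained and produces along the way the explicit formulas exploited in Section \ref{sec:5.3}.
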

\begin{proof}

We want to prove that $\Phi_\bullet$ is a homomorphism of Gerstenhaber algebras. Let 
$a=u_{i_1}\otimes\cdots\otimes u_{i_h}\otimes\phi(x_1,\cdots,x_h) \in C_h(R,\mc V)$, 
$b=u_{i_{h+1}}\otimes\cdots\otimes u_{i_k}\otimes\psi(x_1,\cdots,x_{k-h})\in C_{k-h}(R,\mc V)$ 
and let $P^{m_1\dots m_h}_{i_1 \dots i_h}$ and $Q^{n_1\dots n_{k-h}}_{i_1 \dots i_{k-h}}$ be the 
coefficients of $x_1^{m_1}\cdots x_h^{m_h}$ respectively $x_1^{n_1}\cdots x_{k-h}^{n_{k-h}}$ in the formal 
power series expansion of $\phi$ and $\psi$. By \eqref{0312:eq1}, the coefficient of 
$x_1^{m_1}\cdots x_k^{m_k}$ in $a\wedge b$ is
$$ 
\sum_{\sigma\in S_{k}}\frac{\text{sign}(\sigma)}{h!(k-h)!}P^{m_{\sigma(1)}\dots m_{\sigma(h)}}_{i_{\sigma(1)} \dots i_{\sigma(h)}}
Q^{m_{\sigma(h+1)}\dots m_{\sigma(k)}}_{i_{\sigma(h+1)} \dots i_{\sigma(k)}} \,,
$$
which, together with \eqref{0324:eq7}, proves that $\Phi_\bullet$ is an associative superalgebra
homomorphism. 

We are left to prove that the Gerstenhaber bracket is preserved by this map. Since the $\lambda$-bracket on $R$ 
is zero by assumption, the expression \eqref{0312:eq12} for the bracket $[a,b]$ reduces to
\begin{equation}\label{0418:eq1}
\begin{array}{l}
\displaystyle{
\sum_{\alpha=1}^h (-1)^{h+\alpha} u_{i_1}\otimes\stackrel{\alpha}{\check{\cdots}}\otimes u_{i_k}
\otimes \big( {u_{i_\alpha}}\,_{\partial_y}\psi(x_h,\dots,x_{k-1}) \big) 
\phi(x_1,\dots\stackrel{\alpha}{y}\dots,x_{h-1}) +
} \\
\displaystyle{
\sum_{\alpha=h+1}^k (-1)^{h+\alpha} u_{i_1}\otimes\stackrel{\alpha}{\check{\cdots}}\otimes u_{i_k}
\otimes \big( {u_{i_\alpha}}\,_{\partial_y}\phi(x_1,\dots,x_h) \big) 
\psi(x_{h+1},\dots\stackrel{\alpha}{y}\dots,x_{k-1}) \,,
}
\end{array}
\end{equation}
evaluated at $y=0$. Using formula \eqref{0325:eq2} for the $R$-module structure on $\mc V$ and expanding in formal power series, 
the first sum  in \eqref{0418:eq1} at $y=0$ is equal to
$$
\sum_{\alpha=1}^h (-1)^{h+\alpha} u_{i_1}\otimes\stackrel{\alpha}{\check{\cdots}}\otimes u_{i_k}\otimes \sum_{m_r\in \mb Z_+}
P^{m_1\dots m_h}_{i_1 \dots i_h}\frac{\partial Q^{m_{h+1}\dots m_{k}}_{i_{h+1} \dots i_k}}{\partial u^{(m_{\alpha})}_{i_\alpha}}$$
$$
x_1^{m_1}\cdots x_{\alpha-1}^{m_{\alpha-1}} x_\alpha^{m_{\alpha+1}}\cdots x_{k-1}^{m_k}\,,
$$
where we have used $\left(\frac{\partial}{\partial y}\right)^ny^{m_\alpha}|_{y=0}=m_\alpha!\delta_{n,m_\alpha}$. 
Similarily, the second sum in \eqref{0418:eq1} at $y=0$ is equal to
$$
\sum_{\alpha=h+1}^k (-1)^{h+\alpha} u_{i_1}\otimes\stackrel{\alpha}{\check{\cdots}}\otimes u_{i_k}\otimes\sum_{m_r\in \mb Z_+}
Q^{m_{h+1}\dots m_{k}}_{i_{h+1} \dots i_k}\frac{\partial P^{m_1\dots m_h}_{i_1 \dots i_h}}{\partial u^{(m_{\alpha})}_{i_\alpha}}
$$$$x_1^{m_1}\cdots x_{\alpha-1}^{m_{\alpha-1}}x_\alpha^{m_{\alpha+1}}\cdots x_{k-1}^{m_k}\,.
$$
The identity $\Phi_{k-1}([a,b])=[\Phi_h(a),\Phi_{k-h}(b)]$ follows by combining the above results with the formula \eqref{0325:eq1} and the definition 
\eqref{0324:eq7} of $\Phi_\bullet$.

Next, we prove that $\Psi^\bullet$ is a morphism of complexes. Let $\tilde \omega\in \tilde \Omega^k(\mc V)$ 
as in \eqref{0324:eq4}. Again, due to the triviality of the $\lambda$-bracket on $R$, 
the second term in \eqref{100311:eq1} vanishes. Recalling the $\lambda$-action \eqref{0325:eq2} of $R$ on $\mc V$ 
and \eqref{0324:eq5}, the coefficient of $\lambda_1^{m_1}\cdots\lambda_{k+1}^{m_{k+1}}$ in the polynomial 
$(d\Psi^k(\tilde\omega))_{\lambda_1,\cdots,\lambda_{k+1}}(u_{i_1},\cdots,u_{i_{k+1}})$ is
$$
\sum_{\alpha=1}^{k+1}(-1)^{\alpha+1}\frac{\partial \langle f\rangle^{m_1\stackrel{\alpha}{\check{\cdots}}m_{k+1}}
_{i_1\stackrel{\alpha}{\check{\cdots}}i_{k+1}}}{\partial u_{i_\alpha}^{(m_\alpha)}}\,.
$$
By \eqref{0324:eq1}, it follows that
$$
\big(\Psi^{k+1}(d\tilde\omega)\big)_{\lambda_1,\cdots,\lambda_{k+1}}(u_{i_1},\cdots,u_{i_{k+1}}) = 
\sum_{m_r\in\mb Z_+}
\lambda_1^{m_1}\cdots \lambda_{k+1}^{m_{k+1}} \left\langle \frac{\partial f^{m_2\dots m_{k+1}}_
{i_2 \dots i_{k+1}}}{\partial u^{(m_1)}_{i_1}}\right\rangle
$$
$$
= \sum_{m_r\in\mb Z_+}\lambda_1^{m_1}\cdots \lambda_{k+1}^{m_{k+1}}
\sum_{\alpha=1}^{k+1}(-1)^{\alpha+1}\frac{\partial \langle f\rangle^{m_1\stackrel{\alpha}{\check{\cdots}}m_{k+1}}
_{i_1\stackrel{\alpha}{\check{\cdots}}i_{k+1}}}{\partial u_{i_\alpha}^{(m_\alpha)}} \,,
$$
thus proving that $\Psi^{k+1}(d\tilde\omega)=d\Psi^k(\tilde\omega)$.

Finally, we show that $\Phi_\bullet$ and $\Psi^\bullet$ are compatible with the contraction operators. 
Let $a=u_{i_1}\otimes \cdots\otimes u_{i_h}\otimes \phi \in C_h(R,\mc V)$ and let $\tilde \omega \in \tilde \Omega^k(\mc V)$ as in above.
We want to prove that $\Psi^{k-h}(\iota_{\Phi_h(a)}(\tilde\omega))=\iota_a(\Psi^k(\tilde\omega))$. By \eqref{0304:eq2}, it follows
that the coefficient of $\lambda_{h+1}^{m_{h+1}}\cdots\lambda_k^{m_k}$ in $\Psi^{k-h}(\iota_{\Phi_h(a)}(\tilde\omega))
_{\lambda_{h+1},\cdots,\lambda_k}(u_{i_{h+1}},\dots,u_{i_k})$ is
$$
(-1)^{\frac{h(h-1)}{2}}\sum_{m_1,\dots, m_h\in\mb Z_+}P^{m_1\cdots m_h}_{i_1\cdots i_h}
\langle f\rangle^{m_1\cdots m_k}_{i_1\cdots i_k}\,.
$$
On the other hand, recalling \eqref{0319:eq1}, we have that
$$
\iota_a(\Psi^k(\tilde\omega))_{\lambda_{h+1},\cdots,\lambda_k}(u_{i_{h+1}},\dots,u_{i_k})=
$$
$$
=(-1)^{\frac{h(h-1)}{2}}\sum_{m_r\in\mb Z_+}\sum_{\substack{i_1,\cdots,i_h\in I \\ n_1,\cdots,n_h\in\mb Z_+}}P^{n_1\cdots n_h}_{i_1\cdots i_h}\frac{\partial_{\lambda_1}^{n_1}}{n_1!}\cdots\frac{\partial_{\lambda_h}^{n_h}}{n_h!}\langle f\rangle^{m_1\cdots m_k}_{i_1\cdots i_k} \lambda_1^{m_1}\cdots \lambda_k^{m_k}
$$
$$
= (-1)^{\frac{h(h-1)}{2}}\sum_{\substack{i_1,\cdots,i_h\in I \\ m_1,\cdots,m_k\in\mb Z_+}}P^{m_1\cdots m_h}_{i_1\cdots i_h}
\langle f\rangle^{m_1\cdots m_k}_{i_1\cdots i_k}\lambda_{h+1}^{m_{h+1}}\cdots\lambda_k^{m_k}\,,
$$
which completes the proof of the theorem. 
\flushright\qed
\end{proof}

\begin{remark}\label{0324:rem3}
Formulas \eqref{0324:eq7} and \eqref{0324:eq5}
define an isomorphism of calculus structures
$(\tilde\Omega_\bullet(\mc V),\tilde\Omega^\bullet(\mc V))
\simeq(\tilde C_\bullet(R,\mc V),\tilde C^\bullet(R,\mc V))$.
This isomorphism induces the morphism \eqref{0324:eq9}.
In other words, we have the following commutative diagram of calculus structures:
$$
\UseTips
\xymatrix{
(\tilde\Omega_\bullet(\mc V),\tilde\Omega^\bullet(\mc V)) \ar[d] \ar[r]^{\sim}
& (\tilde C_\bullet(R,\mc V),\tilde C^\bullet(R,\mc V)) \ar[d] \\
(\Omega_\bullet(\mc V),\Omega^\bullet(\mc V))  \ar[r]^{\sim}
& (C_\bullet(R,\mc V),C^\bullet(R,\mc V))\,.
}
$$
\end{remark}

\subsection{A description of the variational calculus structure}\label{sec:5.3}

Let $\mc V$ be an algebra of differential functions in the variables $u_i,\,i\in I$.
To every $k$-cochain $\tilde{\omega} \in \tilde{\Omega}$ we associate the linear map
$S_{\tilde{\omega}}:\,\tilde{\Omega}_k(\mc V) \to \mc V,\,
X \mapsto S_{\tilde{\omega}}(X)=(-1)^{k(k-1)/2}\iota_X (\tilde{\omega})$.
Explicitly, it is easy to see that for $\tilde\omega$ as in \eqref{0324:eq4}
and $X$ as in \eqref{0323:eq2}, we have
\begin{equation}\label{0325:eq5}
S_{\tilde\omega}(X)
=
\sum_{\substack{i_1,\cdots,i_k\in I \\ m_1,\cdots,m_k\in\mb Z_+}}
\langle f\rangle^{m_1\cdots m_k}_{i_1 \cdots i_k} P^{m_1\cdots m_k}_{i_1 \cdots i_k}\,,
\end{equation}
where $\langle f\rangle$ is the skewsymmetrization defined in \eqref{0325:eq4}.
\begin{lemma}\label{0325:lem}
For $\tilde\omega\in\tilde\Omega^k(\mc V)$ and $X\in\Omega_k(\mc V)\subset\tilde\Omega_k$,
we have $S_{\partial\tilde\omega}(X)\in\partial\mc V$.
\end{lemma}
\begin{proof}
We have, by definition,
$$
S_{\partial\tilde\omega}(X)
=(-1)^{k(k-1)/2}\iota_X(\partial\tilde\omega)
=(-1)^{k(k-1)/2}\big(\partial\iota_X(\tilde\omega)-\iota_{\partial(X)}(\tilde\omega)\big)\,.
$$
In the second identity we used equation \eqref{0325:eq6}.
To conclude we just notice that, by assumption, $\partial(X)=0$.
\flushright\qed
\end{proof}
By Lemma \ref{0325:lem}, for $\omega\in\Omega^k(\mc V)=\tilde\Omega^k(\mc V)/\partial\tilde\Omega^k(\mc V)$
we have the induced map: $\Omega_k(\mc V)\to\mc V/\partial\mc V$.
Recalling the isomorphism 
$\Psi^k:\,\Omega^k(\mc V)\stackrel{\sim}{\to} C^k(R,\mc V)$ 
defined in Theorem \ref{0324:th},
to every $c\in C^k(R,\mc V)$ we associate the induced map
$S_c:\,\Omega_k(\mc V)\to\mc V/\partial\mc V$.
Explicitly, if $c\in C^k(R,\mc V)$ is such that
\begin{equation}\label{0325:eq8}
c_{\lambda_1,\cdots,\lambda_k}(u_{i_1},\cdots,u_{i_k})
=
\sum_{m_1,\cdots,m_k\in\mb Z_+}
\lambda_1^{m_1}\cdots \lambda_k^{m_k} f^{m_1\cdots m_k}_{i_1 \cdots i_k}\,,
\end{equation}
and $X\in\Omega_k$ is as in \eqref{0323:eq2}, we have
\begin{equation}\label{0325:eq7}
S_c(X)
=
\sum_{\substack{i_1,\cdots,i_k\in I \\ m_1,\cdots,m_k\in\mb Z_+}}
\tint f^{m_1\cdots m_k}_{i_1 \cdots i_k} P^{m_1\cdots m_k}_{i_1 \cdots i_k}\,.
\end{equation}

In this section we assume that the algebra of differential functions $\mc V$ is \emph{non-degenerate},
in the sense that the pairing $\mc V\times\mc V\to\mc V/\partial\mc V$,
given by $(f,g)=\tint fg$, is non-degenerate.
By \cite[Lemma 10(c)]{DSK}, any differential algebra extension of the algebra of differential
polynomials in Example \ref{0323:ex1} is non-degenerate.

\begin{proposition}\label{0325:prop}
Suppose that $\mc V$ is a non-degenerate algebra of differential functions. Then
\begin{enumerate}[(i)]
\item if $c\in C^k(R,\mc V)$ is such that $S_c=0$, then $c=0$;
\item if $c\in C^k(R,\mc V)$ is such that 
$S_c(X_1\wedge\cdots\wedge X_k)=0$ for every $X_1,\dots,X_k\in\Omega_1(\mc V)$, then $c=0$.
\end{enumerate}
\end{proposition}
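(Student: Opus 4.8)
The plan is to prove (ii) first and then deduce (i) from it: every decomposable evolutionary polyvector field $X_1\wedge\cdots\wedge X_k$ with $X_j\in\Omega_1(\mc V)$ lies in $\Omega_k(\mc V)$ (the Gerstenhaber algebra $\Omega_\bullet$ is closed under $\wedge$ and $\partial X_j=0$ forces $\partial(X_1\wedge\cdots\wedge X_k)=0$), so the hypothesis of (i) is stronger than that of (ii) and (ii) implies (i). Hence I only ever test $S_c$ against products of evolutionary vector fields. Recall that an evolutionary vector field is $X_P=\sum_{i,n}(\partial^nP_i)\,\partial/\partial u_i^{(n)}$ with seed $P=(P_i)_{i\in I}$, $P_i\in\mc V$; these are exactly the degree-one solutions of \eqref{0324:eq3}. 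Writing $c$ as in \eqref{0325:eq8} with skewsymmetric coefficients $f^{\mathbf m}_{\mathbf i}$, expanding the wedge $X_{P^{(1)}}\wedge\cdots\wedge X_{P^{(k)}}$ in the basis of \eqref{0323:eq2} and feeding the result into \eqref{0325:eq7} gives, up to a nonzero constant,
$$
S_c\big(X_{P^{(1)}}\wedge\cdots\wedge X_{P^{(k)}}\big)
=\sum_{\mathbf i,\mathbf m}\tint f^{\mathbf m}_{\mathbf i}\prod_{j=1}^k\partial^{m_j}P^{(j)}_{i_j}\,,
$$
where the skewsymmetry of $f$ absorbs the antisymmetrization of the coefficients of the wedge.

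Next I specialize each seed to a single variable, $P^{(j)}_i=\delta_{i,c_j}\,g_j$ with $c_j\in I$ and $g_j\in\mc V$ arbitrary. Then the hypothesis of (ii) reads, for every multi-index $\mathbf c=(c_1,\dots,c_k)$,
$$
\sum_{\mathbf m}\tint f^{\mathbf m}_{\mathbf c}\prod_{j=1}^k\partial^{m_j}g_j=0\,,\qquad\forall\,g_1,\dots,g_k\in\mc V\,.
$$
By Remark \ref{100311:rem} the cochain $c$ is determined by its poly-$\lambda$-brackets $\{u_{c_1\lambda_1}\cdots u_{c_{k-1}\lambda_{k-1}}u_{c_k}\}_c$, so it suffices to prove that for each $\mathbf c$ the polynomial $\Pi_{\mathbf c}(\lambda):=\sum_{\mathbf m}\lambda_1^{m_1}\cdots\lambda_{k-1}^{m_{k-1}}(\lambda_k^\dagger)^{m_k}f^{\mathbf m}_{\mathbf c}$ vanishes, where $\lambda_k^\dagger=-\lambda_1-\cdots-\lambda_{k-1}-\partial$.

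The core step is to integrate by parts in the last factor $g_k$, moving $\partial^{m_k}$ onto the remaining product; since $\tint\partial(\cdots)=0$, the displayed integral identity becomes $\tint\big[\sum_{\mathbf m}(-\partial)^{m_k}\big(f^{\mathbf m}_{\mathbf c}\prod_{j<k}\partial^{m_j}g_j\big)\big]g_k=0$ for all $g_k$. This is where the \emph{non-degeneracy} of $\mc V$ enters, yielding the pointwise identity
$$
\sum_{\mathbf m}(-\partial)^{m_k}\Big(f^{\mathbf m}_{\mathbf c}\prod_{j<k}\partial^{m_j}g_j\Big)=0\,,\qquad\forall\,g_1,\dots,g_{k-1}\in\mc V\,.
$$
Expanding $(-\partial)^{m_k}$ by the Leibniz rule collects the left-hand side as $\sum_{\mathbf b}C_{\mathbf b}\prod_{j<k}\partial^{b_j}g_j$ with coefficients $C_{\mathbf b}\in\mc V$. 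A poly-differential operator with left coefficients that annihilates all $g_j\in\mc V$ must vanish identically (test on $g_j=u_{l_j}^{(N_j)}$ with the $N_j$ large and compare the top-order derivatives, which the fixed coefficients $C_{\mathbf b}$ cannot involve), so $C_{\mathbf b}=0$ for every $\mathbf b$. A multinomial computation then identifies $\sum_{\mathbf b}C_{\mathbf b}\lambda^{\mathbf b}$ with $\Pi_{\mathbf c}(\lambda)$, whence $\Pi_{\mathbf c}=0$; ranging over all $\mathbf c$ gives $c=0$, and part (i) follows.

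The main obstacle is the bookkeeping in the last step: one must verify that the Leibniz expansion of $\sum_{\mathbf m}(-\partial)^{m_k}\big(f^{\mathbf m}_{\mathbf c}\prod_{j<k}\partial^{m_j}g_j\big)$ reassembles, coefficient by coefficient in the auxiliary variables $\lambda_j$, into the substitution $\lambda_k\mapsto\lambda_k^\dagger$ defining the poly-$\lambda$-bracket; concretely, the multinomial coefficients $\binom{m_k}{a_0,\dots,a_{k-1}}$ produced by Leibniz must match those in the expansion of $(-\lambda_1-\cdots-\lambda_{k-1}-\partial)^{m_k}$, which is the one genuinely delicate computation. The two structural inputs are each used exactly once: non-degeneracy of $\mc V$ to pass from the integral identity to a pointwise one, and the vanishing of a nonzero poly-differential operator on $\mc V$ to kill the individual coefficients $C_{\mathbf b}$.
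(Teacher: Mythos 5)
Your proposal is correct and takes essentially the same route as the paper's proof: you prove (ii), which subsumes (i), by evaluating $S_c$ on decomposable evolutionary polyvector fields, integrating by parts in the last slot, invoking non-degeneracy of the pairing to obtain a pointwise polydifferential identity, and then identifying its symbol with the poly-$\lambda$-bracket $\{{u_{i_1}}_{\lambda_1}\cdots{u_{i_{k-1}}}_{\lambda_{k-1}}u_{i_k}\}_c$ of Remark \ref{100311:rem}, forcing it to vanish. The only differences are presentational: you specialize the characteristics to single components $P^{(j)}_i=\delta_{i,c_j}g_j$, and you spell out (by testing on $u_{l}^{(N)}$ with $N$ large) the step the paper compresses into the word ``Equivalently'', namely that a polydifferential operator with coefficients in $\mc V$ annihilating all arguments has vanishing coefficients.
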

\begin{proof}
Obviously $(i)$ implies $(ii)$. 
Suppose then that $c$ in \eqref{0325:eq8} satisfies the assumption in $(ii)$.
We have, letting 
$X_\alpha=\sum_{i\in I, n\in \mb Z_+} (\partial^n P^\alpha_i) 
\frac{\partial}{\partial u^{(n)}_i}\in\Omega_1,\,\alpha=1,\dots,k$,
$$
\sum_{\substack{i_1,\cdots, i_k \in I \\ m_1,\cdots,m_k \in \mb Z_+}} 
f^{m_1\cdots m_k}_{i_1\cdots i_k}
(\partial^{m_1}P^1_{i_1})\cdots(\partial^{m_{k}}P^{k}_{i_{k}})
\big) = 0\,,
$$
for every of $P^1,\dots,P^k\in\mc V^\ell$.
Integrating by parts, and using the nondegeneracy of the pairing $\mc V\times\mc V\to\mc V/\partial\mc V$,\
we get
$$
\sum_{\substack{i_1,\cdots, i_k \in I \\ m_1,\cdots,m_k \in \mb Z_+}} (-\partial)^{m_k}
\big(
f^{m_1\cdots m_k}_{i_1\cdots i_k}
(\partial^{m_1}P^1_{i_1})\cdots(\partial^{m_{k-1}}P^{k-1}_{i_{k-1}})
\big) = 0
$$
for every $P^1,\cdots,P^{k-1}\in\mc V^\ell$.
Equivalently, we have that
$$
\sum_{\substack{i_1,\cdots, i_k \in I \\ m_1,\cdots,m_k \in \mb Z_+}} 
(-\partial-\lambda_1-\cdots-\lambda_{k-1})^{m_k}
f^{m_1\cdots m_k}_{i_1\cdots i_k}
\lambda_1^{m_1}\cdots\lambda_{k-1}^{m_{k-1}} = 0\,,
$$
as an element of $\mc V[\lambda_1,\dots,\lambda_{k-1}]$.
In other words,
the corresponding $k$-$\lambda$-bracket 
$\{{u_{i_1}}_{\lambda_1}\cdots {u_{i_{k-1}}}_{\lambda_{k-1}}u_{i_k}\}_c$
(see Remark \ref{100311:rem}) is zero.
\flushright\qed
\end{proof}

Thanks to Proposition \ref{0325:prop} 
we can and we will identify the space of $k$-cochains $C^k(R,\mc V)$
with the space of \emph{skewsymmetric local $k$-operators},
namely the maps $S:\,\Omega_k\to\mc V/\partial\mc V$ of the form
\begin{equation}\label{skewdiffop}
S(X) 
= 
\sum_{\substack{i_1,\cdots, i_k \in I \\ m_1,\cdots,m_k \in \mb Z_+}} 
\tint f^{m_1\cdots m_k}_{i_1\cdots i_k} P^{m_1\cdots m_k}_{i_1\cdots i_k}\,,
\end{equation}
where $f^{m_1\cdots m_k}_{i_1\cdots i_k}\in\mc V$ 
are skewsymmetric under simultaneous permutations of upper and lower indices,
and all but finitely many of them are zero,
and $X=\sum  P^{n_1\cdots n_k}_{j_1\cdots j_k}\frac{\partial}{\partial u_{j_1}^{(n_1)}}\wedge 
\cdots \wedge \frac{\partial}{\partial u_{j_k}^{(n_k)}} \in \Omega_k(\mathcal V)$ 
is an evolutionary $k$-vector field.

Next, we will see how the calculus structure on $C^\bullet(R,\mc V)$
translates under this identification.

It is not hard to check, by direct computation, that
the definition \eqref{100311:eq1} of the differential $d:\, C^k(R,\mc V)\to C^{k+1}(R,\mc V)$
gives rise to the following map on skewsymmetric local operators.
Let $S:\,\Omega_k(\mc V)\to\mc V/\partial\mc V$ be as in \eqref{skewdiffop} and 
consider the evolutionary $k+1$-vector field 
$X=\sum  P^{n_1\cdots n_{k+1}}_{j_1\cdots j_{k+1}}\frac{\partial}{\partial u_{j_1}^{(n_1)}}\wedge 
\cdots \wedge \frac{\partial}{\partial u_{j_{k+1}}^{(n_{k+1})}} \in \Omega_{k+1}(\mathcal V)$.
We have
\begin{equation}\label{0325:eq9}
(dS)(X)
=
\sum_{\substack{i_1,\cdots, i_{k+1} \in I \\ m_1,\cdots,m_{k+1} \in \mb Z_+}} 
\sum_{\alpha=1}^{k+1}(-1)^{\alpha+1} \int
\Big(
\frac{\partial}{\partial u_{i_\alpha}^{(m_\alpha)}}
f^{m_1\stackrel{\alpha}{\check{\cdots}} m_{k+1}}_{i_1\stackrel{\alpha}{\check{\cdots}} i_{k+1}}
\Big)
P^{m_1\cdots m_{k+1}}_{i_1\cdots i_{k+1}}\,.
\end{equation}
In particular, if $X=X_1\wedge\cdots\wedge X_{k+1}$, with $X_i\in\Omega_1(\mc V)$,
we recover formula (166) from \cite{DSK}:
$$
(dS)(X)
=
\sum_{\alpha=1}^{k+1}(-1)^{\alpha+1} \int
\big( X_\alpha S \big)(X_1\wedge\stackrel{\alpha}{\check{\cdots}}\wedge X_{k+1})\,,
$$
where $X_\alpha S$ means that $X_\alpha$ acts on the coefficients of $S$.

Next, we see how the contraction operators $\iota_X,\,X\in\Omega_h(\mc V)$, act
on skewsymmetric local $k$-operators.
The action of $\iota_X$ on a skewsymmetric local $k$-operator 
$S:\,\Omega_k(\mc V)\to\mc V/\partial\mc V$
is induced by its action on $\tilde\Omega^k(\mc V)$
via the map $\tilde\omega\mapsto S_{\tilde\omega}$ defined 
at the beginning of the section.
It follows that, for $X\in\Omega_h(\mc V)$ and $Y\in\Omega_{k-h}(\mc V)$,
we must have $(\iota_X S)(Y)=(-1)^{h(h-1)/2}S(X\wedge Y)$.
Here we used equation \eqref{0319:eq3}.
Explicitly,
if $S:\,\Omega_k(\mc V)\to\mc V/\partial\mc V$ is as in \eqref{skewdiffop},
$X=\sum  P^{n_1\cdots n_h}_{j_1\cdots j_h}\frac{\partial}{\partial u_{j_1}^{(n_1)}}\wedge 
\cdots \wedge \frac{\partial}{\partial u_{j_h}^{(n_h)}}$,
and
$Y=\sum  Q^{n_{h+1}\cdots n_k}_{j_{h+1}\cdots j_k}\frac{\partial}{\partial u_{j_{h+1}}^{(n_{h+1})}}\wedge 
\cdots \wedge \frac{\partial}{\partial u_{j_k}^{(n_k)}}$,
we have
\begin{equation}\label{0325:eq10}
(\iota_X S)(Y)
= 
(-1)^{\frac{h(h-1)}2}
\!\!\!\!\!\!\!\!\!
\sum_{\substack{i_1,\cdots, i_k \in I \\ m_1,\cdots,m_k \in \mb Z_+}} 
\tint f^{m_1\cdots m_k}_{i_1\cdots i_k} 
P^{m_1\cdots m_h}_{i_1\cdots i_h} Q^{m_{h+1}\cdots m_k}_{i_{h+1}\cdots i_k}\,.
\end{equation}

Combining formulas \eqref{0325:eq9} and \eqref{0325:eq10}
we get, by Cartan's formula, an explicit expression
for the Lie derivative $L_XS:\,\Omega_{k-h+1}\to\mc V/\partial\mc V$.
For $S$ and $X$ as before, and 
$Y=\sum  Q^{n_{h+1}\cdots n_{k+1}}_{j_{h+1}\cdots j_{k+1}}
\frac{\partial}{\partial u_{j_{h+1}}^{(n_{h+1})}}\wedge 
\cdots \wedge \frac{\partial}{\partial u_{j_{k+1}}^{(n_{k+1})}}\in\Omega_{k-h+1}(\mc V)$,
we have
\begin{equation}\label{0325:eq11}
\begin{array}{c}
(L_XS)(Y)
=
\displaystyle{
(-1)^{\frac{h(h-1)}2}
\!\!\!\!\!\!\!\!\!\!
\sum_{\substack{i_1,\dots, i_{k+1} \in I \\ m_1,\dots,m_{k+1} \in \mb Z_+}} 
\!\!\!\!
\int\Bigg(
\sum_{\alpha=1}^h (-1)^{\alpha+1}
\frac{
\partial f^{m_1\stackrel{\alpha}{\check{\cdots}} m_{k+1}}_{i_1\stackrel{\alpha}{\check{\cdots}} i_{k+1}}
}{\partial u_{i_\alpha}^{(m_\alpha)}}
P^{m_1\dots m_{h}}_{i_1\dots i_{h}} 
} \\
\displaystyle{
\,\,\,\,\,\,\,\,\,\,\,\,\,\,\,\,\,\,\,\,\,\,\,\,\,\,\,\,\,\,\,\,\,\,\,\,
+\sum_{\alpha=h+1}^{k+1} (-1)^{\alpha}
f^{m_1\stackrel{\alpha}{\check{\cdots}} m_{k+1}}_{i_1\stackrel{\alpha}{\check{\cdots}}i_{k+1}}
\frac{\partial P^{m_1\dots m_{h}}_{i_1\dots i_{h}}}{\partial u_{i_\alpha}^{(m_\alpha)}}
\Bigg)Q^{m_{h+1}\dots m_{k+1}}_{i_{h+1}\dots i_{k+1}}\,.
}
\end{array}
\end{equation}
In particular, if $X=X_1\wedge\cdots\wedge X_h,\,Y=X_{h+1}\wedge\cdots\wedge X_{k+1}$, 
with $X_i\in\Omega_1(\mc V)$,
equation \eqref{0325:eq11} becomes
$$
\begin{array}{c}
(L_XS)(Y)
=
\displaystyle{
(-1)^{\frac{h(h-1)}2}
\Big(
\sum_{\alpha=1}^h (-1)^{\alpha+1}
\big(X_\alpha S\big)(X_1\wedge\stackrel{\alpha}{\check{\cdots}}\wedge X_{k+1})
} \\
\displaystyle{
+\sum_{\alpha=1}^h\sum_{\beta=h+1}^{k+1} (-1)^{\beta}
S(X_1\wedge\cdots X_\beta(X_\alpha)\stackrel{\beta}{\check{\cdots}}\wedge X_{k+1})
\Big)\,,
}
\end{array}
$$
where, as before, $X S$ with $X\in\Omega_1(\mc V)$, means that $X$ acts on the coefficients of $S$,
and $Y(X)$, with $X,Y\in\Omega_1(\mc V)$, means that $Y$ acts on the coefficients of $X$.
(In terms of characteristics, $Y(X_P)=X_{Y(P)}$ \cite{DSK}).
In the special case $h=1$ we recover formula (175) in \cite{DSK}
(there is a typo there in the second term of the RHS):
$$
(L_XS)(X_1\wedge\cdots\wedge X_k)
=
\big(X S\big)(X_1\wedge\cdots\wedge X_k)
+\sum_{\beta=1}^{k}
S(X_1\wedge\cdots X_\beta(X)\cdots\wedge X_{k+1})
\,.
$$




\begin{thebibliography}{}


\bibitem[BDAK]{BDAK}
B.~Bakalov, A.~D'Andrea, and V.G.~Kac,
\emph{Theory of finite pseudoalgebras}, 
Adv. Math. \textbf{162}, (2001) 1--140. 

\bibitem[BKV]{BKV}
B.~Bakalov, V.G.~Kac, and A.A.~Voronov,
\emph{Cohomology of conformal algebras}, 
Commun. Math. Phys. \textbf{200}, (1999) 561--598. 

\bibitem[BDSK]{BDSK}
A.~Barakat, A.~De Sole, and V.G.~Kac,
\emph{Poisson vertex algebras in the theory of Hamiltonian equations}, 
Japan. J. Math. \textbf{4}, (2009) 141-252. 

\bibitem[CK]{CK}
N. ~Cantarini, and V.G. ~Kac,
\emph{Classification of linearly compact simple Jordan and generalized Poisson superalgebras},
J. Algebra \textbf{313}, (2007) 100-124.

\bibitem[DSK]{DSK}
A.~De Sole, and V.G.~Kac,
\emph{Lie conformal algebra cohomology and the variational complex}, 
Commun. Math. Phys. \textbf{292}, (2009) 667-719. 

\bibitem[DTT]{DTT}
V.~Dolgushev, D.~Tamarkin, and B.~Tsygan,
\emph{Formality of the homotopy calculus algebra of Hochschild (co)chains},
preprint arXiv:0807.5117.

\bibitem[F]{F} D.B.~Fuchs, 
\emph{Cohomology of infinite dimensional Lie algebras}
Contemporary Soviet Mathematics. Consultants Bureau, New York, 1986.

\bibitem[GD]{GD} I.M.~Gelfand, and I.Ya.~Dorfman,
\emph{Hamiltonian operators and the classical Yang-Baxter equation}
Funct. Anal. Appl. \textbf{16} (1982) 1-9.

\bibitem[G]{G}
V.W.~Guillemin,
\emph{A Jordan-Holder decomposition for a certain class of infinite-dimensional Lie algebras},
J. Differential Geom. \textbf{2} (1968) 313-345.

\bibitem[K]{K}
V.G.~Kac,
\emph{Vertex algebras for beginners},
Univ. Lecture Ser., vol 10, AMS, 1996. Second edition, 1998.

\end{thebibliography}
\end{document}